\theoremstyle{plain}
\newtheorem{theorem}{{Theorem}}[subsection]
\newtheorem{lemma}[theorem]{Lemma}
\newtheorem{proposition}[theorem]{Proposition}
\newtheorem{notation}[theorem]{Notation}
\newtheorem{remark}[theorem]{Remark}
\newtheorem{definition}[theorem]{Definition}
\newtheorem{question}{Question}
\newtheorem{conjecture}{Conjecture}
\numberwithin{equation}{section}
\numberwithin{theorem}{subsection}
\newcommand*\colvec[1]{
	\global\colveccount#1
	\begin{pmatrix}
		\colvecnext
	}
	\def\colvecnext#1{
		#1
		\global\advance\colveccount-1
		\ifnum\colveccount>0
		\\
		\expandafter\colvecnext
		\else
	\end{pmatrix}
	\fi
}
\newcommand{\sL}{\mathsf{L}}
\newcommand{\hit}{\mathsf{Hit}}
\newcommand{\ano}{\mathsf{Ano}}
\newcommand{\Spec}{\mathsf{Spec}}
\newcommand{\sA}{\mathsf{A}}
\newcommand{\sP}{\mathsf{P}}
\newcommand{\sK}{\mathsf{K}}
\newcommand{\sQ}{\mathsf{Q}}
\newcommand{\sT}{\mathsf{T}}
\newcommand{\sG}{\mathsf{G}}
\newcommand{\sV}{\mathsf{V}}
\newcommand{\sW}{\mathsf{W}}
\newcommand{\sF}{\mathsf{F}}
\newcommand{\sM}{\mathsf{M}}
\newcommand{\sN}{\mathsf{N}}
\newcommand{\asQ}{\mathsf{AQ}}
\newcommand{\asP}{\mathsf{AP}}
\newcommand{\SL}{\mathsf{SL}}
\newcommand{\ASL}{\mathsf{ASL}}
\newcommand{\AG}{\mathsf{AG}_\tR}
\newcommand{\cA}{\mathcal{A}}
\newcommand{\cX}{\mathcal{X}}
\newcommand{\cY}{\mathcal{Y}}
\newcommand{\cV}{\mathcal{V}}
\newcommand{\acV}{\mathcal{AV}}
\newcommand{\tAd}{\mathtt{Ad}}
\newcommand{\tad}{\mathtt{ad}}
\newcommand{\tC}{\mathtt{C}}
\newcommand{\tB}{\mathtt{B}}
\newcommand{\tR}{\mathtt{R}}
\newcommand{\texp}{\mathtt{exp}}
\newcommand{\tJd}{\mathtt{Jd}}
\newcommand{\tM}{\mathtt{M}}
\newcommand{\tth}{\mathtt{h}}
\newcommand{\te}{\mathtt{e}}
\newcommand{\tu}{\mathtt{u}}
\newcommand{\N}{\mathbb{N}}
\newcommand{\R}{\mathbb{R}}
\newcommand{\sHom}{\mathsf{Hom}}
\newcommand{\defeq}{\mathrel{\mathop:}=}
\newcommand{\flow}{\mathsf{U}\Gamma}
\newcommand{\cflow}{\widetilde{\mathsf{U}_0\Gamma}}
\newcommand{\bdry}{\partial_\infty\Gamma}
\newcommand{\fg}{\mathfrak{g}}
\newcommand{\fa}{\mathfrak{a}}
\newcommand{\fk}{\mathfrak{k}}
\newcommand{\fp}{\mathfrak{p}}
\newcommand{\fn}{\mathfrak{n}}
\newcommand{\fm}{\mathfrak{m}}
\newcommand{\fsl}{\mathfrak{sl}}
\title[Proper actions]{Deformation of Fuchsian representations and proper affine actions}
\author{Sourav Ghosh}
\address{Ashoka University, Rajiv Gandhi Education City, Rai, Sonipat, Haryana 131029, India}
\email{sourav.ghosh@ashoka.edu.in, sourav.ghosh.bagui@gmail.com}
\date{\today}
\thanks{The author acknowledge support from Ashoka University annual research grant.}
\begin{document}
	
	\begin{abstract}
		The main goal of this article is to generalize Mess' work and using results from Labourie--Wentworth, Potrie--Sambarino and Smilga, to show that inside Hitchin representations, infinitesimal deformations of Fuchsian representations of a cocompact surface group do not act properly along the directions corresponding to the sum of a mixed odd differential and a $2m$-differential for any $1\leq m \leq \lfloor\frac{n}{2}\rfloor$. 
		
		In the process, we introduce affine versions of cross ratios and triple ratios. We introduce Margulis invariants and relate them with affine crossratios and infinitesimal Jordan projections. We obtain a general equivalent criterion for existence of proper affine actions in terms of the structure of the Margulis invariant spectra. Also, using a stability argument we show the existence of proper affine actions of non-abelian free groups whose linear part is a Hitchin representation. 
	\end{abstract}
	
	\maketitle
	\tableofcontents
	
	\newpage
	
	\section{Introduction}
	
	\subsection{Background and main results}
	
	The goal of this article is to study deformations of Fuchsian representations in the space of Hitchin representations and relate it to questions regarding proper affine actions of surface groups. Following the celebrated Bieberbach theorems \cite{B1,B2}, Auslander attempted to classify all cocompact proper affine actions \cite{Aus}. Auslander's failed attempt was later rechristened by Fried--Goldman \cite{FG} as the Auslander conjecture which states that
	\begin{conjecture}
		The fundamental group of a compact complete affine manifold is virtually polycyclic.
	\end{conjecture}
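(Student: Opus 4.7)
The plan is to attack the conjecture through the Tits dichotomy applied to the linear holonomy, handling each branch with the tools developed in this paper. Let $M=\A^n/\Gamma$ be a compact complete affine manifold, so that $\Gamma$ acts freely, properly discontinuously, and cocompactly on $\A^n$. Denote the linear part homomorphism by $L\colon \Gamma\to\mathsf{GL}(n,\R)$ and set $\Gamma_L\defeq L(\Gamma)$. By the Tits alternative, either $\Gamma_L$ is virtually solvable, or it contains a non-abelian free subgroup $F_2$.

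In the solvable case, I would pass to a finite-index subgroup whose Zariski closure in the affine group is connected and solvable, invoke Selberg's lemma to remove torsion, and apply the Auslander--Milnor structure theory for discrete subgroups of solvable affine Lie groups to conclude that $\Gamma$ is virtually polycyclic. The technical point here is to verify that cocompactness of the $\Gamma$-action descends to cocompactness of the associated syndetic hull inside its Zariski closure, so that the resulting solvmanifold structure forces polycyclicity.

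In the non-solvable case, the strategy is to lift $F_2$ to $\tilde F_2\subset\Gamma$ (reducing to the case of finite $\ker L$ by a dimension induction using the quotient action by the translation part of the unipotent radical of the Zariski closure), and then to apply the equivalent criterion obtained earlier in this paper: for a proper affine action, the Margulis invariant spectrum of $\tilde F_2$ must sit strictly on one side of zero. The goal is then to contradict this using the cocompactness hypothesis, which should impose volume and growth constraints incompatible with a one-sided Margulis spectrum for a non-abelian free subgroup.

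The main obstacle is precisely this last step. Producing loxodromic elements of $\tilde F_2$ whose Margulis invariants straddle zero, in the presence of a possibly higher-rank reductive Zariski closure of $\Gamma_L$, requires a delicate higher-rank ping-pong construction in the spirit of Abels--Margulis--Soifer and Smilga, and has so far been carried out only in restricted settings (low dimensions, or small Zariski closures such as $\mathsf{SO}(n,1)$). The proposal above is therefore a framework rather than a complete proof; the genuinely hard content is in adapting the Margulis-invariant sign-changing constructions to the full generality of cocompact affine crystallographic groups, which is exactly why the Auslander conjecture remains open in general.
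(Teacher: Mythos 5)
There is no proof to compare against: the statement you were asked to prove is the Auslander conjecture itself, which the paper explicitly records as open (``This conjecture still remains unproved in the general case'') and never attempts to prove. Your proposal is therefore being measured against nothing, and on its own terms it is not a proof --- you say so yourself in the last paragraph. The solvable branch of your Tits dichotomy is indeed essentially known (Fried--Goldman, building on Auslander and Milnor), but the non-solvable branch is exactly the open problem, and the specific tools you propose to import from this paper do not apply there. Theorem \ref{thm.proper} requires the linear part to lie in $\ano(\Gamma,\sG,\tR)$ for a split semisimple $\sG$ and a non-swinging representation $\tR$ admitting zero as a weight; an arbitrary free subgroup $\tilde F_2$ of the holonomy of a compact complete affine manifold comes with none of these hypotheses, and there is no mechanism in the paper for producing them. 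Worse, the criterion you invoke is a criterion for \emph{properness}, and Margulis's examples show that non-abelian free groups \emph{do} admit proper affine actions with one-sided Margulis spectrum; so properness of the $\tilde F_2$-action yields no contradiction by itself. Any contradiction must use cocompactness of the full group $\Gamma$ in an essential way, and neither the paper nor your proposal contains an argument converting cocompactness into a sign change of Margulis invariants along $\tilde F_2$. The paper's actual non-properness results (Theorem \ref{thm.main}, Propositions \ref{prop.zerospec} and \ref{prop.hitzerospec}) concern cocompact surface groups with Fuchsian linear part deformed in $k$-differential directions, which is a disjoint setting.

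Concretely, then, the gap is the entire second branch: (i) you cannot place $\tilde F_2$ into the Anosov framework needed to even define and control the Margulis invariant spectrum in the sense of this paper; (ii) even granting that, a one-sided spectrum is consistent with a proper action of $\tilde F_2$, so you would need to show that the ambient cocompact action forces $0\in\tM\text{-}\Spec$, which is precisely the unknown step; and (iii) the reduction ``lift $F_2$ to $\tilde F_2\subset\Gamma$ by dimension induction on the unipotent radical'' is itself nontrivial and is where the known low-dimensional proofs (Fried--Goldman, Abels--Margulis--Soifer, Tomanov) expend most of their effort. Your framework is a reasonable summary of how the known partial results are organized, but it is not a proof, and it should not be presented as one.
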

	This conjecture still remains unproved in the general case but in lower dimensions (see Fried--Goldman \cite{FG}, Abels--Margulis--Soifer \cite{AMS2}, Tomanov \cite{Tom}) results confirming the conjecture has been attained. Recently, an excellent survey of this topic has appeared in Danciger--Drumm--Goldman--Smilga \cite{DDGS}. Later, Milnor \cite{Milnor} asked if one could disprove the weakened Aulander conjecture by dropping the compactness assumption and Margulis \cite{Margulis1,Margulis2} in a celebrated work constructed examples of non-abelian free groups $\Gamma$ acting properly on $\R^3$ via affine actions. In order to show properness of the action Margulis introduced certain invariants $\tM(\gamma)$, called the \textit{Margulis invariants}, for all $\gamma\in\Gamma$. The Margulis invariant of $\gamma$ is defined as the standardised projection of the translational part of $\gamma$ on the unit eigenspace of $\gamma$ (see Definition \ref{def.marginv} for more details). 
	One can reformulate Margulis' results using isomorphisms of low dimensional Lie groups, in the following two ways:
	\begin{enumerate}
		\item There exists non-abelian free groups $\Gamma\subset\mathsf{SO}_{2,1}\ltimes\R^3$ which act properly on $\R^3$.
		\item There exists non-abelian free groups $\Gamma\subset\SL_2(\R)\ltimes\fsl_2(\R)$ which act properly on $\fsl_2(\R)$. 
	\end{enumerate}
	Subsequently, Abels--Margulis--Soifer \cite{AMS} constructed free subgroups of $\mathsf{SO}_{2n,2n-1}\ltimes\R^{4n-1}$ which act properly on $\R^{4n-1}$. Moreover, they showed that there does not exist any free subgroup of $\mathsf{SO}_{2n+1,2n}\ltimes\R^{4n+1}$ which act properly on $\R^{4n+1}$. These results can be seen as generalizations of the first reformulation of Margulis' work. Recently, generalizations of the second reformulation of Margulis' work was obtained by Smilga \cite{Smilga, Smilga3, Smilga4} and Danciger--Gu\' eritaud--Kassel \cite{DGK3}. When $\sG$ is a noncompact semisimple real Lie group, Smilga \cite{Smilga} constructed free subgroups of $\sG\ltimes\fg$ which act properly on $\fg$. These results were further generalized by Smilga \cite{Smilga3,Smilga4} to construct free subgroups of $\sG\ltimes_\tR\sV$ which act properly on $\sV$, under some conditions on the representation $\tR$ via which the noncompact semsimple Lie group $\sG$ acts on $\sV$. In particular the respresentation must admit zero as a weight. The linear parts of all these examples contain no parabolic elements, although Drumm \cite{Drumm2} showed that one could construct examples of free subgroups of $\mathsf{SO}_{2,1}\ltimes\R^3$ which act properly on $\R^3$ and whose linear part contains parabolic elements. All these constructions relied on a suitable generalization of the Margulis invariants. In general, roughly speaking the Margulis invariant of an element $(g,Y)\in\sG\ltimes_\tR\sV$ is the standardized projection of $Y$ on the unit eigenspace of $g$ (see Definition \ref{def.marginv} for more details). In this article, we introduce affine versions of cross ratios and relate them with Margulis invariants. Suppose $(g,Y)\in\sG\ltimes_\tR\sV$ be such that its action on the space of affine parabolic subspaces (see Definitions \ref{def.linpara} and \ref{def.affpara}) has an attracting fixed point $\sA_+$, a repelling fixed point $\sA_-$ and $\sA_\pm$ are transverse to each other. Moreover, suppose $\sA$ is another affine parabolic space which is transverse to both $A_\pm$ and $(g,Y)\sA$ is transverse to $\sA$. We denote the \textit{affine cross ratio} of the four mutually transverse affine parabolic subspaces by $\beta(\sA_+,\sA_-,(g,Y) \sA, \sA)$ (see Definition \ref{def.affcr} for more details). We generalize results from Ghosh \cite{Ghosh3} and prove that:
	
	\begin{theorem}[see Theorem \ref{thm.alphabeta}]
		Suppose $(g,Y)$ is in $\sG\ltimes_\tR\sV$ and $\sA_\pm$, $\sA$ are as above. Then the affine cross ratio $\beta$ and the Margulis invariant $\tM$ satisfy the following relation:
		\[\beta(\sA_+,\sA_-,(g,Y) \sA, \sA)=\tM(g,Y)+\tM((g,Y)^{-1}).\]
	\end{theorem}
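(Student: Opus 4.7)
The plan is to reduce the identity to an explicit computation in linear coordinates adapted to $\tR(g)$, then decompose the resulting expression and recognise the two summands as Margulis invariants.

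The first step is to set up a normal form. Decompose $\sV = \sV_+ \oplus \sV_0 \oplus \sV_-$, where $\sV_0$ is the sum of generalised eigenspaces of $\tR(g)$ whose eigenvalues have modulus one, and $\sV_\pm$ collect those with modulus greater (respectively less) than one. The hypothesis that $\sA_+$ is attracting and $\sA_-$ is repelling for $(g,Y)$ acting on the space of affine parabolic subspaces forces the linear parts of $\sA_\pm$ to be $\sV_0 \oplus \sV_\pm$, these being the only mutually transverse $\tR(g)$-invariant parabolic subspaces. The invariance equations $(g,Y) \sA_\pm = \sA_\pm$ then determine the translational parts $v_\pm \in \sV_\mp$ uniquely, obtained by applying $(\tR(g)|_{\sV_\mp} - \mathrm{Id})^{-1}$ to the corresponding components of $Y$; this inverse exists by the spectral gap between $\sV_\pm$ and $\sV_0$.

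Second, using transversality, I parametrise $\sA$ as an affine subspace with linear part $\sV_0$ passing through a point $w \in \sV_+ \oplus \sV_-$. Then $(g,Y)\sA$ has the same linear part and passes through $\tR(g) w + Y$. Substituting these data into the definition of $\beta(\sA_+, \sA_-, (g,Y)\sA, \sA)$, which by design measures the $\sV_0$-valued displacement of $(g,Y)\sA$ from $\sA$ after projection onto $\sV_0$ along $\sA_\pm$, reduces the cross ratio to a concrete linear expression in $Y$, $w$, and the block decomposition of $\tR(g)$.

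Third, a direct manipulation using the explicit formulas for $v_\pm$ shows that the dependence on $w$ cancels, and the cross ratio splits as a sum of two $\sV_0$-valued terms. The first is the projection of $Y$ onto $\sV_0$ along $\sV_+ \oplus \sV_-$, which is $\tM(g,Y)$ by definition. The second term, once one uses $(g,Y)^{-1} = (g^{-1}, -\tR(g)^{-1}Y)$ and the fact that $\sA_-$ is attracting for $(g,Y)^{-1}$, matches the analogous projection for the inverse element and is therefore $\tM((g,Y)^{-1})$.

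The main technical difficulty is the bookkeeping in the third step: ensuring the cross ratio splits cleanly into two symmetric pieces with the correct signs and normalisations. I expect to handle this by first verifying the purely linear analogue, which should follow along the lines of Ghosh \cite{Ghosh3}, and then tracking the additional translational contributions coming from $Y$; these should decouple between the directions attached to $\sA_+$ and $\sA_-$ thanks to the transversality assumptions and the resulting triangular form of $\tR(g) - \mathrm{Id}$ with respect to the decomposition $\sV = \sV_+\oplus\sV_0\oplus\sV_-$.
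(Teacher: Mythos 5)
Your overall strategy --- a direct computation in coordinates adapted to the dynamics of $g$, with the invariant affine subspaces $\sA_\pm$ recovered from $Y$ via $(\tR(g)|_{\sV^\mp_g}-\mathrm{Id})^{-1}$ --- is the same in spirit as the paper's (compare Lemma \ref{lem.stableaffineplane} and Lemma \ref{lem.conjugate}), and your first step is correct. The gap is in the second step, where you parametrise $\sA$ as an affine subspace with linear part $\sV^0$. By Definition \ref{def.affpara} an affine parabolic space has as its linear part a full parabolic subspace $\sW=h\sV^{+,0}$, of dimension $\dim\sV^0+\dim\sV^+$, not $\dim\sV^0$; and the affine cross ratio of Definition \ref{def.affcr} is built from the co-neutral maps $\nu^*_{1,4}$, $\nu^*_{2,3}$ attached to pairs of these linear parts and from intersection points $X_{1,3}\in\sA_+\cap(g,Y)\sA$, $X_{2,4}\in\sA_-\cap\sA$. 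With your choice of linear part the pair $(\sV^{+,0},\sV^0)$ is not transverse (its sum is $\sV^{+,0}\neq\sV$), so the maps entering the definition of $\beta$ are not defined, and your claim that $(g,Y)\sA$ ``has the same linear part'' as $\sA$ fails for a genuine parabolic space: the linear part of $(g,Y)\sA$ is $g\sW\neq\sW$.

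This is not mere bookkeeping: the substantive content of the theorem is precisely that $\beta(\sA_+,\sA_-,(g,Y)\sA,\sA)$ is independent of both the linear part $\sW$ and the position of $\sA$, and that the residual terms assemble into $\tM(g,Y)+\tM((g,Y)^{-1})$. The paper achieves this by first rewriting $\beta$ in terms of arbitrary points of the four spaces (Lemma \ref{lem.altmarg}), then using the compatibility relations among the co-neutral maps (Lemma \ref{lem.nu*}) together with the equivariance $\nu^*_{\pm,gw}(X)=\nu^*_{\pm,w}(g^{-1}X)$ and, crucially, the invariance of $\sA_\pm$ under $(g,Y)$, which forces $X_\pm-(g,Y)^{-1}X_\pm$ to lie in the linear part of $\sA_\pm$ and permits replacing $\nu^*_{\pm,w}$ by $\nu^*_{\pm,\mp}$. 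Your third step asserts that ``the dependence on $w$ cancels'' without supplying a mechanism; to repair the argument you would need to reinstate $\sW$ and $g\sW$ as genuine parabolic subspaces and carry out the cancellation along the lines just described --- note that the affine contribution of $Y$ enters exactly through the difference between $\nu^*_{\pm,gw}$ and $\nu^*_{\pm,w}$, which your setup erases. Finally, $\tM(g,Y)$ is $\pi_0(h^{-1}Y)$ for $h$ conjugating the hyperbolic part of $g$ into $\exp(\tJd_g)$, i.e.\ the projection of $Y$ onto the unit eigenspace read back in the model space $\sV^0$; the identifications via $h$, and via $\omega_0$ for the inverse element, must be tracked to land on the stated formula.
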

	
	In fact, we also introduce affine triple ratios and relate them with affine cross ratios (see Definition \ref{def.afftr} and Proposition \ref{prop.cr}). Similar results relating Margulis invariants and cross ratios has also been independently obtained by Andr\' es Sambarino \cite{Samba2}.
	
	Recently, there has been a lot of activity trying to characterize existence of proper affine actions in terms of the Margulis invariant spectra.	These works show that examples whose linear part contain no parabolic elements can be characterized more easily than the examples constructed by Drumm. In particular, Goldman--Labourie--Margulis \cite{GLM} gave an equivalent criterion (see also Danciger--Gu\'eritaud--Kassel \cite{DGK2}), in terms of Margulis invariants, for existence of proper affine actions of free groups inside $\mathsf{SO}_{2,1}\ltimes\R^3$ whose linear part does not contain parabolic elements. Building upon this and the work of Goldman--Labourie \cite{GL}, Ghosh \cite{Ghosh2} related the equivalent criterion of proper affine actions with the notion of an Anosov representation. Anosov representations were introduced by Labourie \cite{Labourie} to give a characterization of representations in the Hitchin components \cite{Hit}. A fundamental property of Anosov representations is that they are stable under small deformations. Let $\Gamma$ be a word hyperbolic group, $\tau:\Gamma\to\SL_2(\R)$ be any injective homomorphism and $\iota:\SL_2(\R)\to\sG$ be the irreducible representation. Then any representation of the form $\iota\circ\tau$ is called a Fuchsian representation and the connected components of the representation variety which contain Fuchsian representations are called Hitchin components. We denote these components by $\hit(\Gamma,\sG)$. Any representation contained in the Hitchin components is called a Hitchin representation. Subsequently, Ghosh--Treib \cite{GT} generalized the result of Goldman--Labourie--Margulis and gave an equivalent criterion, in terms of Margulis invariants and Anosov representations, for existence of proper affine actions of word hyperbolic groups inside $\mathsf{SO}_{2n,2n-1}\ltimes\R^{4n-1}$ whose linear part does not contain parabolic elements. We extend these results in this paper by replacing $\mathsf{SO}_{2n,2n-1}\ltimes\R^{4n-1}$ with $\sG\ltimes_\tR\sV$, where $\sG$ is a real split semisimple algebraic Lie group with trivial center and $\tR:\sG\to\SL(\sV)$ is a faithful irreducible algebraic \textit{non-swinging} representation admitting zero as a weight (see Remark \ref{rem.weight}).
	\begin{theorem}[see Theorem \ref{thm.proper}]
		Let $\Gamma$ be a word hyperbolic group and $(\rho,u):\Gamma\to\sG\ltimes_\tR\sV$ be an injective homomorphism such that $\rho\in\ano(\Gamma,\sG,\tR)$ (see Definition \ref{def.ano}). Then the action of $(\rho,u)(\Gamma)$ on $\sV$ is not proper if and only if there exists a diverging sequence $\{\gamma_n\}_{n\in\N}$ inside $\Gamma$ such that the Margulis invariants $\tM(\rho(\gamma_n),u(\gamma_n))$ stay bounded. 
	\end{theorem}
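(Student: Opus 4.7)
\medskip

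\noindent\textbf{Proof proposal.} The strategy is to use the cross-ratio formula of Theorem~\ref{thm.alphabeta} as the bridge between the Margulis invariant (an algebraic cocycle quantity) and the actual displacement $(\rho,u)(\gamma_n)v - v$ in $\sV$ (the quantity that controls properness). The standing hypotheses that $\sG$ is split semisimple with trivial center, $\rho\in\ano(\Gamma,\sG,\tR)$, and $\tR$ is an irreducible non-swinging representation with $0$ as a weight give us two ingredients: (i) a $\Gamma$-equivariant boundary map $\xi:\bdry\to \sG/\sP$ whose image consists of transverse pairs of parabolics, and (ii) a weight decomposition $\sV=\sV^+\oplus\sV^0\oplus\sV^-$ along these boundary data whose off-neutral summands are contracted/expanded exponentially by $\rho(\gamma_n)$ in a manner compatible with the Cartan projection of $\gamma_n$. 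In particular, the zero-weight line $\sV^0(\gamma_n)$ associated to the attracting/repelling parabolics of $\rho(\gamma_n)$ is precisely where the Margulis invariant $\tM(\rho(\gamma_n),u(\gamma_n))$ lives.

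\medskip

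\noindent\textbf{Backward direction ($\Leftarrow$).} Suppose $\gamma_n\to\infty$ in $\Gamma$ with $|\tM(\rho(\gamma_n),u(\gamma_n))|$ bounded. By the Anosov hypothesis, after extracting a subsequence, the attracting and repelling parabolic fixed data of $\rho(\gamma_n)$ converge to a transverse pair in $\sG/\sP$, so one obtains affine parabolic subspaces $\sA_\pm(\gamma_n)$ converging in a suitable sense. Applying the formula $\beta(\sA_+,\sA_-,(\rho,u)(\gamma_n)\sA,\sA)=\tM(\rho(\gamma_n),u(\gamma_n))+\tM((\rho,u)(\gamma_n)^{-1})$, and noting that inverting exchanges the roles of $\sA_\pm$ so the second Margulis invariant also stays bounded, one deduces that the cross ratio stays bounded on a well-chosen transverse $\sA$. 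Since $\beta$ is defined via an affine displacement along the neutral direction between $\sA$ and $(\rho,u)(\gamma_n)\sA$, a bounded cross ratio produces a fixed point $v_0\in\sV$ in some suitable transversal whose $(\rho,u)(\gamma_n)$-images stay in a compact set; this contradicts properness.

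\medskip

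\noindent\textbf{Forward direction ($\Rightarrow$).} Assume the action is not proper, so there exist $v_n,w_n$ in a compact $K\subset\sV$ and a diverging sequence $\gamma_n\in\Gamma$ with $\rho(\gamma_n)v_n+u(\gamma_n)=w_n$. The plan is to show that the component of $u(\gamma_n)$ along the neutral line $\sV^0(\gamma_n)$, which by Definition~\ref{def.marginv} equals $\tM(\rho(\gamma_n),u(\gamma_n))$ (up to a fixed normalization), is bounded. Decompose $v_n = v_n^+ + v_n^0 + v_n^-$ and $w_n = w_n^+ + w_n^0 + w_n^-$ with respect to the weight splitting attached to the attracting/repelling parabolics of $\rho(\gamma_n)$; since $v_n,w_n\in K$, each component is bounded uniformly in $n$. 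Applying $\rho(\gamma_n)$ preserves the splitting and fixes $\sV^0(\gamma_n)$ pointwise (up to the trivial $A$-action, exploited via the non-swinging condition), so the $\sV^0$-component of $\rho(\gamma_n)v_n$ equals $v_n^0$ up to a uniformly bounded factor. Projecting the equation $u(\gamma_n)=w_n-\rho(\gamma_n)v_n$ onto $\sV^0(\gamma_n)$ then expresses $\tM(\rho(\gamma_n),u(\gamma_n))$ as a difference of bounded quantities, giving the desired boundedness.

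\medskip

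\noindent\textbf{Main obstacle.} The delicate step is the forward direction: one must choose the weight-space projection $\sV=\sV^+(\gamma_n)\oplus\sV^0(\gamma_n)\oplus\sV^-(\gamma_n)$ depending on $\gamma_n$ in such a way that the projection operators have norms bounded \emph{uniformly} in $n$. This is where the Anosov property together with the non-swinging hypothesis and the Labourie--Wentworth/Potrie--Sambarino type domination estimates must be invoked: they guarantee that the angle between the weight subspaces attached to $\rho(\gamma_n)$ is bounded below, so that the individual components of $v_n$ and $w_n$ remain bounded and the neutral projection of $u(\gamma_n)$ can be extracted with uniform control.
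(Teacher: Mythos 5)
Your forward direction is essentially the paper's argument: the paper also obtains boundedness of $\tM(\rho(\gamma_n),u(\gamma_n))$ by projecting the relation between $X_n$ and $(\rho,u)(\gamma_n)X_n$ onto the neutral data attached to $\gamma_n$ (it phrases this via the feet $P_n,Q_n$ of these points on the invariant affine subspace $\sA_{\gamma_n}$ and the identity $\nu^*_{\gamma_n}(Q_n-P_n)=\tM_{(\rho,u)}(\gamma_n)$), and the uniform control of the projections comes from exactly the source you identify: convergence of the attracting/repelling data of $\gamma_n$ to a transverse pair, i.e.\ Proposition \ref{prop.partialaffineano}. One correction: this has nothing to do with Labourie--Wentworth or Potrie--Sambarino, which enter only in Section \ref{sec.DSG}; the ``angle bounded below'' statement you need is just transversality and continuity of the limit maps of the (partial affine) Anosov representation.

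The backward direction as written has a genuine gap. Boundedness of the cross ratio $\beta(\sA_+,\sA_-,(\rho,u)(\gamma_n)\sA,\sA)$ from Theorem \ref{thm.alphabeta} only controls the neutral-direction displacement of the affine parabolic subspace $\sA$; it does not by itself produce a point of $\sV$ with bounded $(\rho,u)(\gamma_n)$-orbit (and there is no fixed point $v_0$ to speak of). For a generic point the component of $(\rho,u)(\gamma_n)v-v$ along $\sV^+_{\gamma_n^+}$ blows up no matter what the Margulis invariants do, since $\rho(\gamma_n)$ expands that summand. The mechanism that closes the argument --- and the one the paper uses, bypassing the cross ratio entirely --- is Lemma \ref{lem.stableaffineplane}: each $(\rho,u)(\gamma_n)$ preserves a unique affine subspace $\sA_{\gamma_n}$ parallel to $\sV^0_{\gamma_n}$, and for $X_n\in\sA_{\gamma_n}$ the \emph{full} displacement is $(\rho,u)(\gamma_n)X_n-X_n=\nu_{\gamma_n}\tM_{(\rho,u)}(\gamma_n)$, purely neutral and hence bounded. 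One then needs the $X_n$ to lie in a compact set, which is supplied by the convergence $\sA_{\gamma_n}\to\sA_{a,b}$ coming from Proposition \ref{prop.partialaffineano}. You mention the convergence of the affine parabolic data, but without passing to the invariant affine subspace (the intersection of the attracting and repelling affine parabolic subspaces) the step ``bounded cross ratio implies bounded orbit'' is unjustified; note also that $\beta=(1-\omega_0)\tM$ in general carries less information than $\tM$ itself, so the implication must run through $\tM$ directly rather than through $\beta$.
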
  
	We note that Kassel--Smilga \cite{KaSm} have a similar theorem in a related, but slightly different, setting.
	
	Moreover, when $\sV=\fg$ and $\tR$ is the adjoint representation we generalize results of Goldman--Margulis \cite{GM} and show in Proposition \ref{prop.derivative} that the Margulis invariants are infinitesimal Jordan projections. Similar results relating Margulis invariants and Jordan projections has also been independently obtained by Andr\' es Sambarino \cite{Samba2} and Kassel--Smilga \cite{KaSm}. Furthermore, we use the stability of Anosov representations under small deformations to prove the following existence result (similar result but in a different setting was previously obtained by Ghosh \cite{Ghosh3}): 
	
	\begin{proposition}[see Proposition \ref{prop.main}]
		Let $\Gamma$ be a free group and $(\rho,u):\Gamma\to\SL_n(\R)\ltimes\fsl_n(\R)$ is an injective homomorphism such that $\rho$ is a Fuchsian representation. Then there exists a neighborhood $U$ of  $[\rho]$ in $\hit(\Gamma,\SL_n(\R))$ such that for any $[\varrho]\in U$ there exists some non empty open set $W_{[\varrho]}\subset\sT_{[\varrho]}\hit(\Gamma,\SL_n(\R))$ and for any $w\in W_{[\varrho]}$, the group $(\varrho,w)(\Gamma)$ acts properly on $\fsl_n(\R)$.
	\end{proposition}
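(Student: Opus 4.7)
The plan is to combine three ingredients already established in the paper: the stability of the Anosov condition under small deformations (so any sufficiently small neighbourhood $U$ of $[\rho]$ in $\hit(\Gamma,\SL_n(\R))$ consists of Anosov Hitchin representations), the properness criterion of Theorem \ref{thm.proper}, and the identification of Margulis invariants with infinitesimal Jordan projections from Proposition \ref{prop.derivative}. Fix $[\varrho]\in U$. By Theorem \ref{thm.proper} it then suffices to produce a cocycle $w\in Z^1(\Gamma,\fsl_n(\R))$ representing a class in $\sT_{[\varrho]}\hit(\Gamma,\SL_n(\R))$ such that no diverging sequence $\gamma_n\in\Gamma$ gives a bounded sequence of Margulis invariants $\tM(\varrho(\gamma_n),w(\gamma_n))$.

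Freeness of $\Gamma$ is what makes the construction of $w$ flexible. Since $\Gamma$ is free on generators $g_1,\dots,g_k$, a cocycle $w$ is freely prescribed by arbitrary values $w(g_i)\in\fsl_n(\R)$, and any such $w$ may be realised as the $t=0$ velocity of a real-analytic family $\varrho_t$ of Hitchin representations with $\varrho_0=\varrho$, obtained by independently deforming each $\varrho(g_i)$. Proposition \ref{prop.derivative} then identifies
\[
\tM\bigl(\varrho(\gamma),w(\gamma)\bigr)=\left.\frac{d}{dt}\right|_{t=0}\lambda\bigl(\varrho_t(\gamma)\bigr)\in\fa,
\]
where $\lambda$ denotes the Jordan projection into the closed positive Weyl chamber. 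I would choose $\varrho_t$ so that each infinitesimal Jordan projection $\frac{d}{dt}\big|_0\lambda(\varrho_t(g_i))$ lies in a common open salient subcone $C$ of the interior of the positive Weyl chamber; this is possible because $\hit(\Gamma,\SL_n(\R))$ is open and the chamber has non-empty convex interior.

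To conclude, the positivity at the generators must be propagated to every $\gamma\in\Gamma$. For this I would invoke the Labourie--Wentworth and Potrie--Sambarino estimates cited in the introduction: for Hitchin representations the Jordan projection is comparable to word length inside a salient cone in the positive Weyl chamber, and the derivative along an Anosov-stable deformation inherits the same cone control. Combined with the cross-ratio description of the Margulis invariant given by Theorem \ref{thm.alphabeta} and the uniform exponential contraction on flag varieties guaranteed by the Anosov condition, this yields, for any diverging sequence $\gamma_n$, a lower bound on $\|\tM(\varrho(\gamma_n),w(\gamma_n))\|$ growing at least like the stable translation length of $\gamma_n$. Theorem \ref{thm.proper} then gives properness of the action of $(\varrho,w)(\Gamma)$ on $\fsl_n(\R)$, and since the cone condition on the generators is open in $w$ the set $W_{[\varrho]}$ is a non-empty open subset of $\sT_{[\varrho]}\hit(\Gamma,\SL_n(\R))$.

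The principal obstacle is precisely the propagation step. Prescribing positive infinitesimal Jordan projections on a free basis is elementary, but verifying that this positivity persists uniformly over every reduced word requires genuinely higher-rank dynamical input. In the rank-one case this is the technical heart of Goldman--Labourie--Margulis; here it is the cross-ratio/triple-ratio machinery built up earlier in the paper, together with the exponential contraction supplied by the Anosov property, that does the essential work, via Theorem \ref{thm.alphabeta} and Proposition \ref{prop.derivative}.
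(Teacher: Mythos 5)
Your overall scaffolding (stability of the Anosov condition, the properness criterion of Theorem \ref{thm.proper}, the identification $\tM = \frac{d}{dt}\big|_{0}\tJd$ of Proposition \ref{prop.derivative}) matches ingredients the paper actually uses, but the core of your construction has a genuine gap, and it is exactly the one you flag yourself: the propagation step. Prescribing the cocycle $w$ on a free basis so that the infinitesimal Jordan projections of the \emph{generators} lie in an open salient cone gives no control whatsoever over $\frac{d}{dt}\big|_{0}\tJd(\varrho_t(\gamma))$ for a general reduced word $\gamma$. The results you invoke to bridge this do not do so: Labourie--Wentworth's theorem describes the direction of the variation of Jordan projections along $k$-differential directions at a Fuchsian point (it is what feeds Proposition \ref{prop.onedim}, used for the \emph{non}-properness result), and Potrie--Sambarino's entropy rigidity is likewise used only in the surface-group, non-properness direction. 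Neither says that ``the derivative along an Anosov-stable deformation inherits the cone control'' of the generators; even in the rank-one case $\mathsf{SO}_{2,1}\ltimes\R^3$, positivity of the Margulis invariants of a free basis does not imply positivity for all elements, which is precisely why Margulis's original construction and the Goldman--Labourie--Margulis criterion are nontrivial. The convexity of the spectrum (Propositions \ref{prop.convex}, \ref{prop.crossratio}) relates invariants of high powers and products asymptotically but cannot recover a uniform lower bound on $\tM(\gamma)/\ell(\gamma)$ over all of $\Gamma$ from data at finitely many generators.

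The paper avoids this problem entirely by never trying to build the cocycle from scratch. It takes Margulis's known proper affine deformations $(\tau,u)$ in $\SL_2\ltimes\fsl_2$, pushes them forward through the irreducible representation $\iota:\SL_2\to\SL_n$ (Lemma \ref{lem.existence}): there the uniform positivity of $\frac{d}{dt}\big|_0 \tJd(\tau_t(\gamma))/\ell(\gamma)$, which is Goldman--Margulis's opposite-sign phenomenon for an \emph{already proper} rank-one action, is transported by the explicit diagonal form of $\iota$ on the Cartan, giving a linear functional $\alpha$ with $\alpha(\tM)\geq c\,\ell$ on all of $\Gamma$ and hence $0\notin\tM\text{-}\Spec(\iota\circ\tau, d_e\iota\circ u)$. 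Properness then follows from Proposition \ref{prop.notproper}, and the open neighborhoods $U$ and $W_{[\varrho]}$ come from Proposition \ref{prop.existence}, which uses the Hirsch--Pugh--Shub stability of the sections and the Goldman--Labourie averaging lemmas to show that the condition $0\notin\tM\text{-}\Spec$ is open in $\sT\hit(\Gamma,\SL_n)$. If you want to salvage your approach, you would need to replace the cone-at-generators condition by an input that is already uniform over all of $\Gamma$ --- which is in effect what lifting Margulis's examples provides.
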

	
	On contrary, Mess \cite{Mess} showed that the fundamental group of a compact orientable surface without boundary and of genus atleast two admits no proper affine action on $\R^3$. Later, Goldman--Margulis \cite{GM} gave an alternate proof of Mess' result. This result can be reformulated in the following two ways:
	\begin{enumerate}
		\item Suppose $\Gamma$ is the fundamental group of a compact orientable surface without boundary and of genus atleast two and $(\rho,u):\Gamma\to\mathsf{SO}_{2,1}\ltimes\R^3$ is an injective homomorphism such that $\rho$ is Fuchsian. Then $(\rho,u)(\Gamma)$ does not act properly on $\R^3$.
		\item Suppose $\Gamma$ is the fundamental group of a compact orientable surface without boundary and of genus atleast two and $(\rho,u):\Gamma\to\SL_2(\R)\ltimes\fsl_2(\R)$ be an injective homomorphism. Then $(\rho,u)(\Gamma)$ does not act properly on $\fsl_2(\R)$.
	\end{enumerate}
	Subsequently, Labourie \cite{Labourie2} showed that if $\Gamma$ is the fundamental group of a compact orientable surface without boundary and of genus atleast two and $(\rho,u):\Gamma\to\mathsf{SO}_{n+1,n}\ltimes\R^{2n+1}$ is such that $\rho$ is Fuchsian, then $(\rho,u)(\Gamma)$ does not act properly on $\R^{2n+1}$. Recently, Danciger--Zhang \cite{DZ} and Labourie \cite{Labourie3} generalized Labourie's result and showed that if $\Gamma$ is the fundamental group of a compact orientable surface without boundary and of genus atleast two and $(\rho,u):\Gamma\to\mathsf{SO}_{n+1,n}\ltimes\R^{2n+1}$ is such that $\rho$ is Hitchin, then $(\rho,u)(\Gamma)$ does not act properly on $\R^{2n+1}$. Both these results generalize the first reformulation of Mess' result. In this article, we generalize the second reformulation of Mess' result and show the following:
	\begin{theorem}[see Theorem \ref{thm.main}]
		Let $\Gamma$ be a cocompact surface group and $(\rho,u):\Gamma\to\SL_n(\R)\ltimes\fsl_n(\R)$ is an injective homomorphism such that $\rho$ is a Fuchsian representation. Also, suppose $[u]$ in $\sT_{[\rho]}\hit(\Gamma,\SL_n(\R))$ correspond to the sum of a mixed odd differential and a $2m$-differential for any $1\leq m\leq \lfloor\frac{n}{2}\rfloor$. Then $(\rho,u)(\Gamma)$ does not act properly on $\fsl_n(\R)$.
	\end{theorem}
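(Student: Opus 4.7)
My plan is to combine four ingredients already available at this point of the paper: the equivalent criterion of Theorem~\ref{thm.proper} (non-properness equals existence of a diverging bounded-Margulis sequence), the identification of the Margulis invariant with an infinitesimal Jordan projection furnished by Proposition~\ref{prop.derivative}, the Labourie--Wentworth period formula for variations of Jordan projections along Hitchin deformations of Fuchsian representations, and the thermodynamical formalism of Potrie--Sambarino for Hitchin length spectra.

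First, I apply Theorem~\ref{thm.proper} with $\sG=\SL_n(\R)$, $\sV=\fsl_n(\R)$, and $\tR=\sad$: non-properness of the action of $(\rho,u)(\Gamma)$ on $\fsl_n(\R)$ is equivalent to the existence of a diverging sequence $\{\gamma_n\}_{n\in\N}\subset\Gamma$ along which the Margulis invariants $\tM(\rho(\gamma_n),u(\gamma_n))\in\fa$ remain bounded. Next, I pick any smooth path $\rho_t\in\hit(\Gamma,\SL_n(\R))$ with $\rho_0=\rho$ and $\left.\frac{d}{dt}\right|_{t=0}[\rho_t]=[u]$; Proposition~\ref{prop.derivative} then yields
\[
\tM(\rho(\gamma),u(\gamma)) \;=\; \left.\frac{d}{dt}\right|_{t=0}\lambda\bigl(\rho_t(\gamma)\bigr)
\]
for every $\gamma\in\Gamma$, reducing the task to producing a diverging sequence of elements with bounded infinitesimal Jordan projections.

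Since $\rho$ is Fuchsian and $[u]$ corresponds to a holomorphic $k$-differential $q_k\in H^0(\Sigma,K^k)$ under Hitchin's parametrization at the Fuchsian point, the Labourie--Wentworth formula rewrites the above derivative as a period integral
\[
\left.\frac{d}{dt}\right|_{t=0}\lambda\bigl(\rho_t(\gamma)\bigr) \;=\; \int_{[\gamma]} h_{q_k},
\]
where $h_{q_k}$ is an explicit $\fa$-valued observable on the unit tangent bundle of the hyperbolic surface $\Sigma$, constructed from $q_k$ and the principal $\fsl_2$-triple inside $\fsl_n(\R)$, and $[\gamma]$ denotes the closed Fuchsian geodesic representing $\gamma$. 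Because $q_k$ is a holomorphic differential of weight $k\geq 2$ on the closed surface $\Sigma$, a Hodge-theoretic/weight-decomposition argument forces $h_{q_k}$ to have vanishing integral against the Liouville measure of the Fuchsian geodesic flow. The Potrie--Sambarino thermodynamical formalism then supplies ergodicity and equidistribution of closed orbits of the Fuchsian flow, together with the precise comparison between the Fuchsian geodesic flow and the Anosov flow associated with any Hitchin representation near $\rho$; combined with the vanishing mean of $h_{q_k}$, this produces a diverging family $\{\gamma_n\}$ along which the periods $\int_{[\gamma_n]}h_{q_k}$, and hence the Margulis invariants, stay bounded.

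The main obstacle lies in this last extraction step. Equidistribution by itself yields only smallness of the \emph{normalized} periods $\frac{1}{\ell(\gamma_n)}\int_{[\gamma_n]}h_{q_k}$, whereas the criterion of Theorem~\ref{thm.proper} is sensitive to the unnormalized periods themselves. To bridge this gap one must leverage the specific cocycle structure of $h_{q_k}$ coming from Labourie--Wentworth at weight $k\geq 2$ (stronger than mere mean vanishing), combined with the quantitative mixing and exponential-equidistribution estimates provided by Potrie--Sambarino, in order to produce a genuine diverging sequence of closed geodesics with absolutely bounded periods.
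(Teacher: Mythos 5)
You have correctly assembled the ingredients (Theorem~\ref{thm.proper}, Proposition~\ref{prop.derivative}, Labourie--Wentworth, Potrie--Sambarino), and you have even put your finger on exactly where the argument breaks: equidistribution only controls the \emph{normalized} periods $\tM(\gamma_n)/\ell(\gamma_n)$, while Theorem~\ref{thm.proper} demands boundedness of the unnormalized invariants along a diverging sequence. Your final paragraph does not close this gap --- ``leverage the specific cocycle structure \ldots combined with quantitative mixing'' is a hope, not a proof --- and this is precisely the step the paper handles by an entirely different mechanism. The paper never produces a diverging sequence with bounded Margulis invariants. Instead it proves Proposition~\ref{prop.proper}: if $0\in\tM\text{-}\Spec(\rho,u)$ \emph{and} the whole spectrum lies in a one-dimensional subspace $\sL$ of $\sV^0$, then the action is not proper. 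The proof of that proposition uses Liv\v{s}ic's theorem to reduce the neutralized section's derivative to a single scalar function $f_1$ times a fixed neutral section, extracts a flow-invariant probability measure $\mu$ with $\int f_1\,d\mu=0$ by weak* compactness, uses connectedness of $\flow$ to find points $p_t$ with vanishing time-$t$ averages, concludes that the $\R$-action on $\acV$ is not proper, and then descends via Lemma 5.2 of Goldman--Labourie--Margulis. The one-dimensionality of the spectrum for $k$-differential directions (Proposition~\ref{prop.onedim}, from Labourie--Wentworth) is what makes Liv\v{s}ic applicable, and it is the reason the theorem is stated only for $k$-differential directions --- the introduction says explicitly that the general case fails for exactly this reason. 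Your proposal uses the $k$-differential hypothesis only to write a period formula, not to exploit one-dimensionality, so it cannot recover this step.

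A secondary issue: your route to ``$0$ is in the spectrum'' via a Hodge-theoretic vanishing of the Liouville average of $h_{q_k}$ is asserted without justification and is not what the paper does. The paper argues by contradiction (Proposition~\ref{prop.zerospec}): if $0\notin\alpha(\tM\text{-}\Spec(\rho,u))$ for a suitable $\alpha\in(\fa^+)^*$, then convexity of the spectrum (Proposition~\ref{prop.convex}) forces it to be one-signed, so after replacing $u$ by $-u$ the deformation $\rho_t$ satisfies $\alpha(\tJd_{\rho_t}(\gamma))\leq\alpha(\tJd_{\rho}(\gamma))$ for all $\gamma$ and small $t>0$; the resulting orbit-counting inequality triggers the entropy rigidity of Corollary 1.4 of Potrie--Sambarino, forcing $\rho_t$ to stay Fuchsian, whence Goldman--Margulis gives $0\in\tM\text{-}\Spec(\rho,u)$, a contradiction. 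You would need to either supply a genuine proof of your Liouville-vanishing claim or adopt this contradiction argument; and in either case you still need Proposition~\ref{prop.proper} (or an equivalent) to convert $0\in\tM\text{-}\Spec$ into non-properness.
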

	As a main ingredient in proving the above result we use results of Smilga \cite{Smilga4}, Labourie--Wentworth \cite{LW} and Potrie--Sambarino \cite{PotSam}. More generally, we are interested in knowing the answer to the following question:
	\begin{question}
		What are all $[u]$ in $\sT_{[\rho]}\hit(\Gamma,\SL_n(\R))$ for which $(\rho,u)(\Gamma)$ does not act properly on $\fsl_n(\R)$?
	\end{question}
	The naive guess would be that $(\rho,u)(\Gamma)$ does not act properly on $\fsl_n(\R)$ for all $[u]$ in $\sT_{[\rho]}\hit(\Gamma,\SL_n(\R))$ but we are unable to prove this result. An immediate reason is due to the existing restrictions in the statement of Corollary 1.4 of Potrie--Sambarino \cite{PotSam} but a more major reason is the fact that no relation is known to exist (at least by us) between Margulis invariant of a single element along different $k$-differentials.

	\subsection{Plan of the paper}
	The paper consists of four main sections.
	
	In the first section we introduce preliminary notions which are needed later to state and prove our results. The first section is divided into two subsections. In subsection \ref{sec.RLG} we recall some known results about representations of Lie groups. In subsection \ref{sec.AR} we define Anosov representations in general and state some of their properties relevant to our article. 
	
	In the second section we study affine deformations of Anosov representations. This section is again divided into two parts. In subsection \ref{sec.NS} we introduce neutral sections, a concept first introduced by Goldman--Labourie--Margulis \cite{GLM} to set the ground of defining affine equivalent of an Anosov representation. And, in subsection \ref{sec.PAA} we define appropriate affine versions of Anosov representations.
	
	In the third section we introduce certain affine invariants which play a central role in our article. The third section is divided into three subsections. In subsection \ref{sec.ACTR} we introduce affine cross ratios and affine tripe ratios and show their inter-relationship. In subsection \ref{sec.MI} we introduce Margulis invariants and relate them with affine cross ratios and infinitesimal Jordan projections. In subsection \ref{sec.CS} we prove a limit formula and obtain convexity results about the Margulis invariant spectra and the Jordan-Margulis invariant spectra. 
	
	In the fourth section we prove results related to proper affine actions. This section is divided into three parts. In subsection \ref{sec.CPA} we prove one of our main results and give a general criterion about proper affine actions and derive some other useful criteria about proper affine actions from it. In subsection \ref{sec.DFG} we use the stability of Anosov representations and show existence of proper affine actions of free surface groups with Hitchin linear part. Finally, in subsection \ref{sec.DSG} we prove the most important result of this article. We use results from \cite{LW}, \cite{PotSam} and Smilga \cite{Smilga4} to obtain non-existence results of proper affine actions of cocompact surface groups with Fuchsian linear part.

	\subsection{Acknowledgements}
	I would like to thank Fran\c cois Labourie for his insight, encouragement and in particular for funding me to visit him at Nice. I would also like to thank him for his suggestions to improve the exposition of this article and Pritam Ghosh and C. S. Rajan for many fruitful discussions. Moreover, I would like to heartily thank all those people who were beside me during the last couple of years. In particular, my parents Swapan Ghosh, Purnima Ghosh, my brother Gourab Ghosh, my well wishers Sandeepan Parekh, Nishant Chandgotia, Arghya Mondal, Neeraja Sahasrabudhe, Pallavi Panda, Indira Chatterji, Fran\c cois Labourie, Mahan Mj, Indranil Biswas, Jean-Marc Schlenker, Anna Wienhard, Cagri Sert and Rajendra Bhatia. Without their timely support this article could never have been possible.
	
	Lastly, we acknowledge that Andr\' es Sambarino, Fanny Kassel and Ilia Smilga have independently derived results similar to some of the results obtained in this article.
	
	\section{Preliminaries}\label{sec.1}

	\subsection{Representation of Lie groups}\label{sec.RLG}

	Let $\Gamma$ denote a word hyperbolic group with finitely many generators, $\sG$ be a real split semisimple algebraic Lie group with trivial center and without compact factors. We denote the Lie algebra of $\sG$ by $\fg$ and the identity element of $\sG$ by $e$. Now for any $g,h\in \sG$ we consider $\tC_g(h):=ghg^{-1}$ and denote the differential of $\tC_g(h)$ at $e$ by $\tAd_g$. The map $\tAd: \sG \to \SL(\fg)$ is a homomorphism. The differential of $\tAd$ at $e$ is denoted by $\tad$ and the Killing form on $\fg$ is denoted by $\tB$. We fix a Cartan involution $\theta:\fg\to\fg$ and consider the corresponding decomposition $\fg=\fk\oplus\fp$ where $\fk$ (respectively $\fp$) is the eigenspace of eigenvalue 1 (respectively -1). Let $\fa$ be a maximal abelian subspace of $\fp$. We consider the space of linear forms on $\fa$ and denote it by $\fa^*$. Also, for all $\alpha\in\fa^*$ we define
	\[\fg^\alpha:=\{X\in\fg\mid \tad_H(X)=\alpha(H)X\text{ for all } H\in\fa\}.\]
	Any element $\alpha\in\fa^*$ is called a \textit{restricted root} if and only if both $\alpha\neq0$ and $\fg^\alpha\neq0$. We denote the set of all restricted roots by $\Sigma$. As $\fg$ is finite dimensional, it follows that $\Sigma$ is finite. Moreover, we have
	\[\fg=\fg^0\oplus\bigoplus_{\alpha\in\Sigma}\fg^\alpha.\]
	We choose a connected component of $\fa\setminus\cup_{\alpha\in\Sigma}\ker(\alpha)$ and denote it by $\fa^{++}$. The closure of $\fa^{++}$ is denoted by $\fa^+$. Let $\sK\subset \sG$ (respectively $\sA\subset \sG$) be the connected subgroup whose Lie algebra is $\fk$ (respectively $\fa$) and let $\sA^+:=\exp{(\fa^+)}$. The Lie group $\sK$ is a maximal compact subgroup of $\sG$.
	
	The subset of restricted roots which take positive values on $\fa^+$ is denoted by $\Sigma^+$. We note that $\Sigma=\Sigma^+\sqcup-\Sigma^+$ and define
	\[\fn^\pm:=\bigoplus_{\pm\alpha\in\Sigma^+}\fg^\alpha\]
	and note that they are nilpotent subalgebras. Let $\sN\subset\sG$ be the connected subgroup whose Lie algebra is $\fn^+$. Finally, we denote the centralizer of $\fa$ inside $\sK$ by $\sM$ and denote the Lie subalgebra of $\fg$ coming from $\sM$ by $\fm$. We note that $\fg=\fn^+\oplus\fg^0\oplus\fn^-$ and $\fg^0=\fa\oplus\fm$.
	
	\begin{remark}
		If $\sG$ is split then $\fm$ is trivial and $\sM$ is a finite group (see Theorem 7.53 of \cite{Knapp}).
	\end{remark}

	Let $\sV$ be a finite dimensional real vector space with $\dim_\R\sV>1$ and $\tR:\sG\to\SL(\sV)$ be a faithful irreducible algebraic representation. Hence, we obtain $d_e\tR:\fg\to\mathfrak{sl}(\sV)$. Suppose $\lambda\in\fa^*$. We define
	\[\sV^\lambda:=\{X\in\sV\mid d_e\tR(H)(X)=\lambda(H)X\text{ for all } H\in\fa\}.\] 
	If $\sV^\lambda\neq0$, then we call $\lambda\in\fa^*$ a \textit{restricted weight} of the representation $\tR$. We denote the set of all restricted weights by $\Omega$. As $\sV$ is finite dimensional, it follows that $\Omega$ is finite. Moreover, we note that
	\[\sV=\bigoplus_{\lambda\in\Omega}\sV^\lambda.\]
	Let $\Omega$ be the set of all restricted weights. Then the following quotient group is called the \textit{restricted Weyl group}:
	\[W := \{g\in\sG\mid g\sA g^{-1}=\sA\}/ \{g\in\sG\mid ghg^{-1}=h \text{ for all } h\in\sA\}.\]
	We denote the longest element of the restricted Weyl group by $\omega_0$ i.e. $\omega_0(\Sigma^+)=-\Sigma^+$. Suppose $X\in\fa$. We consider the following sets:
	\[  \Omega^\pm(X):=\{\lambda\in\Omega\mid\pm\lambda(X)>0\}\text{ , }
	\Omega^{\pm,0}(X):=\{\lambda\in\Omega\mid\pm\lambda(X)\geqslant0\},\]
	and define $\Omega^0(X):=\Omega^{+,0}(X)\cap\Omega^{-,0}(X)$. Any two vectors $X,Y\in\fa$ are said to be of the \textit{same type}, denoted by $X\sim Y$, if and only if $\Omega^\pm(X)=\Omega^\pm(Y)$.

	\begin{definition}
		Suppose $X\in\fa$. Then 
		\begin{enumerate}
			\item $X$ is called \textit{generic} if and only if $\Omega^0(X)\subset\{0\}$,
			\item $X$ is called \textit{symmetric} if and only if $-\omega_0(X)=X$,
			\item $X$ is called \textit{extreme} if and only if
			\[\{\omega\in W\mid \omega X \sim X\}=\{\omega\in W\mid \omega X = X\}.\]
		\end{enumerate}
	\end{definition}
	
	\begin{remark}\label{rem.weight}
		In order to guarantee existence of proper affine actions, henceforth, we only consider representations, $\tR$, for which $\sV^0$ is nontrivial. 
		
		Moreover, we assume that $\tR$ admits an extreme, symmetric and generic vector and we fix one such vector $X_0$ in $\fa^+$. Such a representation $\tR$ is called a non-swinging representation.
	\end{remark}
	
	Let $\sV^{\pm,0}$ be the vector subspace generated by all $\sV^\lambda$ such that $\lambda\in\Omega^{\pm,0}(X_0)$, $\sV^\pm$ be the vector subspace generated by all $\sV^\lambda$ such that $\lambda\in\Omega^\pm(X_0)$ and $\sV^0:=\sV^{+,0}\cap\sV^{-,0}$. We observe that
	\[\sV=\sV^+\oplus\sV^0\oplus\sV^-.\]
	Let $\Sigma^+(X_0):=\{\alpha\mid\alpha(X_0)\geqslant0\}$. We denote the vector subspace generated by $\fg^0$ and all $\fg^\alpha$ such that $\alpha\in\pm\Sigma^+(X_0)$ by $\fp^{\pm,0}$.
	
	Let $g\in \sG$. We say $g$ is \textit{elliptic} if $g$ is conjugate to some element in $\sK$. We say $g$ is \textit{hyperbolic} if $g$ is conjugate to some element in $\sA$. We say $g$ is \textit{unipotent} if $g$ is conjugate to some element in $\sN$.
	
	\begin{theorem}[Jordan decomposition]
		Let $\sG$ be a real semisimple algebraic Lie group of noncompact type with trivial center. Then for any $g\in \sG$, there exist unique $g_\te,g_\tth,g_\tu\in \sG$ such that the following hold:
		\begin{enumerate}
			\item $g=g_\te g_\tth g_\tu$,
			\item $g_\te$ is elliptic, $g_\tth$ is hyperbolic and $g_\tu$ is unipotent,
			\item the elements $g_\te,g_\tth,g_\tu$ commute with each other.
		\end{enumerate}
		
	\end{theorem}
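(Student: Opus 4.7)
The plan is to derive this real Jordan decomposition from the standard Jordan--Chevalley decomposition inside $\SL(\fg)$ via the adjoint representation, which is injective because $\sG$ has trivial center. First I would note that since $\sG$ is real algebraic, $\tAd(\sG)$ is a closed algebraic subgroup of $\SL(\fg)$, so it suffices to perform the decomposition inside $\SL(\fg)$ and then check that each of the three components already lies in $\tAd(\sG)$.

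Next, I would apply the real Jordan--Chevalley decomposition to the invertible operator $M := \tAd(g)$, writing it as a commuting product $M = M_\te M_\tth M_\tu$, where $M_\tu$ is unipotent, $M_\tth$ is diagonalizable over $\R$ with positive real eigenvalues, and $M_\te$ is diagonalizable over $\C$ with eigenvalues of modulus one. Each of these three factors is a real polynomial in $M$ (by Lagrange interpolation on the generalized eigenspaces of $M$), hence belongs to the Zariski closure of the cyclic group $\langle M \rangle$, and therefore to the algebraic subgroup $\tAd(\sG)$. Faithfulness of $\tAd$ then yields unique elements $g_\te, g_\tth, g_\tu \in \sG$ with $\tAd(g_\ast) = M_\ast$ for $\ast \in \{\te,\tth,\tu\}$, and both the identity $g = g_\te g_\tth g_\tu$ and the pairwise commutativity transfer directly from the corresponding matrix identities in $\SL(\fg)$. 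Uniqueness inside $\sG$ follows from uniqueness of the Jordan--Chevalley decomposition in $\SL(\fg)$.

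Finally, it remains to identify the factors with the paper's categories, i.e.\ to show that $g_\te$, $g_\tth$, $g_\tu$ are (after $\sG$-conjugation) elements of $\sK$, $\sA$, $\sN$ respectively. The unipotency of $M_\tu$ forces $g_\tu$ into a one-parameter unipotent subgroup of $\sG$, which by a conjugacy theorem for maximal unipotent subgroups can be placed inside $\sN$. The positivity of the real eigenvalues of $M_\tth$ places $g_\tth$ inside a real split torus, hence $\sG$-conjugate into $\sA$. The condition that $\tAd(g_\te)$ has eigenvalues of modulus one forces the closed subgroup generated by $g_\te$ to be relatively compact, so by maximality of $\sK$ among compact subgroups it is $\sG$-conjugate into $\sK$. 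The principal obstacle is the middle step: ensuring that the abstract matrix Jordan components stay inside the algebraic subgroup $\tAd(\sG)$. This is precisely where the hypothesis that $\sG$ be algebraic (rather than merely a connected Lie group) is essential, via the classical fact that the Jordan decomposition of an element of an algebraic group is internal to that group.
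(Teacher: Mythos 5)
The paper offers no proof of this statement: it is quoted as the classical complete multiplicative Jordan decomposition for real semisimple algebraic groups (the standard references are Helgason, Ch.~IX, Eberlein, Thm.~2.19.24, or Kostant; the paper only ever uses the resulting Jordan projection $\tJd$). So there is nothing in the paper to compare against, and your job is to reconstruct the standard argument. Your skeleton is the right one: pass to the faithful representation $\tAd$, decompose $M=\tAd(g)$ inside $\SL(\fg)$, show the factors stay in the algebraic subgroup $\tAd(\sG)$, pull back, and identify the three factors with $\sG$-conjugates into $\sK$, $\sA$, $\sN$.

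Two steps need repair. First, the implication ``each factor is a real polynomial in $M$, hence belongs to the Zariski closure of the cyclic group $\langle M\rangle$'' is false as stated: a polynomial in $M$ need not lie in any group containing $M$ (the polynomial $p(x)=x-\lambda$, for $\lambda$ an eigenvalue, already produces a singular matrix). The polynomial expression buys you commutation with the centralizer of $M$ and invariance of $M$-invariant subspaces, but membership in the algebraic subgroup is a separate theorem. Second, the classical internality fact you invoke at the end (Chevalley; Borel, \emph{Linear Algebraic Groups}, I.4.4) covers only the semisimple--unipotent splitting $M=M_{s}M_{\tu}$. The further splitting $M_{s}=M_{\te}M_{\tth}$ is a real-analytic, not algebraic, phenomenon --- it involves absolute values of eigenvalues --- and its internality needs its own argument: the Zariski closure $D$ of $\langle M_{s}\rangle$ is a diagonalizable group cut out by the monomial relations $\prod_i\lambda_i^{a_i}=1$ satisfied by the eigenvalues of $M_{s}$, and each such relation persists when every $\lambda_i$ is replaced by $|\lambda_i|$ (resp.\ by $\lambda_i/|\lambda_i|$), so $M_{\tth}$ and $M_{\te}$ lie in $D\subset\tAd(\sG)$. (One also has to check the factors land in the Hausdorff-identity component $\tAd(\sG)$ of the real points, which follows since $M_{\tu}$ and $M_{\tth}$ lie on one-parameter subgroups and $M_{\te}$ is then a product of elements of $\tAd(\sG)$.) With these substitutions your outline, including the final conjugation of the factors into $\sK$, $\sA$, $\sN$ and the uniqueness argument, goes through.
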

	\begin{definition}\label{def.jordan}
		Let $\sG$ be a real semisimple algebraic Lie group of noncompact type with trivial center and let $g\in \sG$. Then the \textit{Jordan projection} of $g$, denoted by $\tJd_g$, is the unique element in $\fa^+$ such that $g_\tth$ is a conjugate of $\exp{(\tJd_g)}$.
	\end{definition}
	
	\begin{definition}
		Let $\sG$ be a real semisimple algebraic Lie group of noncompact type with trivial center and let $g\in \sG$. Then $g$ is called \textit{loxodromic} if and only if $\tJd_g\in\fa^{++}$.
	\end{definition}
	
	We use Lemmas 6.32 and 6.33 (ii) of \cite{BQ} and Appendix V.4 of \cite{Whit} (see also \cite{Tit}) to observe that $\tJd$ is continuous. Moreover, for all $g,h\in\sG$ we have $\tJd_{hgh^{-1}}=\tJd_g$. The analytic version of the implicit function theorem (see Theorem 6.1.2 of Krantz--Parks \cite{KraPa}) imply that $\tJd$ vary analytically over loxodromic elements. Moreover, when $\sG=\SL(d,\R)$ then $\tJd(g)=(\log|\lambda_1(g)|,\dots,\log|\lambda_d(g)|)$, where each $\lambda_i(g)$ is an eigenvalue of $g$ such that $|\lambda_1(g)|\geq \dots \geq|\lambda_d(g)|$.

	\begin{theorem}
		Let $\sG$ be a real semisimple Lie group of noncompact type. Then for any $g\in \sG$ there exists $k_1,k_2\in \sK$, not necessarily unique, and a unique $\kappa(g)\in \fa^+$ such that $g=k_1\exp{(\kappa(g))}k_2$ i.e.
		$\sG=\sK\sA^+\sK$.
	\end{theorem}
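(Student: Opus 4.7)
The plan is to derive $\sG=\sK\sA^+\sK$ in two stages: first a global Cartan (polar) decomposition $\sG=\sK\exp(\fp)$, and then diagonalisation of elements of $\fp$ under the adjoint action of $\sK$. Throughout I would rely on the standing hypotheses that $\sG$ is real semisimple of noncompact type with trivial centre, that the Cartan involution $\theta$ of $\fg$ has been fixed, and that $\fg=\fk\oplus\fp$ is the corresponding eigenspace decomposition with $\fa\subset\fp$ maximal abelian.

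For the first stage I would lift $\theta$ to an involutive automorphism $\Theta$ of $\sG$; under the standing hypotheses this lift exists and is unique. Given $g\in\sG$, the element $p:=g\,\Theta(g)^{-1}$ is $\Theta$-symmetric, and a spectral/square-root argument shows $p=\exp(2X)$ for a unique $X\in\fp$. Setting $k:=g\exp(-X)$ then yields $\Theta(k)=k$, i.e.\ $k\in\sK$, so $g=k\exp(X)$. For the second stage, since $\fa$ is maximal abelian in $\fp$, every $\tAd(\sK)$-orbit in $\fp$ meets $\fa$ (all maximal abelian subspaces of $\fp$ are $\tAd(\sK)$-conjugate by a standard compactness argument), and the restricted Weyl group $W$, realised by elements of the normaliser of $\fa$ in $\sK$, acts on $\fa$ with $\fa^+$ as a fundamental domain. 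Hence $X=\tAd(k')H$ for some $k'\in\sK$ and $H\in\fa^+$, whence
\[
g=k\exp(X)=k\bigl(k'\exp(H)(k')^{-1}\bigr)=(kk')\exp(H)(k')^{-1},
\]
which is the desired decomposition with $k_1:=kk'$, $k_2:=(k')^{-1}$, and $\kappa(g):=H\in\fa^+$.

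For uniqueness of $\kappa(g)$, if $g=k_1\exp(H)k_2=\tilde k_1\exp(\tilde H)\tilde k_2$ with $H,\tilde H\in\fa^+$, then $\exp(\tilde H)$ and $\exp(H)$ are $\sK$-conjugate inside $\sG$, so $\tilde H=wH$ for some $w\in W$; since $\fa^+$ is a strict fundamental domain for $W$, $\tilde H=H$. The non-uniqueness of the $\sK$-factors is accounted for by the centraliser of $\exp(H)$ in $\sK$, which contributes the freedom $(k_1,k_2)\mapsto(k_1 m,m^{-1}k_2)$ for $m$ in this centraliser; this is nontrivial whenever $H$ lies on a wall or $\sM$ is nontrivial. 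The main obstacle is the first stage: upgrading the algebra splitting $\fg=\fk\oplus\fp$ to a global diffeomorphism $\sK\times\fp\to\sG$, $(k,X)\mapsto k\exp(X)$. This requires both the existence of the global Cartan involution $\Theta$ and the fact that $\exp\colon\fp\to\exp(\fp)$ is a diffeomorphism onto a closed submanifold, which are substantive theorems in the structure theory of real semisimple Lie groups. Once these are in hand, the remaining steps reduce to the $\tAd(\sK)$-action on $\fp$ and the fundamental domain property of $\fa^+$ for $W$ on $\fa$.
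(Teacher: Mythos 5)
The paper does not actually prove this statement: it is quoted as a classical piece of structure theory (the Cartan, or $\sK\sA^+\sK$, decomposition; see Theorem 7.39 of Knapp's \emph{Lie Groups Beyond an Introduction}), so there is no in-paper argument to compare yours against. Your sketch is the standard textbook proof and its architecture is correct: a global polar decomposition $\sG=\sK\exp(\fp)$, followed by conjugation of $X\in\fp$ into the closed chamber $\fa^+$ using the $\tAd(\sK)$-conjugacy of maximal abelian subspaces of $\fp$ and the fact that $\fa^+$ is a strict fundamental domain for $W$ on $\fa$. You also correctly isolate the two substantive inputs (existence of the global involution $\Theta$ and the diffeomorphism $\sK\times\fp\to\sG$).

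Two points need tightening. First, a bookkeeping slip in the polar decomposition: if you symmetrize as $p:=g\,\Theta(g)^{-1}=\exp(2X)$, the companion element that lands in $\sK$ is $k:=\exp(-X)g$, giving $\sG=\exp(\fp)\sK$; with your choice $k:=g\exp(-X)$ one computes $\Theta(k)=\Theta(g)\exp(X)$, which equals $k$ only if $\Theta(g)^{-1}g=\exp(2X)$, so you should instead take $p:=\Theta(g)^{-1}g$. Second, in the uniqueness argument, the claim that $k_1\exp(H)k_2=\tilde k_1\exp(\tilde H)\tilde k_2$ forces $\exp(H)$ and $\exp(\tilde H)$ to be $\sK$-conjugate is not immediate from a two-sided decomposition. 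The standard repair is to apply $\Theta$: one gets $\Theta(g)^{-1}g=k_2^{-1}\exp(2H)k_2=\tilde k_2^{-1}\exp(2\tilde H)\tilde k_2$, so $\exp(2H)$ and $\exp(2\tilde H)$ are $\sK$-conjugate; injectivity of $\exp$ on $\fp$ then shows $2H$ and $2\tilde H$ are $\tAd(\sK)$-conjugate elements of $\fa$, hence $W$-conjugate, hence equal since both lie in the fundamental domain $\fa^+$. With these two repairs the argument is complete.
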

	\begin{definition}
		The map $\kappa:\sG\to\fa^+$ is called the Cartan projection.
	\end{definition}
	
	We use Corollary 5.2 of Benoist \cite{Ben2} to note that
	\[\lim_{n\to\infty}\frac{\kappa(g^n)}{n}=\tJd(g).\]
	Moreover, when $\sG=\SL(d,\R)$ then $\kappa(g)=(\log\sigma_1(g),\dots,\log\sigma_d(g))$, where each $\sigma_i(g)$ is a singular value of $g$ such that $\sigma_1(g)\geq \dots \geq\sigma_d(g)$.

	\subsection{Anosov representations}\label{sec.AR}
	
	Let $\sG$ be a semisimple Lie group and $\sP_\pm$ be a pair of opposite parabolic subgroups of $\sG$. We consider 
	\[\cX\subset\sG/\sP_+\times\sG/\sP_-\] 
	to be the space of all pairs $(g\sP_+,g\sP_-)$ for $g\in\sG$. We observe that the left action of $\sG$ on $\sG/\sP_+\times\sG/\sP_-$ is transitive on $\cX$. Moreover, the stabilizer of the point $(\sP_+,\sP_-)\in\cX$ is $\sP_+\cap\sP_-$. It follows that $\sG/(\sP_+\cap\sP_-)\cong\cX$. Also, $\cX$ is open in $\sG/\sP_+\times\sG/\sP_-$. Therefore,
	\[\sT_{\left(g\sP_+,g\sP_-\right)}\cX = \sT_{g\sP_+}\sG/\sP_+ \oplus \sT_{g\sP_-}\sG/\sP_-.\]
	
	Let $\Gamma$ be a hyperbolic group and $\bdry$ be the boundary of $\Gamma$. There is a natural action of $\Gamma$ on $\bdry$. We consider the restriction of the diagonal action of $\Gamma$ on $\bdry^2$ to
	\[\partial_\infty\Gamma^{(2)} := \partial_\infty\Gamma\times\partial_\infty\Gamma\setminus\{(x,x) \mid x\in \partial_\infty\Gamma\}.\]
	We denote $\partial_\infty\Gamma^{(2)}\times\mathbb{R}$ by $\cflow$ and for $(x,y)\in\partial_\infty\Gamma^{(2)}$ and $s,t\in\mathbb{R}$ let
	\begin{align*}
		\phi_t: \cflow&\rightarrow \cflow\\
		(x,y,s)&\mapsto (x,y,s+t).
	\end{align*}
	Gromov \cite{Gromov} showed the existence of a proper cocompact action of $\Gamma$ on $\cflow$ which commutes with the the flow $\{\phi_t\}_{t\in\mathbb{R}}$ and which extends the diagonal action of $\Gamma$ on $\partial_\infty\Gamma^{(2)}$ (see \cite{Champetier}, \cite{Mineyev} for more details). Moreover, there exists a metric $d$ on $\cflow$ well defined only up to H\"older equivalence such that the $\Gamma$ action on $\cflow$ is isometric, the action of the flow $\phi_t$ is via Lipschitz homeomorphisms and every orbit of the flow $\{\phi_t\}_{t\in\mathbb{R}}$ gives a quasi-isometric embedding.
	We call the quotient space $\flow\defeq\Gamma\backslash\cflow$ the \textit{Gromov flow space}. $\flow$ is a compact connected metric space (see Lemma 2.3 of \cite{GT} for more details) and it admits a partition of unity (see Section 8.2 of \cite{GT} for more details). If $\gamma\in\Gamma$ is of infinite order, we define its \emph{translation length}
	\[ \ell(\gamma) = \lim_{n\to\infty} \frac{d(\gamma^nx,x)}{n}, \]
	where $x\in\cflow$ is any point. Then we have $\ell(\gamma) = \inf\{(d(y,\gamma y))\mid y\in\cflow\}$ and the infimum is realized on the \emph{axis} $\{ (\gamma_+,\gamma_-,t), t\in\R \}$ (see Theorem 60 of \cite{Mineyev} for more details).
	
	\begin{definition}
		Let $\Gamma$ be a hyperbolic group, $\sG$ be a semisimpe Lie group, $\sP_\pm$ be a pair of opposite parabolic subgroups of $\sG$ and $\rho:\Gamma\to\sG$ be an injective homomorphism. Then $\rho$ is called Anosov with respect to the pair $\sP_\pm$ if the following conditions hold:
		\begin{enumerate}
			\item There exist continuous, injective, $\rho(\Gamma)$-equivariant limit maps
			\[\xi^\pm:\bdry\rightarrow\sG/\sP_\pm\]
			such that $\xi(p)\defeq(\xi^+(p_+),\xi^-(p_-))\in\cX$ for any               $p=(p_+,p_-,t)\in\cflow$.
			\item There exist positive constants $C,c$ and a continuous collection     of $\rho(\Gamma)$-equivariant Euclidean metrics $\|\cdot\|_p$ on            $\sT_{\xi(p)}\cX$ for $p\in\cflow$ such that 
			\begin{align*}
				\|v^\pm\|_{\phi_{\pm t}p}\leqslant Ce^{-ct}\|v^\pm\|_p
			\end{align*}
			for all $v^\pm\in\sT_{\xi^\pm(p_\pm)}\sG/\sP_\pm$ and for all               $t\geqslant 0$.
		\end{enumerate}
	\end{definition}
	
	Let $P_k^+$ be the stabilizer of a standard $k$-plane and $P_k^-$ be the stabilizer of the standard complimentary $(d-k)$-plane. Then a $(P_k^+,P_k^-)$-Anosov representation is called in short a $P_k$-Anosov representation. Now we combine results obtained by Guichard--Wienhard \cite{GW2}, Kapovich--Leeb--Porti \cite{KLP}, Bochi--Potrie--Sambarino \cite{BPS}, Gu\'eritaud--Guichard--Kassel--Wienhard \cite{GGKW} and Kassel--Potrie \cite{KP} to state the following important characterization of Anosov representations (see Canary \cite{Can} and Remark 4.3 (5) of Kassel--Potrie \cite{KP} for more details):
	
	\begin{theorem}
		Let $\rho : \Gamma \to \SL(d, \R)$ be an injective homomorphism. Then the following holds:
		\begin{enumerate}
			\item $\rho$ has an uniform $k$-gap in singular values if it is a $P_k$-Anosov representation,
			\item $\rho$ is a $P_k$-Anosov representation if it has an uniform $k$-gap in singular values,
			\item $\rho$ has an uniform $k$-gap in eigenvalues if and only if it has a uniform $k$-gap in singular values provided $\Gamma$ is word hyperbolic.
		\end{enumerate}
	\end{theorem}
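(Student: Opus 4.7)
The plan is that this theorem is an assembly of three known results, so my strategy is to identify which cited paper contributes to each of the three implications and to explain how the definition in the excerpt is converted into a quantitative statement about Cartan projections.

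For (1), I would start from the contraction axiom for a $P_k$-Anosov representation. Fixing a loxodromic $\gamma\in\Gamma$, its axis in $\cflow$ is $\{(\gamma_+,\gamma_-,t) : t\in\R\}$ and the time $\ell(\gamma)$ map of $\phi_t$ on the axis is (up to $\Gamma$-action) the action of $\gamma$ on $\sT_{\xi(p)}\cX=\sT_{\xi^+(\gamma_+)}\sG/P_k^+\oplus\sT_{\xi^-(\gamma_-)}\sG/P_k^-$. The contraction estimate $\|v^\pm\|_{\phi_{\pm t}p}\le Ce^{-ct}\|v^\pm\|_p$ then produces an exponential gap between the weights of $\rho(\gamma)$ on the two transverse subspaces; via the Cartan decomposition this is exactly the statement that $\log(\sigma_k(\rho(\gamma))/\sigma_{k+1}(\rho(\gamma)))\ge c\,\ell(\gamma)-C'$. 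Using that orbits of $\phi_t$ are quasi-isometric embeddings (and the $\Gamma$-equivariance of the metrics) one upgrades this to the estimate $\log(\sigma_k(\rho(\gamma))/\sigma_{k+1}(\rho(\gamma)))\ge c|\gamma|-C'$ for \emph{every} $\gamma\in\Gamma$, which is the uniform $k$-gap. The argument here is due to Guichard--Wienhard and Gu\'eritaud--Guichard--Kassel--Wienhard.

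For (2), the converse, I would invoke directly the characterization of Kapovich--Leeb--Porti and Bochi--Potrie--Sambarino: given a uniform $k$-gap, one first uses the Cartan decomposition $\rho(\gamma)=k_1\exp(\kappa(\rho(\gamma)))k_2$ to define, for each $\gamma$, a $k$-plane $U_k(\gamma)\in\sG/P_k^+$, namely the image under $k_1$ of the span of the first $k$ standard basis vectors. The uniform gap forces the sequence $U_k(\gamma_n)$ to converge whenever $\gamma_n\to\eta\in\bdry$ in a suitable sense, giving the boundary map $\xi^+$; $\xi^-$ is constructed symmetrically. Transversality, continuity and equivariance follow from the Morse-like estimates, and the required contraction along $\phi_t$ is extracted from the gap together with word-hyperbolicity. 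The main technical obstacle is bridging the discrete estimate on $\kappa(\rho(\gamma))$ with the continuous contraction statement on $\cflow$; I would handle this by the standard comparison of word length with $d(e,\gamma\cdot o)$ in $\cflow$.

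For (3), I would cite Kassel--Potrie directly. The easy direction (eigenvalue gap implies singular value gap) follows from $\tJd_{\rho(\gamma)}=\lim_n\kappa(\rho(\gamma^n))/n$ combined with a quasi-isometry-style argument that propagates the gap from powers to all elements using word-hyperbolicity. The nontrivial direction (singular value gap implies eigenvalue gap) rests on Benoist-type arguments combined with stability of the gap under the group law, which is where hyperbolicity of $\Gamma$ enters crucially: Kassel--Potrie extract from the dynamics on $\cflow$ a bi-Lipschitz comparison between $\ell(\gamma)$ and the top Jordan exponent along an axis.

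The main obstacle in this assembly is that each of the cited papers works in a slightly different formalism — projective Morse, singular value gap, CLI, or metric-Anosov — so I would need to verify the dictionaries (especially that the metric-Anosov definition used in the excerpt matches the one used in Bochi--Potrie--Sambarino and Kassel--Potrie) before patching the three implications into a single chain.
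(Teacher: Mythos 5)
The paper offers no proof of this theorem at all: it is stated purely as a combination of the cited results of Guichard--Wienhard, Kapovich--Leeb--Porti, Bochi--Potrie--Sambarino, Gu\'eritaud--Guichard--Kassel--Wienhard and Kassel--Potrie, which is exactly the assembly you describe, and your attribution of each of the three implications to the correct sources matches the paper's intent. Your sketch adds plausible detail about how each cited argument works, but in substance it takes the same route as the paper, namely quoting the known characterizations.
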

	
	Now we introduce the particular parabolic subgroups that we will be interested in for the rest of this article. Let $\sP^\pm\subset\sG$ be the normalizer of $\fp^{\pm,0}$, $\mathsf{Stab}_\sG(\sV^\pm)\subset\sG$ be the stabilizer of $\sV^\pm$ and $\mathsf{Stab}_\sG(\sV^{\pm,0})\subset\sG$ be the stabilizer of $\sV^{\pm,0}$.
	
	\begin{definition}\label{def.linpara}
		Any subspace of $\sV$ which is of the form $g\sV^{+,0}$ for some $g\in\sG$ is called a \textit{parabolic space} and any two parabolic spaces are called \textit{transverse} if their sum is the whole space $\sV$.
	\end{definition}

	\begin{proposition}\label{prop.hijk}
		The pair $(\sV^{+,0},\sV^{-,0})$ is a transverse pair of parabolic subspaces and for any pair of transverse parabolic spaces $(\sV_i,\sV_j)$, there exists $h_{i,j}\in\sG$ such that $(\sV_i,\sV_j)=h_{i,j}(\sV^{+,0},\sV^{-,0})$.
		
		If $\sV_i$, $\sV_j$ and $\sV_k$ are three parabolic subspaces which are mutually transverse to each other and $h_{i,j}, h_{i,k}, h_{k,j}\in\sG$ be such that
		\[(\sV^{+,0},\sV^{-,0})=h_{i,j}^{-1}(\sV_i,\sV_j)=h_{i,k}^{-1}(\sV_i,\sV_k)=h_{k,j}^{-1}(\sV_k,\sV_j),\]
		then $h_{i,j}\sV^+=h_{i,k}\sV^+$ and $h_{i,j}\sV^-=h_{k,j}\sV^-$. Moreover, we have \[\sP^\pm=\mathsf{Stab}_\sG(\sV^{\pm,0})=\mathsf{Stab}_\sG(\sV^{\pm}),\] 
		and all $Y\in\sV^0$ is preserved by any $h\in(\sP^+\cap\sP^-)$ i.e $hY=Y$. 
	\end{proposition}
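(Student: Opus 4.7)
The strategy is to prove each of the four assertions in turn, using throughout the weight-space decomposition $\sV = \sV^+ \oplus \sV^0 \oplus \sV^-$ and the analogous root decomposition of $\fg$. For the transversality claim, $\sV^{+,0}$ is a parabolic subspace by definition, and $\sV^{-,0}$ is parabolic because any lift $\tilde\omega_0 \in N_\sK(\sA)$ of the longest Weyl element carries $\sV^{+,0}$ onto $\sV^{-,0}$ — the symmetry hypothesis $\omega_0(X_0) = -X_0$ forces $\omega_0(\Omega^{+,0}(X_0)) = \Omega^{-,0}(X_0)$. The sum $\sV^{+,0} + \sV^{-,0}$ contains all three summands $\sV^+, \sV^0, \sV^-$, hence equals $\sV$.

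For the stabilizer identifications $\sP^\pm = \mathsf{Stab}_\sG(\sV^{\pm,0}) = \mathsf{Stab}_\sG(\sV^\pm)$, the key tool is the weight arithmetic $d_e\tR(\fg^\alpha)(\sV^\lambda) \subseteq \sV^{\lambda + \alpha}$. The inclusions $\sP^\pm \subseteq \mathsf{Stab}_\sG(\sV^{\pm,0}) \cap \mathsf{Stab}_\sG(\sV^\pm)$ follow at the Lie algebra level: if $\alpha \in \Sigma^+(X_0)$ and $\lambda(X_0) \geq 0$ (resp. $> 0$), then $(\lambda+\alpha)(X_0) \geq 0$ (resp. $> 0$), and the action of $\fg^0$ preserves each weight space, so $\fp^{+,0}$ stabilizes both $\sV^{+,0}$ and $\sV^+$; integrating via the connectedness of the parabolic $\sP^+$ gives the group-level inclusion. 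For the reverse inclusions, $\mathsf{Stab}_\sG(\sV^{+,0})$ is a closed algebraic subgroup containing the parabolic $\sP^+$, hence is itself parabolic $\sP' \supseteq \sP^+$; checking at the Lie algebra level that any $X \in \fg \setminus \fp^{+,0}$ has a nontrivial component in some $\fg^\alpha$ with $\alpha(X_0) < 0$, and using faithfulness and irreducibility of $\tR$, one exhibits a weight vector in $\sV^{+,0}$ whose image under $d_e\tR(X)$ escapes $\sV^{+,0}$. The symmetric argument treats $\mathsf{Stab}_\sG(\sV^\pm)$.

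With the stabilizer identifications in hand, homogeneity of transverse parabolic pairs follows from the Bruhat decomposition (the open cell in $\sG/\sP^+ \times \sG/\sP^-$ is a single $\sG$-orbit), which furnishes the elements $h_{i,j}$. For the triangle condition, the element $h_{i,k}^{-1} h_{i,j}$ preserves $\sV^{+,0}$ (both $h_{i,j}$ and $h_{i,k}$ send $\sV^{+,0}$ to $\sV_i$), hence lies in $\sP^+ = \mathsf{Stab}_\sG(\sV^+)$, yielding $h_{i,j}\sV^+ = h_{i,k}\sV^+$; the analogous argument with $h_{i,j}^{-1} h_{k,j} \in \sP^- = \mathsf{Stab}_\sG(\sV^-)$ yields $h_{i,j}\sV^- = h_{k,j}\sV^-$.

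Finally, for the pointwise fixing of $\sV^0$ by $\sP^+ \cap \sP^-$, the Levi Lie algebra is $\fg^0 \oplus \bigoplus_{\alpha \ne 0,\, \alpha(X_0) = 0} \fg^\alpha$. Since $\sG$ is split, $\fg^0 = \fa$ acts as zero on $\sV^0$ by definition of the zero weight space. For a nonzero root $\alpha$ with $\alpha(X_0) = 0$, genericity of $X_0$ forces $\alpha \notin \Omega$ (otherwise $\alpha \in \Omega^0(X_0) \subseteq \{0\}$ contradicts $\alpha \neq 0$), so $\sV^\alpha = 0$ and $d_e\tR(\fg^\alpha)\sV^0 \subseteq \sV^\alpha$ vanishes. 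Hence the identity component of $\sP^+ \cap \sP^-$ fixes $\sV^0$ pointwise. The main obstacle I anticipate is the disconnected part: one must verify that the finite Levi component group — containing $\sM = Z_\sK(\sA)$ in the split case — acts trivially on $\sV^0$. Since $\sM$ centralizes $\sA$, it acts on each $\sV^\lambda$ by a character determined by $\lambda$; the extremality and symmetry of $X_0$ combined with the non-swinging hypothesis on $\tR$ are precisely what force this character on $\sV^0$ to be trivial.
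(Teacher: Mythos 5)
The paper does not actually prove this proposition: its ``proof'' is a single sentence deferring everything to Propositions 4.4, 4.19, 4.27, Lemma 4.21 and Example 4.22(2) of Smilga, using that $\sG$ is split. Your proposal is therefore a genuinely different route --- a self-contained argument --- and its skeleton is essentially the standard one underlying Smilga's results: weight arithmetic $d_e\tR(\fg^\alpha)\sV^\lambda\subseteq\sV^{\lambda+\alpha}$ for the stabilizer inclusions, the open Bruhat cell for homogeneity of transverse pairs, and the Levi structure of $\sP^+\cap\sP^-$ for the last claim. The parts that are fully correct are the transversality of $(\sV^{+,0},\sV^{-,0})$ via a lift of $\omega_0$ and symmetry of $X_0$, the forward inclusions $\fp^{\pm,0}\subseteq\mathrm{Lie}(\mathsf{Stab}_\sG(\sV^{\pm,0}))\cap\mathrm{Lie}(\mathsf{Stab}_\sG(\sV^{\pm}))$, the triangle condition (which is an immediate corollary of the stabilizer identities, as you say), and the vanishing of $d_e\tR(\fg^\alpha)\sV^0$ for $\alpha\neq0$ with $\alpha(X_0)=0$ via genericity.

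Three steps, however, are asserted rather than proved, and the third is attributed to the wrong hypotheses. (a) The reverse inclusions $\mathsf{Stab}_\sG(\sV^{\pm,0})\subseteq\sP^\pm$ and $\mathsf{Stab}_\sG(\sV^{\pm})\subseteq\sP^\pm$: exhibiting, for each $X\in\fg\setminus\fp^{+,0}$, a weight vector in $\sV^{+,0}$ pushed out of $\sV^{+,0}$ is exactly the nontrivial content of Smilga's Proposition 4.19; irreducibility alone does not hand you the required nonvanishing of $d_e\tR(\fg^\alpha)$ on a suitable $\sV^\lambda$, and this needs an actual argument. (b) The identification of linear transversality ($\sV_i+\sV_j=\sV$) with the open $\sG$-orbit in $\sG/\sP^+\times\sG/\sP^-$: the ``only if'' direction is where the \emph{extremality} of $X_0$ enters (one must rule out $w\sV^{+,0}+\sV^{-,0}=\sV$ for $w$ outside the stabilizer of $X_0$ in $W$), and you never invoke extremality here. (c) The triviality of the $\sM$-action on $\sV^0$: your claim that $\sM$ acts on each $\sV^\lambda$ by a single character is false in general, and the resolution does not come from extremality, symmetry, or the non-swinging hypothesis. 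The correct mechanism is precisely the one the paper's one-line proof points at: since $\sG$ is split, $\fa$ is the Lie algebra of a maximal torus $\sT$ and $\sM=Z_\sK(\sA)\subset\sT$, so $\sV^0$ is the zero weight space of the full torus and $\sT$ --- hence $\sM$ --- acts trivially on it. With (a)--(c) repaired (or replaced by the citations the paper uses), your plan closes up into a complete proof.
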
	
	\begin{proof}
		As $\sG$ is split, our result follows from Propositions 4.4, 4.19, 4.27, Lemma 4.21 and Example 4.22(2) of Smilga \cite{Smilga3}.
	\end{proof}
	Let $\sF^\pm$ be the flag $\{0\}\subset\sV^{\pm}\subset\sV^{\pm,0}\subset\sV$. Hence, it follows that
	\[\sP^\pm=\mathsf{Stab}_{\sG}(\sF^{\pm})\subset\mathsf{Stab}_{\SL(\sV)}(\sF^{\pm})=:\sQ^\pm.\]
	
	\begin{proposition}[Proposition 4.3 of Guichard--Wienhard \cite{GW2}]\label{prop.GW}
		Let $\tR:\sG\to\SL(\sV)$ be a faithful irreducible representation. Then $\rho:\Gamma\to\sG$ is Anosov with respect to $(\sP^+,\sP^-)$ if and only if $\tR\circ\rho$ is Anosov in $\SL(\sV)$ with respect to $(\sQ^+,\sQ^-)$. 
	\end{proposition}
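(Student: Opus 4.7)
The plan is to invoke the singular-value characterization of Anosov representations recalled from Kassel--Potrie above, which reduces the proposition to an equivalence between two systems of linear inequalities on the Cartan projection $\kappa(\rho(\gamma))$ in $\fa^+$, together with a compatibility statement for the limit maps.

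The first observation is a standard fact: for $g \in \sG$, the logarithms of the singular values of $\tR(g)$ are, with multiplicity, the collection $\{\lambda(\kappa(g)) : \lambda \in \Omega\}$, ordered so that the top block of size $\dim\sV^+$ consists of the weights in $\Omega^+(X_0)$, the middle block of size $\dim\sV^0$ of those in $\Omega^0(X_0)$, and the bottom block of the weights in $\Omega^-(X_0)$. Under the Kassel--Potrie criterion, $\rho$ being $(\sP^+,\sP^-)$-Anosov is equivalent to a uniform linear gap $\alpha(\kappa(\rho(\gamma))) \geq c|\gamma| - C$ for each simple root $\alpha$ in the type of $\sP^+$, i.e.\ satisfying $\alpha(X_0) > 0$. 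On the other hand, $\tR\circ\rho$ being $(\sQ^+,\sQ^-)$-Anosov is equivalent to uniform singular-value gaps of $\tR(\rho(\gamma))$ at the two positions marked by the flag $\sF^\pm = \{0\}\subset\sV^\pm\subset\sV^{\pm,0}\subset\sV$, which in terms of $\kappa$ translates to uniform linear lower bounds of the form $(\lambda - \mu)(\kappa(\rho(\gamma))) \geq c|\gamma| - C$ for every pair $\lambda,\mu \in \Omega$ separated across one of those flag steps.

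The substantive step is then to show that these two systems of inequalities are equivalent up to the constants $c,C$. One direction is immediate: any weight difference $\lambda - \mu$ with $\lambda(X_0) > \mu(X_0)$ is a non-negative integer combination of simple roots in the type of $\sP^+$, so a uniform gap along those simple roots forces one across the relevant weight differences. The converse is the heart of the argument and requires that each simple root $\alpha$ in the type of $\sP^+$ arises (up to a bounded error) as such a weight difference between $X_0$-adjacent strata. This is a standard fact for faithful irreducible representations of a real split semisimple group: the $\sG$-span of a highest weight vector must exhaust $\sV$, so each simple root reachable by the root action on weights in $\Omega^+(X_0)\cup\Omega^0(X_0)$ must produce a weight in the next lower stratum, otherwise one would find a proper nonzero $\sG$-invariant subspace of $\sV$.

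To match limit maps, observe that $\tR$ induces an $\sG$-equivariant closed embedding $\sG/\sP^\pm \hookrightarrow \SL(\sV)/\sQ^\pm$ by $g\sP^\pm \mapsto \tR(g)\sF^\pm$, well-defined since $\sP^\pm = \mathsf{Stab}_\sG(\sF^\pm)$ coincides with $\tR^{-1}(\sQ^\pm)$ by Proposition \ref{prop.hijk}, and this embedding carries transverse pairs to transverse pairs. Consequently, continuous equivariant transverse limit maps $\xi^\pm:\bdry\to\sG/\sP^\pm$ correspond bijectively to such limit maps for $\tR\circ\rho$ into $\SL(\sV)/\sQ^\pm$. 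I expect the representation-theoretic lemma above — identifying each simple root in the type of $\sP^+$ as a weight difference across a flag step — to be the main obstacle; once it is in hand, both directions of the proposition follow from the Kassel--Potrie characterization applied to $\rho$ in $\sG$ and to $\tR\circ\rho$ in $\SL(\sV)$ respectively.
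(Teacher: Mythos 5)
The paper offers no proof of this proposition at all: it is imported verbatim as Proposition 4.3 of Guichard--Wienhard \cite{GW2}, whose argument is a direct one with the dynamical definition --- the embedding $\sG/\sP^\pm\hookrightarrow\SL(\sV)/\sQ^\pm$ identifies $\sT(\sG/\sP^\pm)$ with a $\rho(\Gamma)$-invariant subbundle of the restriction of $\sT(\SL(\sV)/\sQ^\pm)$, and the latter decomposes along the image into $\tR$-weight subbundles whose contraction rates are compared directly. Your route through the Kassel--Potrie singular-value characterization is a genuinely different (and in principle viable, GGKW-style) strategy, and your treatment of the limit maps via $\sP^\pm=\mathsf{Stab}_\sG(\sF^\pm)=\sG\cap\sQ^\pm$ is fine. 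But there is a real gap in the direction you call ``immediate.''

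The claim that any weight difference $\lambda-\mu$ with $\lambda(X_0)>\mu(X_0)$ is a non-negative integer combination of simple roots in the type of $\sP^+$ is false for a general faithful irreducible $\tR$. For example, for $\mathrm{Sym}^3\R^3$ of $\SL_3(\R)$ with $X_0=\mathrm{diag}(x_1,x_2,x_3)$, $x_1>x_2>0>x_3$, the weights $\lambda=3\epsilon_2\in\Omega^+(X_0)$ and $\mu=\epsilon_3-\epsilon_2\in\Omega^-(X_0)$ satisfy $\lambda-\mu=-\alpha_1+2\alpha_2$, which has a negative coefficient; a uniform gap along the simple roots then gives no lower bound on $(\lambda-\mu)(\kappa(\rho(\gamma)))$, since $\alpha_1(\kappa(\rho(\gamma)))$ may grow at the maximal linear rate. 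This particular example is excluded only because it admits no vector that is simultaneously generic and symmetric --- i.e.\ precisely by the standing \emph{non-swinging} hypothesis of Remark \ref{rem.weight} --- but your argument never invokes that hypothesis, and establishing the needed positivity of weight differences across the strata of $X_0$ under it is a substantive piece of representation theory (this is what Smilga's analysis of such representations, and the ``compatibility'' conditions in Gu\'eritaud--Guichard--Kassel--Wienhard, are for), not an immediate observation. A secondary point you gloss over: the gap condition for $\tR\circ\rho$ at position $\dim\sV^+$ concerns the \emph{sorted} singular values, and the set of weights occupying the top $\dim\sV^+$ slots can a priori vary with $\gamma$; identifying it with $\Omega^+(X_0)$ for all large $|\gamma|$ is part of what must be proved. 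So the skeleton is sound, but both halves of the combinatorial equivalence --- not just the one you flag as the heart --- require the structural hypotheses on $\tR$ and a genuine argument.
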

	
	\begin{definition}\label{def.ano}
		We denote the space of all representations $\rho:\Gamma\to\sG$ which are Anosov with respect to $(\sP^+,\sP^-)$ by $\ano(\Gamma,\sG,\tR)$.
	\end{definition}
	
	\section{Affine deformations}\label{sec.2}
	\subsection{Neutral and neutralized sections}\label{sec.NS}
	
	We consider the space of transverse parabolic subspaces and using Proposition \ref{prop.hijk} deduce that the space of transverse parabolic subspaces can be identified with $\sG/(\sP^+\cap\sP^-)$.
	
	\begin{notation}
		Let $\pi_0:\sV\to\sV^0$ be the projection map with respect to the decomposition $\sV=\sV^+\oplus\sV^0\oplus\sV^-$.
	\end{notation}
	
	\begin{definition}\label{def.nu}
		Suppose $\nu:\sG\to\sHom(\sV,\sV)$ be such that $\nu(g)=g\pi_0$ for all $g\in\sG$. Then $\nu$ induces the following map:
		\[\nu:\sG/(\sP^+\cap\sP^-)\to\sHom(\sV,\sV),\]
		which we call the generalized neutral map. Moreover, for each $Y\in\sV^0$ we define $\nu_Y$, the neutral map with respect to $Y$, as follows: $\nu_Y([g])=\nu([g])Y$ for all $g\in\sG$. 
	\end{definition}
	\begin{remark}\label{rem.pm}
		Suppose $\sW$ is a parabolic space. Then by Proposition \ref{prop.hijk} there exists unique $\sW^\pm\subset \sW$ such that $\sW=g\sV^{\pm,0}$ if and only if $\sW^+=g\sV^\pm$.
	\end{remark}
	
	\begin{lemma}\label{lem.nu2}
		Suppose $Y\in\sV^0$ and $\nu_Y$ is a neutral map. Then for any three mutually transverse parabolic subspaces $\sV_i,\sV_j,\sV_k$ we have
		\begin{align*}
			[\nu_Y(\sV_i,\sV_k)-\nu_Y(\sV_i,\sV_j)]\in\sV_i^+ \text{ and }
			[\nu_Y(\sV_k,\sV_j)-\nu_Y(\sV_i,\sV_j)]\in\sV_j^-.
		\end{align*}
	\end{lemma}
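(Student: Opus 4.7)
The plan is to interpret $\nu_Y$ concretely via Proposition \ref{prop.hijk} and then reduce both inclusions to a single algebraic claim about how the parabolic subgroups $\sP^\pm$ act on the zero weight space $\sV^0$.

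First I would pick lifts $h_{i,j},h_{i,k},h_{k,j}\in\sG$ as supplied by Proposition \ref{prop.hijk}. Since $\pi_0Y=Y$ for $Y\in\sV^0$ and since the last assertion of that proposition guarantees every element of $\sP^+\cap\sP^-$ fixes $Y$, the expression $\nu_Y(\sV_a,\sV_b)=h_{a,b}\pi_0Y=h_{a,b}Y$ is independent of the choice of lift. Set $p\defeq h_{i,j}^{-1}h_{i,k}$; then $p\sV^{+,0}=h_{i,j}^{-1}\sV_i=\sV^{+,0}$, so $p\in\mathsf{Stab}_\sG(\sV^{+,0})=\sP^+$, and symmetrically $q\defeq h_{i,j}^{-1}h_{k,j}\in\sP^-$. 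The two inclusions to be proved are therefore equivalent to the algebraic claims
\[
pY-Y\in\sV^+\qquad\text{and}\qquad qY-Y\in\sV^-,
\]
because $h_{i,j}(pY-Y)\in h_{i,j}\sV^+=\sV_i^+$ and $h_{i,j}(qY-Y)\in h_{i,j}\sV^-=\sV_j^-$ by Remark \ref{rem.pm}.

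To establish these claims I would use the Levi decomposition $\sP^+=\sL^+\ltimes\mathsf{U}^+$. Since $\sG$ is split and $X_0$ is generic in the sense of Remark \ref{rem.weight}, the Levi factor $\sL^+$ equals $\sP^+\cap\sP^-$, so Proposition \ref{prop.hijk} gives $\ell Y=Y$ for every $\ell\in\sL^+$; moreover, as the centralizer of $X_0$, $\sL^+$ preserves every weight space of $\tR$, in particular $\sV^+$. For the unipotent part, $\fu^+=\bigoplus_{\alpha(X_0)>0}\fg^\alpha$ sends $\sV^0$ into weight spaces $\sV^\alpha$ with $\alpha(X_0)>0$, and by genericity of $X_0$ each such space either vanishes or is contained in $\sV^+$. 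Iterating through the exponential series yields $uY-Y\in\sV^+$ for every $u\in\mathsf{U}^+$, so writing $p=\ell u$ we get $pY-Y=\ell(uY-Y)\in\sV^+$. The argument for $qY-Y\in\sV^-$ is identical with the signs reversed.

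The main obstacle will be keeping the two distinct uses of the superscript $+$ straight (the positive weight summand of $\sV$ versus the distinguished subspace $\sW^+$ of a parabolic space from Remark \ref{rem.pm}) and invoking the non-swinging hypothesis at exactly the right spot, so that shifts of $\sV^0$ by positive root vectors land in $\sV^+$ rather than spilling back into $\sV^0$ or $\sV^-$; once that algebraic claim is secured, the remainder is pure bookkeeping via Proposition \ref{prop.hijk} and Remark \ref{rem.pm}.
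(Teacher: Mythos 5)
Your proof is correct, but it takes a genuinely different route from the paper. The paper argues dynamically: it conjugates the contraction $\exp(-nX_0)$ by $h_{i,j}$, notes that this one-parameter family fixes $\nu_Y(\sV_i,\sV_j)\in\sV_i\cap\sV_j$ while dragging $h_{i,j}^{-1}\sV_k$ to the attracting point $\sV^{-,0}$, and then uses continuity of $\nu_Y$ to conclude that the difference vector is contracted to $0$ and hence lies in the contracted subspace $h_{i,j}\sV^+=\sV_i^+$. You instead make the statement static and algebraic: after observing $\nu_Y(\sV_a,\sV_b)=h_{a,b}Y$, you reduce both inclusions to the claim that $\sP^+$ (resp.\ $\sP^-$) acts trivially on $\sV^{+,0}/\sV^+$ (resp.\ $\sV^{-,0}/\sV^-$), and prove that via the Levi decomposition $\sP^+=(\sP^+\cap\sP^-)\ltimes\mathsf{U}^+$ together with the weight-shifting property $d_e\tR(\fg^\alpha)\sV^\lambda\subset\sV^{\lambda+\alpha}$. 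Both arguments are sound. The paper's is shorter and reuses the attractor picture that recurs throughout the article, at the cost of leaving the final ``contracted $\Rightarrow$ lies in $\sV_i^+$'' step implicit; yours isolates the structural reason for the lemma (triviality of the parabolic action on the neutral graded piece), needs no limits or continuity of $\nu_Y$, but imports the group-level Levi decomposition, which the paper never explicitly invokes (though it is standard and consistent with the cited results of Smilga). Two small points of hygiene: the identity $\sL^+=\sP^+\cap\sP^-$ is the standard fact about opposite parabolics sharing a Levi factor and does not really need genericity of $X_0$ or splitness; and genericity is likewise not needed for $\fu^+\cdot\sV^0\subset\sV^+$, since any weight $\lambda$ occurring in $\sV^0$ satisfies $\lambda(X_0)=0$, so $(\lambda+\alpha)(X_0)=\alpha(X_0)>0$ regardless of whether $\lambda=0$.
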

	\begin{proof}
		We recall that $\sV^{\pm,0}$ are respectively the attracting and repelling fixed points for the action of $g:=\exp(X_0)$ on $\sG/(\sP^+\cap\sP^-)$. Hence, for any $\sW$ transverse to both $\sV^{\pm,0}$ we have $\lim_{n\to\infty}g^{-n}\sW=\sV^{-,0}$.	Suppose $(\sV_i,\sV_j)=h_{i,j}(\sV^{+,0},\sV^{-,0})$. Then $h_{i,j}^{-1}\sV_k$ is transverse to both $\sV^{\pm,0}$. Moreover, $\nu_Y$ is a continuous map. Hence,
		\[\lim_{n\to\infty}h_{i,j}g^{-n}h_{i,j}^{-1}\nu_Y(V_i,V_k)=\nu_Y(V_i,V_j)\]
		and our result follows.
	\end{proof}
	
	Suppose $\rho\in\ano(\Gamma,\sG,\tR)$ with limit map $\xi_\rho$. We consider the bundle $\cflow\times\sV$ over $\cflow$ and abuse notation to define the flow $\phi_t(p,Y):=(\phi_tp,Y)$. Moreover, for the following action of $\Gamma$: $\gamma(x,Y):=(\gamma x, \rho(\gamma)Y)$, we consider the quotient bundle 
	\[\cV:=\Gamma\backslash(\cflow\times\sV)\text{ over }\flow.\] 
	The flow $\phi$ descends to a flow on $\cV$ which we again denote by $\phi$. 
	We observe that for $p\in\cflow$ there exists some $g_p\in\sG$ such that $\xi^\pm_\rho(p_\pm)=g_p\sQ^\pm$. Hence, the limit map $\xi_\rho$ induces a splitting of the bundle $\cV$ as follows:
	\[\cV:=\cV^+\oplus\cV^0\oplus\cV^-,\]
	where the fiber over $p\in\cflow$ is denoted by $(p, \sV^+_{p_+}, \sV^0_p, \sV^-_{p_-})$. We observe that $(p, \sV^+_{p_+}, \sV^0_p, \sV^-_{p_-})=(p, g_p\sV^+, g_p\sV^0, g_p\sV^-)$ and $\sV^\pm_{p_\pm}\oplus\sV^0_p=\sV^{\pm,0}_{p_\pm}$.
	
	We observe that for any $Y\in\sV^0$ we have $\nu_Y(hg)=h\nu_Y(g)$ for all $g,h\in\sG$. Hence, it follows that $\rho(\gamma)\nu_Y\circ\xi(\gamma_+,\gamma_-) =\nu_Y\circ\xi(\gamma_+,\gamma_-)$.	We abuse notation and denote $\nu_Y\circ\xi$ by $\nu_Y$. These maps induce the following sections which we again denote using the same notation: 
	\[\nu_Y:\flow\to\cV^0.\] 
	We call these the \textit{neutral sections}. We observe that the neutral sections induced by the standard neutral maps spans the space of sections of $\cV^0$. 
	
	\begin{remark}
		Suppose $h\in\sG$ satisfy $h^{-1}\rho(\gamma)_\tth h=\exp(\tJd_g)$. Then using Propositions 4.16 and 4.21 of Smilga \cite{Smilga3} we obtain that $\nu(\gamma_+,\gamma_-)=h\pi_0$.
	\end{remark}
	
	Now we consider the bundle $\cflow\times\sV \text{ over } \cflow$	and introduce the affine analogues of the bundle $\cV$ over $\flow$. Suppose $u:\Gamma\to\sV$ be a cocycle at $\rho$ i.e. $(\rho,u):\Gamma\to\sG\ltimes\sV$ is an injective homomorphism.
	Then for the following action of $\Gamma$: $\gamma(x,Y):=(\gamma x, (\rho(\gamma),u(\gamma))Y)$, we consider the quotient bundle 
	\[\acV:=\Gamma\backslash(\cflow\times\sV)\text{ over }\flow.\] 
	The flow $\phi$ descends to a flow on $\acV$ which we again denote by $\phi$.
	We can use a partition of unity argument to construct H\"older continuous sections $\sigma: \flow \to \acV$ which are differentiable along the flow lines (see \cite{GT} for more details on this). Let $\Tilde{\sigma}$ be the lift of $\sigma$. We define
	\[{\nabla}_\phi\Tilde{\sigma} (p):=\left.\frac{\partial}{\partial t}\right|_{t=0}\phi_{-t}\Tilde{\sigma}(\phi_tp).\]
	Let $\nabla^\pm_\phi\tilde{\sigma} (p)$ and $\nabla^0_\phi\tilde{\sigma} (p)$ denote the projections of ${\nabla}_\phi\tilde{\sigma} (p)$ respectively on $\sV^\pm_{p_\pm}$ and $\sV^0_p$. Hence, we obtain sections $\nabla_\phi\sigma:\flow\to\cV$, $\nabla_\phi^\pm\sigma:\flow\to\cV^\pm$ and $\nabla_\phi^0\sigma:\flow\to\cV^0$ such that $\nabla_\phi\sigma=\nabla^+_\phi\sigma+\nabla^0_\phi\sigma+\nabla^-_\phi\sigma$.
	
	\begin{lemma}\label{lem.neutralized1}
		Let $\sigma: \flow \to \acV$ be a H\"older continuous section which is differentiable along the flow lines. Then there exist $\sigma_0: \flow \to \acV$ such that
		\[\nabla_\phi\sigma_0=\nabla^0_\phi\sigma .\]
	\end{lemma}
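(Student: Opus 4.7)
The plan is to correct $\sigma$ by adding sections that annihilate the $\cV^\pm$ components of its covariant derivative. Specifically, I look for $\sigma_0$ of the form $\sigma_0 = \sigma + \tau^+ + \tau^-$, where $\tau^\pm : \flow \to \cV^\pm$ are sections of the linear subbundles chosen so that $\nabla_\phi \tau^\pm = -\nabla_\phi^\pm \sigma$. The subbundles $\cV^\pm$ are flow-invariant, since their fibers $\sV^\pm_{p_\pm}$ depend only on $p_\pm$ and $\phi_t$ preserves $p_\pm$; therefore $\nabla_\phi \tau^\pm$ automatically lies in $\cV^\pm$, and the two cohomological equations decouple. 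Granted solvability, summing them gives $\nabla_\phi \sigma_0 = \nabla_\phi \sigma - \nabla_\phi^+ \sigma - \nabla_\phi^- \sigma = \nabla_\phi^0 \sigma$, as required. Note that $\sigma_0$ is still a section of the affine bundle $\acV$, because adding a section of the vector bundle $\cV$ to a section of $\acV$ yields again a section of $\acV$.

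To solve the two equations, I would work on the lift $\cflow \times \sV$ and set
\[
\tilde\tau^+(p) \defeq -\int_0^\infty \widetilde{\nabla^+_\phi\sigma}(\phi_{-t} p)\, dt, \qquad \tilde\tau^-(p) \defeq \int_0^\infty \widetilde{\nabla^-_\phi\sigma}(\phi_{t} p)\, dt.
\]
A direct change of variables under the integral confirms that $\nabla_\phi \tilde\tau^\pm = -\widetilde{\nabla^\pm_\phi\sigma}$, while $\Gamma$-equivariance of $\tilde\tau^\pm$ follows from the equivariance of $\nabla^\pm_\phi\sigma$ and the fact that the $\Gamma$-action commutes with $\phi$.

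Convergence is where the Anosov hypothesis enters. Because $p_+$ is invariant under the flow, each integrand $\nabla_\phi^+\sigma(\phi_{-t} p)$ belongs to the same fixed finite-dimensional space $\sV^+_{p_+}$, and is uniformly bounded in its bundle metric $\|\cdot\|_{\phi_{-t} p}$ by compactness of $\flow$ and continuity of $\nabla_\phi^+\sigma$. Applying the Anosov contraction estimate at $q = \phi_{-t}p$ with time $t$ yields
\[
\|\nabla_\phi^+\sigma(\phi_{-t}p)\|_p \,\leq\, C e^{-ct}\, \|\nabla_\phi^+\sigma(\phi_{-t}p)\|_{\phi_{-t}p} \,\leq\, C M e^{-ct},
\]
so the integral converges absolutely in $\sV^+_{p_+}$. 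The symmetric argument using backward contraction of $\cV^-$ handles $\tilde\tau^-$.

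The main technical point is then to check that $\tau^\pm$ inherit H\"older continuity and differentiability along flow lines from $\sigma$. Differentiability along flow lines is immediate from the integral definitions by the fundamental theorem of calculus. H\"older continuity uses the standard splitting trick: for nearby points $p,q$, split the defining integral at a time $T$ of order $\log d(p,q)^{-1}$; the initial segment is controlled using H\"older regularity of $\nabla^\pm_\phi\sigma$ together with Lipschitz behavior of $\phi_t$, while the tail is controlled by the exponential decay above. I expect this regularity verification to be the main bookkeeping step, but it is entirely routine for Anosov flows and presents no genuine obstacle once the decomposition above has been set up.
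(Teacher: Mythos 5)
Your overall strategy is the paper's own: correct $\sigma$ by sections $\tau^\pm$ of $\cV^\pm$ obtained by integrating $\nabla^\pm_\phi\sigma$ along half-orbits so that $\nabla_\phi\tau^\pm=-\nabla^\pm_\phi\sigma$. However, you have paired each component with the wrong half-orbit, and as written both of your integrals diverge. In this paper's conventions $\cV^+$ is the \emph{expanding} (unstable) subbundle for the forward flow and $\cV^-$ is the contracting (stable) one: see Lemma \ref{lem.stableaffineplane}, where $\|gY^+\|>\lambda\|Y^+\|$ for $Y^+\in\sV^+$ and $g$ of type $X_0$; the proof of Lemma \ref{lem.neutralized2}, where boundedness of $\frac1n\rho(\gamma)^nZ$ forces $Z\in\sV^{-,0}_{\gamma_-}$; and Proposition \ref{prop.partialaffineano}, where it is $g_p\sV^{\mp}$ --- not $g_p\sV^{\pm}$ --- that sits inside $\sT_{\xi_u^\pm(p_\pm)}\ASL/\asQ^\pm$ and hence contracts under $\phi_{\pm t}$. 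Your key estimate $\|\nabla^+_\phi\sigma(\phi_{-t}p)\|_p\le Ce^{-ct}\|\nabla^+_\phi\sigma(\phi_{-t}p)\|_{\phi_{-t}p}$ applies the Anosov inequality as if it held for vectors of $\cV^+\subset\cV$; it holds for tangent vectors to $\sG/\sP^+$, but for $w\in\sV^+_{p_+}$ the correct inequality is $\|w\|_{\phi_{-t}p}\le Ce^{-ct}\|w\|_{p}$, which gives the \emph{lower} bound $\|w\|_p\ge C^{-1}e^{ct}\|w\|_{\phi_{-t}p}$. So the integrand of your $\tilde\tau^+$, measured in the fiber over $p$, grows exponentially unless $\nabla^+_\phi\sigma$ vanishes, and symmetrically for your $\tilde\tau^-$.

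The fix is to swap the two half-orbits, which is exactly what the paper does: set
\[\sigma_0(p)=\sigma(p)-\int_0^\infty\phi_t\bigl(\nabla^-_\phi\sigma(\phi_{-t}p)\bigr)\,dt+\int_0^\infty\phi_{-t}\bigl(\nabla^+_\phi\sigma(\phi_{t}p)\bigr)\,dt,\]
i.e.\ integrate the stable component over the backward orbit (transporting it forward to $p$ contracts it) and the unstable component over the forward orbit (transporting it backward to $p$ contracts it). The same change of variables you performed still yields $\nabla_\phi\sigma_0=\nabla_\phi\sigma-\nabla^-_\phi\sigma-\nabla^+_\phi\sigma=\nabla^0_\phi\sigma$, and the rest of your discussion (equivariance, $\sigma_0$ remaining a section of $\acV$, H\"older regularity via splitting the integral at $T\sim\log d(p,q)^{-1}$) goes through unchanged. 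This corrected pairing is the continuous analogue of the convergent series $\sum_{n\ge0}g^nY^-$ and $\sum_{n\ge0}g^{-n}Y^+$ in Lemma \ref{lem.stableaffineplane}.
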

	
	\begin{proof}
		We observe that $\nabla_\phi \sigma$ and $\nabla^\pm_\phi \sigma$ are all $(\rho,u)(\Gamma)$-equivariant. We use the Anosov property and argue as in Proposition 5.2 of Ghosh--Treib \cite{GT} to obtain that the following two integrals are well defined:
		\[\int_0^\infty \phi_t\left( \nabla^-_\phi \sigma(\phi_{- t}p)\right) dt \text{ , } \int_0^\infty \phi_{-t}\left( \nabla^+_\phi \sigma(\phi_{t}p) \right) dt.\]
		Hence, the following map is well defined:
		\begin{align*}
			\sigma_0(p):= \sigma(p) &- \int_0^\infty \phi_t\left( \nabla^-_\phi \sigma(\phi_{- t}p)\right) dt + \int_0^\infty  \phi_{-t}\left( \nabla^+_\phi \sigma(\phi_{t}p) \right) dt.
		\end{align*}
		Now a routine computation implies that
		\[\nabla_\phi \sigma_0(p) = \nabla_\phi \sigma(p)-\nabla^-_\phi \sigma(p) -\nabla^+_\phi \sigma(p). \]
		Our result follows.
	\end{proof}
	\begin{definition}
		Any section ${\sigma}:\flow\to\acV$ for which $\nabla_\phi\sigma=\nabla^0_\phi\sigma$ is called a neutralized section. 
	\end{definition}

	\begin{lemma}\label{lem.neutralized2}
		Suppose $\sigma: \flow \to \acV$ is a neutralized section and $\tilde{\sigma}:\cflow\to\sV$ be the map induced by the lift of $\sigma$. Then  
		\[\tilde{\sigma}(a,b,0)-\tilde{\sigma}(a,c,0)\in\sV^{+,0}_a \text{ and } \tilde{\sigma}(a,b,0)-\tilde{\sigma}(d,b,0)\in\sV^{-,0}_b.\]
	\end{lemma}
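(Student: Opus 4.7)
The plan is to prove the first containment $\tilde{\sigma}(a,b,0)-\tilde{\sigma}(a,c,0)\in\sV^{+,0}_a$; the second follows by a symmetric argument that reverses the flow direction and exchanges the roles of $\cV^+$ and $\cV^-$. Set $F(t):=\tilde{\sigma}(a,b,t)-\tilde{\sigma}(a,c,t)\in\sV$. Because $\sigma$ is neutralized, $\nabla_\phi\tilde{\sigma}$ takes values in $\cV^0$ pointwise, and since $\sV^0_{(a,b)}\subset\sV^{+,0}_a$ and $\sV^0_{(a,c)}\subset\sV^{+,0}_a$ (the neutral piece of any splitting based at $a$ sits inside the filtration $\sV^{+,0}_a$ independently of the transverse endpoint), integrating the derivative of $F$ gives $F(t)-F(0)\in\sV^{+,0}_a$ for every $t$. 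Using the transverse decomposition $\sV=\sV^{+,0}_a\oplus\sV^-_c$ from Proposition~\ref{prop.hijk}, and writing $\pi^-_c$ for the associated projection onto $\sV^-_c$, we deduce that $w:=\pi^-_c F(t)\in\sV^-_c$ is independent of $t$. The problem therefore reduces to proving $w=0$.

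For a lower bound on $w$, I invoke Proposition~\ref{prop.GW}: the composition $\tR\circ\rho$ is $(\sQ^+,\sQ^-)$-Anosov, so the equivariant Euclidean metrics $\|\cdot\|_p$ on $\cV^-$ satisfy $\|v\|_{\phi_s p}\geq C^{-1}e^{cs}\|v\|_p$ for all $v\in\cV^-_p$ and $s\geq 0$. Specialising at $p=(a,c,0)$ yields $\|w\|_{(a,c,s)}\geq C^{-1}e^{cs}\|w\|_{(a,c,0)}$.

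For a matching upper bound I exploit the strong stable contraction of the Gromov flow. Since $(a,b,0)$ and $(a,c,0)$ share the forward endpoint $a$, they lie on the same weak stable leaf, so there exists a horospherical shift $\tau\in\R$ such that $(a,b,0)$ and $(a,c,\tau)$ lie on the \emph{same strong stable leaf}; consequently $d_{\cflow}\bigl((a,b,t),(a,c,t+\tau)\bigr)\to 0$ as $t\to+\infty$. Choose $\gamma_t\in\Gamma$ with $\gamma_t^{-1}(a,b,t)$ in a fixed compact fundamental domain $K\subset\cflow$; isometricity of the $\Gamma$-action keeps $\gamma_t^{-1}(a,c,t+\tau)$ inside a bounded neighbourhood $K'$ of $K$ and at vanishing distance from $\gamma_t^{-1}(a,b,t)$. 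Using the \emph{same} $\gamma_t$ for both terms, the affine translations $u(\gamma_t)$ cancel under $(\rho,u)$-equivariance, giving
\[
G(t):=\tilde{\sigma}(a,b,t)-\tilde{\sigma}(a,c,t+\tau)=\rho(\gamma_t)\bigl[\tilde{\sigma}(\gamma_t^{-1}(a,b,t))-\tilde{\sigma}(\gamma_t^{-1}(a,c,t+\tau))\bigr].
\]
Uniform continuity of $\tilde{\sigma}$ on the compact set $K'$ together with $\Gamma$-equivariance of the bundle norm force $\|G(t)\|_{(a,c,t+\tau)}\to 0$.

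A second application of the neutralized condition gives $\tilde{\sigma}(a,c,t+\tau)-\tilde{\sigma}(a,c,0)\in\sV^0_{(a,c)}\subset\sV^{+,0}_a$, so $\pi^-_c G(t)=w$ still; combined with a uniform bound on the operator norm of $\pi^-_c$ (continuity of the splitting and $\Gamma$-equivariance on the compact quotient $\flow$) this yields $\|w\|_{(a,c,t+\tau)}\leq K''\|G(t)\|_{(a,c,t+\tau)}\to 0$. Playing this against the Anosov lower bound forces $\|w\|_{(a,c,0)}=0$, hence $w=0$, which proves $F(0)\in\sV^{+,0}_a$. The hard part will be the third step above: in order to make the affine translations $u(\gamma_t)$ cancel one must transport both points by a \emph{single} group element, and this is only possible after reparametrising by the horospherical shift $\tau$ so that the two lifts sit on the same strong stable leaf—precisely the distinction between the weak and strong stable structures of the Gromov flow.
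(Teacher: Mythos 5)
Your architecture is genuinely different from the paper's. The paper fixes a single infinite-order $\gamma$, applies $(\rho,u)(\gamma)^n$ to the difference $\tilde{\sigma}(a,\gamma_-,0)-\tilde{\sigma}(\gamma_+,\gamma_-,0)$ (the translation parts cancel in a difference of two section values), uses the neutralized condition plus $\gamma^na\to\gamma_+$ to bound the image linearly in $n$, and concludes that the difference can have no component in the expanding eigenspace $\sV^+_{\gamma_+}$ of $\rho(\gamma)$, i.e.\ lies in $\sV^{-,0}_{\gamma_-}$; continuity and density of pairs of fixed points finish the proof. That argument never touches the flow metrics --- it only uses the eigenvalue gap of one group element. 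Your argument runs in continuous time and rests on two flow-metric estimates that must point in the \emph{same} time direction, and that is where it breaks.

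Concretely: your lower bound ``$\|v\|_{\phi_s p}\geq C^{-1}e^{cs}\|v\|_p$ for $v\in\cV^-_p$, $s\geq0$'' does not follow from Proposition \ref{prop.GW}, which only controls metrics on tangent spaces to flag varieties. The one bridge the paper provides to the linear bundle is the identification $\sT_{\xi_u^+(p_+)}\ASL/\asQ^+=\sT_{\xi^+(p_+)}\SL(\sV)/\sQ^+\oplus\sV^-_{p_-}$ in Proposition \ref{prop.partialaffineano}, and since the (partial affine) Anosov condition \emph{contracts} the ``$+$'' tangent spaces in forward time, that identification yields $\|v\|_{\phi_s p}\leq Ce^{-cs}\|v\|_p$ for $v\in\sV^-_{p_-}$ --- the opposite inequality. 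With that sign both of your estimates assert $\|w\|_{(a,c,s)}\to0$, which holds for \emph{every} $w\in\sV^-_c$ and produces no contradiction; and you cannot simply reverse time, because in backward time the orbits $(a,b,t)$ and $(a,c,t+\tau)$ separate (their backward endpoints $b\neq c$ are distinct), destroying your upper bound. To salvage the route you would have to establish forward-time expansion of $\cV^-$ directly and reconcile it with the convergence of the integrals in Lemma \ref{lem.neutralized1}, which presupposes the opposite convention. A secondary gap: the horospherical shift $\tau$ with $d\bigl((a,b,t),(a,c,t+\tau)\bigr)\to0$ is a strong-stable-leaf property of the Gromov flow space going beyond what the paper records about its metric (isometric $\Gamma$-action, Lipschitz flow, quasi-isometrically embedded orbits) and would need separate justification. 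The paper's discrete argument sidesteps both difficulties.
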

	\begin{proof}
		As $\tilde{\sigma}$ is $(\rho,u)(\Gamma)$-equivariant, there exist $t_n,s_n\in\R$ such that 
		\[\rho(\gamma)^n[\tilde{\sigma}(a,\gamma_-,0)-\tilde{\sigma}(\gamma_+,\gamma_-,0)]=[\tilde{\sigma}(\gamma^na,\gamma_-,t_n)-\tilde{\sigma}(\gamma_+,\gamma_-,s_n)].\] 
		As $\sigma$ is neutralized, there exists some constant $C>0$ such that 
		\begin{align*}
			&\|\tilde{\sigma}(\gamma^na,\gamma_-,t_n)-\tilde{\sigma}(\gamma^na,\gamma_-,0)\|\leq Ct_n,\\
			&\|\tilde{\sigma}(\gamma_+,\gamma_-,s_n)-\tilde{\sigma}(\gamma_+,\gamma_-,0)\|\leq Cs_n.
		\end{align*}
		Moreover, as $[\tilde{\sigma}(\gamma^na,\gamma_-,0)-\tilde{\sigma}(\gamma_+,\gamma_-,0)]$
		stays bounded and $t_n/n$ and $s_n/n$ stays bounded we obtain that 
		\[\frac{1}{n}\rho(\gamma)^n[\tilde{\sigma}(a,\gamma_-,0)-\tilde{\sigma}(\gamma_+,\gamma_-,0)]\]
		stays bounded. It follows that $[\tilde{\sigma}(a,\gamma_-,0)-\tilde{\sigma}(\gamma_+,\gamma_-,0)]\in \sV^{-,0}_{\gamma_-}$. Now using continuity we deduce that $[\tilde{\sigma}(a,b,0)-\tilde{\sigma}(d,b,0)]\in\sV^{-,0}_b$. Similarly, we obtain that $[\tilde{\sigma}(a,b,0)-\tilde{\sigma}(a,c,0)]\in\sV^{+,0}_a$.
	\end{proof}

	\subsection{Partial affine Anosov representations}\label{sec.PAA}
	The essential idea behind this section is the following simple observation: 	
	
	\begin{lemma}\label{lem.stableaffineplane}
		Let $g\in{\sG}$ be of the same type as $X_0$ with respect to $\tR$ and $\sV^0_g$ be the unit eigenspace of $g$. Then for any $Y\in{\sV}$ there exists a unique affine subspace $\sA_{g,Y}$ of $\sV$ which is parallel to  such that $(g,Y)\sA_{g,Y}=\sA_{g,Y}$.
	\end{lemma}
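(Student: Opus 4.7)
The plan is to translate the invariance condition into a linear equation that is uniquely solvable off the unit eigenspace. Because $g$ is of the same type as $X_0$, the hyperbolic part $g_\tth$ in its Jordan decomposition is conjugate to $\exp(\tJd_g)$ with $\tJd_g \sim X_0$, and this produces a decomposition $\sV = \sV^+_g \oplus \sV^0_g \oplus \sV^-_g$ analogous to $\sV^+ \oplus \sV^0 \oplus \sV^-$, obtained by conjugating the standard splitting. Since $g_\te, g_\tth, g_\tu$ commute, the elliptic and unipotent factors preserve this splitting, so $g$ does as well. On $\sV^\pm_g$ the eigenvalues of $g$ have modulus strictly greater than, or less than, $1$ (the hyperbolic factor contributes the modulus, while $g_\te g_\tu$ has unit modulus), hence $(g - \mathrm{id})$ restricts to an invertible operator on $\sV^+_g \oplus \sV^-_g$.

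Any affine subspace parallel to $\sV^0_g$ has the form $\sA_{v_0} := v_0 + \sV^0_g$, with $\sA_{v_0} = \sA_{v_0'}$ if and only if $v_0 - v_0' \in \sV^0_g$; hence I may take the base point $v_0 \in \sV^+_g \oplus \sV^-_g$. Because $g\sV^0_g = \sV^0_g$, the condition $(g,Y)\sA_{v_0} = \sA_{v_0}$ is equivalent to
\[
(g - \mathrm{id})\,v_0 + Y \in \sV^0_g.
\]
Decomposing $Y = Y^+ + Y^0 + Y^-$ and $v_0 = v^+ + v^-$ along the splitting, and using that $g - \mathrm{id}$ respects it, this collapses to the two independent equations
\[
(g - \mathrm{id})\big|_{\sV^+_g}\, v^+ = -Y^+, \qquad (g - \mathrm{id})\big|_{\sV^-_g}\, v^- = -Y^-,
\]
while the component $Y^0 \in \sV^0_g$ is automatically absorbed.

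By the invertibility established in the first paragraph, each of these equations has a unique solution, yielding a unique $v_0 \in \sV^+_g \oplus \sV^-_g$ and thus the desired unique invariant affine subspace $\sA_{g,Y} = v_0 + \sV^0_g$. The only non-cosmetic step is the verification that $\sV^0_g$ is genuinely the unit eigenspace of $g$ and admits a $g$-invariant complement on which $g - \mathrm{id}$ is invertible; this is exactly the content of the first paragraph, and relies essentially on the commutativity of the Jordan factors together with the type condition $\tJd_g \sim X_0$. The rest is a routine linear algebra computation, so I do not anticipate any real obstacle.
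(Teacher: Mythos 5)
Your proof is correct and follows essentially the same route as the paper: both pass to the $g$-invariant splitting $\sV=\sV^+_g\oplus\sV^0_g\oplus\sV^-_g$ and reduce the invariance condition to solving $(g-\mathrm{id})v^{\pm}=-Y^{\pm}$ on the expanding and contracting directions, with uniqueness coming from the fact that the fixed vectors of $g$ lie in $\sV^0_g$. The only difference is presentational: the paper writes the solution as the convergent series $\sum_{n\geq 0}g^nY^-$ and $-g^{-1}\sum_{n\geq 0}g^{-n}Y^+$, which is precisely the Neumann-series expansion of the inverse operator you invoke via the eigenvalue-modulus argument.
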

	\begin{proof}
		We consider some Euclidean norm $\|\cdot\|$ on $\sV$. As $g\in{\sG}$ is of the same type as $X_0$ with respect to $\tR$, we observe that there exists linear subspaces $\sV^\pm_g$ and $\sV^0_g$ of $\sV$ which are stabilized by $g$. Moreover, for any $Y^\pm\in\sV^\pm$, $Y^0\in\sV^0$ we have $\|gY^+\|>\lambda\|Y^+\|$, $\|gY^-\|<\lambda^{-1}\|Y^+\|$ for some $\lambda>1$ and $gY^0=Y^0$. Suppose $Y\in\sV$. We denote its projections on $\sV_g^\pm$ (resp. $\sV_g^0$) by $Y^\pm$ (resp. $Y^0$) and observe that
		\begin{align*}
			\left\|\left(\sum_{n=0}^\infty g^n\right)Y^-\right\|&\leq\left(\sum_{n=0}^\infty \|g^nY^-\|\right)\leq\left(\sum_{n=0}^\infty \lambda^{-n}\right)\|Y^-\|<\infty,\\
			\left\|\left(\sum_{n=0}^\infty g^{-n}\right)Y^+\right\|&\leq\left(\sum_{n=0}^\infty \|g^{-n}Y^+\|\right)\leq\left(\sum_{n=0}^\infty \lambda^{-n}\right)\|Y^+\|<\infty.
		\end{align*}
		Hence, the following expression is well defined:
		\[\sA_{g,Y}:=\sV_g^0+\left(\sum_{n=0}^\infty g^n\right)Y^--g^{-1}\left(\sum_{n=0}^\infty g^{-n}\right)Y^+.\]
		We observe that $(g,Y)\sA_{g,Y}=\sA_{g,Y}$. 
		
		Suppose $\sA_1$ and $\sA_2$ are two such subspaces which are parallel to $\sV^0_g$. Then $\sA_1=\sA_2+Z$ for some $Z\in\sV$ and $gZ=Z$. Therefore, $Z\in\sV^0_g$ and it implies that $\sA_1=\sA_2+Z=\sA_2$, proving uniqueness.
	\end{proof}
	
	Now we introduce appropriate notions to generalize the above result and frame it in the context of Anosov representations.
	
	\begin{definition}\label{def.affpara}
		Suppose $(\sV_i,\sV_j)$ is a transverse pair of parabolic subspaces of $\sV$ and $(\sA_i,\sA_j)$ are two affine subspaces of $\sV$ such that $\sV_i$ is respectively parallel to $\sA_i$. We call such spaces \textit{affine parabolic spaces} and two affine parabolic spaces $\sA_i$ and $\sA_j$ to be \textit{transverse} to each other if and only if their corresponding vector spaces $\sV_i$ and $\sV_j$ are transverse to each other.
	\end{definition}	
	
	We simplify our notations and denote $\SL(\sV)\ltimes\sV$ by $\ASL$ and $\sG\ltimes_\tR\sV$ by $\AG$. Moreover, we denote the stabilizer of the affine space $\sV^{\pm,0}$ under the action of $\AG$ (resp. $\ASL$) on $\sV$ by $\asP^\pm$ (resp. $\asQ^\pm$). We call these subgroups a pair of \textit{opposite pseudo-parabolic} subgroups respectively of $\AG$ and $\ASL$. Let $\cY_\tR\subset\AG/\asP^+\times\AG/\asP^-$ be the space of all pairs of transverse affine parabolic subspaces of $\sV$. We use Proposition \ref{prop.hijk} and obtain that the left action of $\AG$ on $\AG/\asP^+\times\AG/\asP^-$ is transitive on $\cY_\tR$. Hence, we have 
	\[\cY_\tR=\{\left((g,X)\sV^{+,0},(g,X)\sV^{-,0}\right)\mid (g,X)\in\AG\}\cong\AG/(\asP^+\cap\asP^-).\]
	Transversality imply that $\cY_\tR$ is open in $\AG/\asP^+\times\AG/\asP^-$. Therefore,
	\[\sT_{(g,X)\left(\asP^+,\asP^-\right)}\cY_\tR = \sT_{(g,X)\asP^+}(\AG/\asP^+) \oplus \sT_{(g,X)\asP^-}(\AG/\asP^-).\]
	We again use Proposition \ref{prop.hijk} and identify $\cY_\tR$  naturally with a subset of
	\[\cY:=\{\left((g\sV^+,(g,X)\sV^{+,0}),(g\sV^-,(g,X)\sV^{-,0})\right)\mid (g,X)\in\ASL\}.\]	
	Similarly, we can identify $\cY$ with an open subset of $\ASL/\asQ^+\times\ASL/\asQ^-$ to obtain $\cY\cong\ASL/(\asQ^+\cap\asQ^-)$. Hence,
	\[\sT_{(g,X)\left(\asQ^+,\asQ^-\right)}\cY = \sT_{(g,X)\asQ^+}(\ASL/\asQ^+) \oplus \sT_{(g,X)\asQ^-}(\ASL/\asQ^-).\]
	
	\begin{definition}
		Let $\Gamma$ be a hyperbolic group, $\sG$ be a split semisimpe Lie group and $\tR:\sG\to\SL(\sV)$ be a faithful irreducible representation. Let $\asP^\pm$ (resp. $\asQ^\pm$) be a pair of opposite pseudo-parabolic subgroups of $\AG$ (resp. $\ASL$) and $(\rho,u):\Gamma\to\AG\subset\ASL$ be an injective homomorphism. Then $(\rho,u)$ is called partial affine Anosov in $\AG$ (resp. $\ASL$) with respect to the pair $\asP^\pm$ (resp. $\asQ^\pm$) if the following conditions hold:
		\begin{enumerate}
			\item There exist continuous, injective, $(\rho,u)(\Gamma)$-equivariant limit maps
			\[\xi^\pm:\bdry\rightarrow\AG/\asP^\pm\subset\ASL/\asQ^\pm\]
			such that $\xi(p)\defeq(\xi^+(p_+),\xi^-(p_-))\in\cY$ for any                     $p=(p_+,p_-,t)\in\cflow$.
			\item There exist positive constants $C,c$ and a continuous collection of         $\rho(\Gamma)$-equivariant Euclidean metrics $\|\cdot\|_p$ on                     $\sT_{\xi(p)}\cY_\tR$ (resp. $\sT_{\xi(p)}\cY$) for $p\in\cflow$ such that 
			\begin{align*}
				\|v^\pm\|_{\phi_{\pm t}p}\leqslant Ce^{-ct}\|v^\pm\|_p
			\end{align*}
			for all $v^\pm\in\sT_{\xi^\pm(p_\pm)}\AG/\asP^\pm$ (resp. $v^\pm\in\sT_{\xi^\pm(p_\pm)}\ASL/\asQ^\pm$) and for all $t\geqslant 0$.
		\end{enumerate}
	\end{definition}
	
	We note that similar notion as that of a partial affine Anosov representation has also been independently introduced by Kassel--Smilga in \cite{KaSm}.
	
	\begin{remark}
		Suppose $(\rho,u)$ is partially affine Anosov. We observe that $\SL(\sV)/\sQ^\pm\subset\ASL/\asQ^\pm$. Hence, we deduce that $\rho\in\ano(\Gamma,\sG,\tR)$.
	\end{remark}
	
	\begin{proposition}\label{prop.partialaffineano}
		Let $\tR:\sG\to\SL(\sV)$ be an injective homomorphism, $\rho\in{\ano(\Gamma,\sG,\tR)}$ and $u$ be a $\sV$ valued cocycle with respect to $\rho$. Then $(\rho,u)$ is a {partial affine Anosov} representation in $\AG$ (resp. $\ASL$) with respect to $\asP^\pm$ (resp. $\asQ^\pm$).
	\end{proposition}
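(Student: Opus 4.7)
The plan is to enrich the linear limit maps of $\rho\in\ano(\Gamma,\sG,\tR)$ into affine limit maps by means of the neutralized section machinery of Subsection \ref{sec.NS}. Beginning from a H\"older continuous section of $\acV$ differentiable along the flow (available by partition of unity), Lemma \ref{lem.neutralized1} produces a neutralized section $\sigma_0:\flow\to\acV$ whose $(\rho,u)$-equivariant lift I denote $\tilde{\sigma}_0:\cflow\to\sV$. For $(a,b)\in\partial_\infty\Gamma^{(2)}$, I would then set
\[\xi^+(a):=\tilde{\sigma}_0(a,b,0)+\sV^{+,0}_a\quad\text{and}\quad \xi^-(b):=\tilde{\sigma}_0(a,b,0)+\sV^{-,0}_b,\]
viewed as affine parabolic subspaces and hence as elements of $\AG/\asP^\pm\subset\ASL/\asQ^\pm$. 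Independence from $b$ (resp.\ from $a$) follows from Lemma \ref{lem.neutralized2}, which says that changing the auxiliary endpoint shifts the base point by a vector in the corresponding linear parabolic subspace; independence from the flow parameter $0$ follows from the neutralized condition $\nabla_\phi\tilde{\sigma}_0\in\sV^0_p\subset\sV^{\pm,0}_{p_\pm}$.

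The formal requirements on the limit maps $\xi^\pm$ follow routinely. Equivariance is inherited from the $(\rho,u)$-equivariance of $\tilde{\sigma}_0$ and the equivariance of the linear limit maps $\xi_\rho^\pm$; continuity is inherited from both ingredients; and injectivity reduces to that of $\xi_\rho^\pm$ by projecting to linear parts. Transversality $\xi(p)\in\cY_\tR$ holds because both affine subspaces contain the common point $\tilde{\sigma}_0(p)$ while their linear parts $\sV^{+,0}_{p_+}$ and $\sV^{-,0}_{p_-}$ sum to $\sV$ via the Anosov splitting $\sV=\sV^+_{p_+}\oplus\sV^0_p\oplus\sV^-_{p_-}$.

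For the Anosov contraction, I would exploit the natural fibration $\AG/\asP^\pm\to\sG/\sP^\pm$ (and, in the $\ASL$ case, $\ASL/\asQ^\pm\to\SL(\sV)/\sQ^\pm$, invoking Proposition \ref{prop.GW} to supply the linear Anosov data for $\tR\circ\rho$) sending each affine subspace to its linear part. At $\xi^\pm(p_\pm)$ this yields a split exact sequence
\[0\to\sV/\sV^{\pm,0}_{p_\pm}\to\sT_{\xi^\pm(p_\pm)}(\AG/\asP^\pm)\to\sT_{\xi_\rho^\pm(p_\pm)}(\sG/\sP^\pm)\to 0,\]
in which the Anosov splitting identifies the kernel $\sV/\sV^{\pm,0}_{p_\pm}$ canonically with $\sV^\mp_{p_\mp}$. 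Equipping the quotient summand with the linear Anosov metric of $\rho$ and the kernel with the bundle metric coming from $\cV^\mp\subset\cV$, I obtain a continuous $(\rho,u)$-equivariant family of Euclidean norms on $\sT\cY_\tR$ (resp.\ $\sT\cY$); note that $u(\gamma)$ merely relocates the base point and so does not affect tangent vectors. Under $\phi_{\pm t}$ the quotient summand contracts by the linear Anosov property, while the kernel summand $\sV^\mp_{p_\mp}$ contracts because the subbundle $\cV^\mp$ does so by the definition of Anosov representation.

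The main obstacle is the identification $\sV/\sV^{\pm,0}_{p_\pm}\cong\sV^\mp_{p_\mp}$, which depends on the endpoint $p_\mp$, whereas $\xi^\pm(p_\pm)\in\AG/\asP^\pm$ is intrinsic to $p_\pm$ alone. This is not truly obstructive because the hyperbolicity condition is required only pointwise over $\cflow$, where both endpoints are available simultaneously and the splitting varies continuously and $(\rho,u)$-equivariantly with $p$; uniformity of the decay constants $(C,c)$ across the two summands then follows from cocompactness of $\Gamma$ on $\cflow$.
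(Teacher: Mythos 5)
Your proposal is correct and follows essentially the same route as the paper: both build the affine limit maps by translating $\sV^{\pm,0}_{p_\pm}$ to the basepoint of a neutralized section obtained from Lemma \ref{lem.neutralized1}, and both derive the contraction from the linear Anosov property of $\rho$ (via Proposition \ref{prop.GW}) together with the decomposition $\sT_{\xi_u^\pm(p_\pm)}\ASL/\asQ^\pm=\sT_{\xi^\pm(p_\pm)}\SL(\sV)/\sQ^\pm\oplus\sV^\mp_{p_\mp}$. Your explicit use of Lemma \ref{lem.neutralized2} for well-definedness and your spelled-out treatment of the kernel summand merely fill in details the paper delegates to Ghosh--Treib.
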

	\begin{proof}
		Let $\sigma: \flow \to \acV$ be a H\"older continuous section which is differentiable along the flow lines of $\flow$. We use Lemma \ref{lem.neutralized1} to obtain a neutralized section
		$\sigma_u: \flow \to \acV$ such that $\nabla_\phi\sigma_u=\nabla^0_\phi\sigma$. Hence, we obtain that $\nabla_\phi\sigma_u:\flow\to\cV^0$ is a section. Suppose $\tilde{\sigma}_u$ is the lift of $\sigma_u$. Hence, it induces $\tilde{\sigma}_u:\cflow\to\sV$. We use Lemma \ref{lem.neutralized1} to deduce that $\left(\tilde{\sigma}_u(\phi_tp)-\tilde{\sigma}_u(p)\right)\in \sV^0_p$. Also, $\tilde{\sigma}_u(p)+\sV^{\pm,0}_{p_\pm}=(g_p,\tilde{\sigma}_u(p))\sV^{\pm,0}$ and we define,
		\[\xi_u(p):=\left(\tilde{\sigma}_u(p)+\sV^{+,0}_{p_+}, \tilde{\sigma}_u(p)+\sV^{-,0}_{p_-}\right)\]
		and notice that it is $(\rho,u)(\Gamma)$-equivariant and invariant under the flow. Hence, we obtain a limit map \[\xi_u:\bdry^{(2)}\to\AG/\asP^+\times\AG/\asP^-\subset\ASL/\asQ^+\times\ASL/\asQ^-\] 
		such that $\xi_u(\bdry^{(2)})\subset\cY_\tR\subset\cY$. Moreover, we observe that
		\[\sT_{\xi_u^\pm(p_\pm)}\ASL/\asQ^\pm=\sT_{\xi^\pm(p_\pm)}\SL(\sV)/\sQ^\pm\oplus g_p\sV^\mp.\]
		As $\rho\in\ano(\Gamma,\sG,\tR)$, we use Proposition \ref{prop.GW} and argue like Lemma 3.2 and Corollary 3.3 of Ghosh--Treib \cite{GT} to deduce that $(\rho,u)$ is partially affine Anosov in $\ASL$ with respect to $\asQ^\pm$. Moreover, as $\xi_u(\bdry^{(2)})\subset\cY_\tR\subset\cY$, it directly follows that $(\rho,u)$ is partially affine Anosov in $\AG$ with respect to $\asP^\pm$ too.
	\end{proof}
	
	\section{Affine invariants}\label{sec.3}

	\subsection{Affine cross ratios and triple ratios}\label{sec.ACTR}
	Suppose $\{\sV_i\}_{i=1}^{4}$ are four mutually transverse pair of parabolic subspaces of $\sV$ and $\{\sA_i\}_{i=1}^{4}$ are four affine subspaces in $\sV$ such that $\sV_i$ is respectively parallel to $\sA_i$. We recall Remark \ref{rem.pm} and consider the subspaces $\sV_i^\pm\subset\sV_i$. Hence,
	\[\sV=\sV_i^+\oplus(\sV_i\cap\sV_j)\oplus\sV_j^-.\]
	\begin{notation}
		Let $\pi_{i,j}:\sV\to\sV_i\cap\sV_j$ be the projection map with respect to the decomposition $\sV=\sV_i^+\oplus(\sV_i\cap\sV_j)\oplus\sV_j^-$.
		Suppose $h_{i,j}\in\sG$ be such that $(\sV_i,\sV_j)=h_{i,j}(\sV^{+,0},\sV^{-,0})$. Then $\pi_{i,j}=h_{i,j}\pi_0h_{i,j}^{-1}$.
	\end{notation}
	
	\begin{definition}
		Suppose $\nu^*:\sG\to\sHom(\sV,\sV^0)$ be such that $\nu^*(g)=\pi_0g^{-1}$ for all $g\in\sG$. Then $\nu^*$ induces the following map (see Proposition \ref{prop.hijk}):
		\[\nu^*:\sG/(\sP^+\cap\sP^-)\to\sHom(\sV,\sV^0),\]
		which we call the co-neutral map.
	\end{definition}
	Moreover, we recall the map $\nu$ from Definition \ref{def.nu} and for notational simplicity we denote $\nu(\sV_i,\sV_j)$ by $\nu_{i,j}$ and $\nu^*(\sV_i,\sV_j)$ by $\nu_{i,j}^*$.
	
	\begin{lemma}\label{lem.nu*}
		Suppose $\nu$ is the generalized neutral map and $\nu^*$ is the co-neutral map. Then $\ker(\nu^*_{i,j})=\sV_i^+\oplus\sV_j^-$ and $\mathrm{img}(\nu_{i,j})=\sV_i\cap\sV_j$. Moreover, they satisfy the following properties:
		\begin{enumerate}
			\item $\nu_{i,j}\nu^*_{i,j}=\pi_{i,j}$ and $\nu^*_{i,j}\nu_{i,j}=\pi_0$,
			\item $\nu^*_{i,j}=\nu^*_{i,k}\pi_{i,j}$ and $\nu_{i,j}=\pi_{i,j}\nu_{i,k}$,
			\item $\nu^*_{i,j}=\nu^*_{k,j}\pi_{i,j}$ and $\nu_{i,j}=\pi_{i,j}\nu_{k,j}$.
		\end{enumerate}		
	\end{lemma}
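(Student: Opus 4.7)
The plan is to verify every claim by direct computation after picking $h_{i,j}, h_{i,k}, h_{k,j}\in\sG$ as in Proposition \ref{prop.hijk}, so that $\nu_{i,j}=h_{i,j}\pi_0$, $\nu^*_{i,j}=\pi_0 h_{i,j}^{-1}$, and $\pi_{i,j}=h_{i,j}\pi_0 h_{i,j}^{-1}$, and analogously for the other index pairs. Each identity then reduces to formal manipulation using $\pi_0^2=\pi_0$, the transport formulas $h_{i,j}\sV^{+,0}=\sV_i$, $h_{i,j}\sV^{-,0}=\sV_j$, $h_{i,j}\sV^+=\sV_i^+$, $h_{i,j}\sV^-=\sV_j^-$ and $h_{i,j}\sV^0=\sV_i\cap\sV_j$ provided by Proposition \ref{prop.hijk} and Remark \ref{rem.pm}, together with the well-definedness identities $\pi_0 h=h\pi_0=\pi_0$ for $h\in\sP^+\cap\sP^-$ that make $\nu$ and $\nu^*$ descend to $\sG/(\sP^+\cap\sP^-)$.

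The image/kernel statements come for free: $\mathrm{img}(\nu_{i,j})=h_{i,j}\,\mathrm{img}(\pi_0)=h_{i,j}\sV^0=\sV_i\cap\sV_j$ and $\ker(\nu^*_{i,j})=h_{i,j}\ker(\pi_0)=h_{i,j}(\sV^+\oplus\sV^-)=\sV_i^+\oplus\sV_j^-$. Item 1 follows immediately from idempotence: $\nu_{i,j}\nu^*_{i,j}=h_{i,j}\pi_0^2 h_{i,j}^{-1}=\pi_{i,j}$ and $\nu^*_{i,j}\nu_{i,j}=\pi_0 h_{i,j}^{-1}h_{i,j}\pi_0=\pi_0$.

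For item 2, I would expand $\pi_{i,j}\nu_{i,k}=h_{i,j}\pi_0\,(h_{i,j}^{-1}h_{i,k})\,\pi_0$, which reduces the identity $\nu_{i,j}=\pi_{i,j}\nu_{i,k}$ to proving $\pi_0 m\pi_0=\pi_0$ for $m:=h_{i,j}^{-1}h_{i,k}$. Since $m\sV^{+,0}=\sV^{+,0}$ we have $m\in\sP^+$; the second half of Proposition \ref{prop.hijk} gives $h_{i,j}\sV^+=h_{i,k}\sV^+$ and so $m\sV^+=\sV^+$ as well. Thus $m$ preserves the flag $\sV^+\subset\sV^{+,0}$ and induces an operator on the quotient $\sV^{+,0}/\sV^+\cong\sV^0$. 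The unipotent radical of $\sP^+$ acts trivially on this quotient by weight-shift considerations, and the non-swinging hypothesis on $\tR$ from Remark \ref{rem.weight}---the combination of genericity, symmetry and extremality of $X_0$---forces the Levi factor of $\sP^+$ to act as the identity on $\sV^0$. Then $\pi_0 m\pi_0=\pi_0$, yielding $\pi_{i,j}\nu_{i,k}=h_{i,j}\pi_0=\nu_{i,j}$. The dual identity $\nu^*_{i,j}=\nu^*_{i,k}\pi_{i,j}$ drops out of the same computation read the other way, and item 3 is proved mirror-symmetrically, this time using $h_{i,j}\sV^-=h_{k,j}\sV^-$ and the analogous analysis on $\sV^{-,0}/\sV^-$ for an element of $\sP^-$.

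The one genuinely delicate step is the triviality $\pi_0 m\pi_0=\pi_0$ in item 2 (and its mirror in item 3). The unipotent contribution is routine, but controlling the Levi factor is where the non-swinging hypothesis earns its keep, and I expect this step to be handled either by a careful root/weight calculation or by direct citation of the relevant structural result from Smilga \cite{Smilga3}, where precisely such identities inside the Levi are analyzed.
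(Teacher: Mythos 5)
Your proof is correct, and its skeleton (write $\nu_{i,j}=h_{i,j}\pi_0$, $\nu^*_{i,j}=\pi_0h_{i,j}^{-1}$, $\pi_{i,j}=h_{i,j}\pi_0h_{i,j}^{-1}$ and reduce everything to formal manipulation) is exactly what the paper intends; the kernel/image claims and item 1 are handled identically. Where you genuinely diverge is the one nontrivial step, $\pi_0 m\pi_0=\pi_0$ for $m=h_{i,j}^{-1}h_{i,k}\in\sP^+$. The paper's one-line proof routes this through Lemma \ref{lem.nu2}, which asserts $\nu_Y(\sV_i,\sV_k)-\nu_Y(\sV_i,\sV_j)\in\sV_i^+$, i.e.\ $mY-Y\in\sV^+$ for $Y\in\sV^0$ after translating back by $h_{i,j}$ --- and that lemma was established earlier by a dynamical contraction argument (conjugates of $\exp(-nX_0)$ pushing $\sV_k$ to $\sV_j$). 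You instead prove the same containment algebraically: the unipotent radical of $\sP^+$ shifts the zero weight space into $\bigoplus_{\alpha(X_0)>0}\sV^\alpha\subset\sV^+$, and the Levi factor $\sP^+\cap\sP^-$ fixes $\sV^0$ pointwise. Both are sound; your route is self-contained at the level of weight combinatorics and makes the mechanism more transparent, while the paper's route recycles an already-proved dynamical fact and avoids the Levi decomposition entirely. One remark on your closing hedge: the ``delicate'' triviality of the Levi on $\sV^0$ does not require a fresh root/weight calculation or a new appeal to Smilga --- it is literally the last clause of Proposition \ref{prop.hijk} as stated in this paper ($hY=Y$ for all $Y\in\sV^0$ and $h\in\sP^+\cap\sP^-$), so you may simply cite that and your argument closes with no gap.
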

	\begin{proof}
		Follows directly from Proposition \ref{prop.hijk} and Lemma \ref{lem.nu2}.
	\end{proof}

	\begin{definition}\label{def.affcr}
		Let $X_{i,j}$ be a point in $\sA_i\cap \sA_j$ for $i,j\in\{1,2,3,4\}$. Then the {affine cross ratio} of the four mutually transverse affine parabolic spaces $\{\sA_i\}_{i=1}^4$ is defined as follows:
		\[\beta_{1,2,3,4}=\beta(\sA_1,\sA_2,\sA_3,\sA_4)\defeq [{{\nu^*_{1,4}-\nu^*_{2,3}}}](X_{1,3}-X_{2,4}).\]
	\end{definition}
	
	We use Lemma \ref{lem.nu*} to observe that the above definition is well defined i.e. $\beta_{1,2,3,4}$ is independent of the choice of $X_{1,3}$ and $X_{2,4}$.
	
	\begin{lemma}\label{lem.altmarg}
		Let $X_i$ be a point inside $\sA_i$ for $1\leq i\leq4$. Then
		\[\beta_{1,2,3,4}= [\nu^*_{1,4}-\nu^*_{1,3}] X_1+[\nu^*_{2,3}-\nu^*_{2,4}] X_2-[\nu^*_{2,3}-\nu^*_{1,3}]X_3 -[\nu^*_{1,4}-\nu^*_{2,4}]X_4.\]
	\end{lemma}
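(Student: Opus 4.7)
My plan is to process $X_{1,3}$ and $X_{2,4}$ separately, telescoping the difference $\nu^*_{1,4}-\nu^*_{2,3}$ through the intermediate operator $\nu^*_{1,3}$ in the first case and through $\nu^*_{2,4}$ in the second, while decomposing each intersection point as a base point plus a vector lying in one of the parabolic subspaces along which the relevant $\nu^*$ maps agree.

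The core auxiliary identities I shall extract from Lemma \ref{lem.nu*} are the following:
\begin{enumerate}
\item[(a)] For $v\in\sV_i$, one has $\nu^*_{i,j}(v)=\nu^*_{i,k}(v)$. Indeed, by item (2) of Lemma \ref{lem.nu*}, $\nu^*_{i,j}=\nu^*_{i,k}\pi_{i,j}$, and for $v\in\sV_i$ the difference $v-\pi_{i,j}(v)$ lies in $\sV_i^+\subseteq\ker\nu^*_{i,k}$.
\item[(b)] For $v\in\sV_j$, one has $\nu^*_{i,j}(v)=\nu^*_{k,j}(v)$, by the symmetric argument using item (3).
\end{enumerate}

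For $X_{1,3}$, I write simultaneously $X_{1,3}=X_1+(X_{1,3}-X_1)$ with $X_{1,3}-X_1\in\sV_1$ and $X_{1,3}=X_3+(X_{1,3}-X_3)$ with $X_{1,3}-X_3\in\sV_3$. Applying (a) with $i=1,\,j=3,\,k=4$ gives $\nu^*_{1,4}(X_{1,3}-X_1)=\nu^*_{1,3}(X_{1,3}-X_1)$, and applying (b) with $i=1,\,j=3,\,k=2$ gives $\nu^*_{2,3}(X_{1,3}-X_3)=\nu^*_{1,3}(X_{1,3}-X_3)$. Subtracting these two, the $\nu^*_{1,3}(X_{1,3})$ terms cancel and I will obtain
\[ [\nu^*_{1,4}-\nu^*_{2,3}](X_{1,3})=[\nu^*_{1,4}-\nu^*_{1,3}]X_1 - [\nu^*_{2,3}-\nu^*_{1,3}]X_3. \]
The entirely analogous computation for $X_{2,4}$ (decomposing via $X_2\in\sA_2$ and $X_4\in\sA_4$, and telescoping through $\nu^*_{2,4}$) yields
\[ [\nu^*_{1,4}-\nu^*_{2,3}](X_{2,4})=[\nu^*_{1,4}-\nu^*_{2,4}]X_4 - [\nu^*_{2,3}-\nu^*_{2,4}]X_2. \]
Substituting both expressions into the definition $\beta_{1,2,3,4}=[\nu^*_{1,4}-\nu^*_{2,3}](X_{1,3}-X_{2,4})$ will produce exactly the claimed formula.

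The computation is routine and presents no real obstacle; the only choice to make is which pair of intermediate $\nu^*$ maps to telescope through, and the natural selection $\nu^*_{1,3}$ for $X_{1,3}$ and $\nu^*_{2,4}$ for $X_{2,4}$ is dictated by the incidence pattern $X_{i,j}\in\sA_i\cap\sA_j$. Note that well-definedness of $\beta$ (independence of $X_{1,3}$, $X_{2,4}$) is already asserted after Definition \ref{def.affcr}, so no ambiguity arises; in fact the lemma refines that statement by exhibiting $\beta$ as an explicit alternating combination in arbitrary base points $X_1,X_2,X_3,X_4$.
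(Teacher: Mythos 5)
Your proof is correct and follows essentially the same route as the paper's: both are direct computations from Definition \ref{def.affcr} that exploit the kernel and compatibility relations of the co-neutral maps in Lemma \ref{lem.nu*}, telescoping through the intermediate operators $\nu^*_{1,3}$ and $\nu^*_{2,4}$. The only difference is bookkeeping — the paper decomposes $X_{i,j}$ into its $\sV_i^+$ and $\sV_j^-$ projections and uses that these are annihilated by the relevant $\nu^*$'s, whereas you decompose $X_{i,j}$ as $X_i$ plus a vector in $\sV_i$ (resp.\ $X_j$ plus a vector in $\sV_j$), which is a marginally cleaner way to organize the identical cancellation.
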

	\begin{proof}
		Suppose $X_{i,j}\in \sA_i\cap \sA_j$ and let $X_{i,j}^+$ and $X_{i,j}^-$ respectively be the projection of $X_{i,j}$ on $\sV_i^+$ and $\sV_j^-$ with respect to the decomposition 
		\[\sV=\sV_i^+\oplus(\sV_i\cap \sV_j)\oplus \sV_j^-.\]
		We observe that as $X_{i,j}$ varies along $\sA_i\cap \sA_j$ the projection $X_{i,j}^\pm$ stays fixed. Moreover, $X_{i,j}^-+\sV_i=\sA_i$ and $X_{i,j}^++\sV_j=\sA_j$. Hence, using Lemma \ref{lem.nu*} we obtain that $\nu^*_{i,j}(X_{i,j}^\pm)=0$ and deduce that
		\begin{align*}
			\beta_{1,2,3,4}= 
			&\ [\nu^*_{1,4}-\nu^*_{2,3}]( X_{1,3}^++ X_{1,3}^--X_{2,4}^+-X_{2,4}^-)\\
			=&\ \nu^*_{1,4}(X_{1,3}^-)+\nu^*_{2,3}
			(X_{2,4}^-)-\nu^*_{2,3}( X_{1,3}^+) -\nu^*_{1,4}( X_{2,4}^+)\\
			=&\ [\nu^*_{1,4}-\nu^*_{1,3}](X_{1,3}^-)+
			[\nu^*_{2,3}-\nu^*_{2,4}](X_{2,4}^-)-[\nu^*_{2,3}-\nu^*_{1,3}](X_{1,3}^+)\\ 
			&-[\nu^*_{1,4}-\nu^*_{2,4}](X_{2,4}^+).
		\end{align*}
		Finally, again using Lemma \ref{lem.nu*} our result follows.
	\end{proof}
	
	\begin{definition}\label{def.afftr}
		Suppose $X_i$ is a point inside $\sA_i$ for $i\in\{2,3,4\}$. Then the {affine triple ratio} of three mutually transverse affine parabolic spaces $\{\sA_i\}_{i=2}^4$ is defined as follows:
		\begin{align*}
			\delta_{2,3,4}= \delta(\sA_2,\sA_3,\sA_4)
			\defeq&\  [\nu^*_{2,3}+\nu^*_{3,2}](X_2-X_3)+          [\nu^*_{3,4}+\nu^*_{4,3}](X_3-X_4)\\
			&\ +[\nu^*_{4,2}+\nu^*_{2,4}](X_4-X_2).
		\end{align*}
	\end{definition}
	
	We use Lemma \ref{lem.nu*} to observe that the above definition is well-defined and observe that $\delta_{2,3,4}=\delta_{3,4,2}=\delta_{4,2,3}=-\delta_{3,2,4}=-\delta_{2,4,3}=-\delta_{4,3,2}$. Moreover, for any $(g,Y)\in\AG$ we have
	\[\delta(\sA_2,\sA_3,\sA_4)=\delta((g,Y) \sA_2, (g,Y) \sA_3, (g,Y) \sA_4).\]
	Finally, we note that $\omega_0\delta=\delta$.
	
	\begin{proposition}\label{prop.cr}
		Suppose $\beta$ is the affine cross ratio and $\delta$ is the affine triple ratio. Then for any five affine parabolic spaces $\sA_*,\{\sA_i\}_{i=1}^{4}$ which are mutually transverse to each other and for any $(g,Y)\in\AG$ the following identities hold:
		\begin{enumerate}
			\item $\beta((g,Y)\sA_1,(g,Y)\sA_2,(g,Y)\sA_3,(g,Y)\sA_4)=\beta(\sA_1,\sA_2,\sA_3,\sA_4)$,
			\item \label{identity:afcros} $\beta_{1,2,3,4}=\beta_{2,1,4,3}=-\omega_0\beta_{3,4,1,2}=-\omega_0\beta_{4,3,2,1}$,
			\item $\beta_{1,2,3,4}+\beta_{1,2,4,3}=0$,
			\item $\beta_{1,*,3,4}+\beta_{*,2,3,4}=\beta_{1,2,3,4}$,
			\item $\delta_{2,3,4}=\beta_{*,2,3,4}+\beta_{*,3,4,2}+\beta_{*,4,2,3}$
		\end{enumerate}
	\end{proposition}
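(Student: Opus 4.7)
The strategy is to reduce all five identities to routine manipulation of linear combinations of the maps $\nu^{*}_{i,j}$, using two pieces of input: the covariance of $\nu^{*}$ under $\sG$, and the expanded expression for $\beta$ given by Lemma \ref{lem.altmarg}. For any $(g,Y)\in\AG$, Proposition \ref{prop.hijk} shows that if $h_{i,j}\in\sG$ realises $(\sV_i,\sV_j)=h_{i,j}(\sV^{+,0},\sV^{-,0})$, then $gh_{i,j}$ realises $(g\sV_i,g\sV_j)=(gh_{i,j})(\sV^{+,0},\sV^{-,0})$. Hence $\nu^{*}(g\sV_i,g\sV_j)=\pi_0(gh_{i,j})^{-1}=\nu^{*}_{i,j}\circ g^{-1}$. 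Since a point $X_{i,j}\in\sA_i\cap\sA_j$ is carried to $gX_{i,j}+Y\in(g,Y)\sA_i\cap(g,Y)\sA_j$, the translational parts cancel inside the differences, and identity~(1) follows by direct substitution.

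For identity~(2), the main point is understanding the action of $\omega_0$ on the co-neutral map. The plan is to pick a Weyl element $w_0\in\sG$ with $w_0(\sV^{+,0},\sV^{-,0})=(\sV^{-,0},\sV^{+,0})$; then $h_{j,i}:=h_{i,j}w_0$ realises $(\sV_j,\sV_i)$, so that $\nu^{*}_{j,i}=\pi_0 w_0^{-1}h_{i,j}^{-1}$. Because $X_0$ is symmetric ($-\omega_0 X_0=X_0$), the element $w_0$ interchanges $\sV^{+}$ with $\sV^{-}$ and preserves $\sV^{0}$, which gives the commutation $\pi_0 w_0^{\pm 1}=w_0^{\pm 1}\pi_0$. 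With $\omega_0$ denoting the restriction of $w_0$ to $\sV^{0}$ (so that $\omega_0^{2}=1$, as $\sG$ is split and hence $\sM$ is trivial), one obtains the clean identity $\nu^{*}_{j,i}=\omega_0\,\nu^{*}_{i,j}$. Plugging this into the definition of $\beta_{3,4,1,2}$ and $\beta_{4,3,2,1}$ (and using $X_{j,i}=X_{i,j}$) then yields $\beta_{3,4,1,2}=\beta_{4,3,2,1}=-\omega_0\beta_{1,2,3,4}$, while $\beta_{2,1,4,3}=\beta_{1,2,3,4}$ follows by inspection of the Lemma~\ref{lem.altmarg} formula (which is manifestly invariant under the simultaneous swaps $1\leftrightarrow 2$, $3\leftrightarrow 4$).

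Identities (3), (4) and (5) are then reduced to bookkeeping. For (3), I expand both $\beta_{1,2,3,4}$ and $\beta_{1,2,4,3}$ via Lemma~\ref{lem.altmarg}; swapping the roles of indices $3$ and $4$ negates every bracket, so the sum vanishes termwise. For (4), the expansions of $\beta_{1,*,3,4}$ and $\beta_{*,2,3,4}$ have opposite $X_{*}$-coefficients (the brackets $[\nu^{*}_{*,3}-\nu^{*}_{*,4}]$ cancel), and the remaining coefficients of $X_1,X_2,X_3,X_4$ combine exactly into those of $\beta_{1,2,3,4}$. For (5), the same expansion produces an $X_{*}$-coefficient that telescopes to zero, and the coefficients of $X_2$, $X_3$, $X_4$ reorganise into pairs of the form $[\nu^{*}_{i,j}+\nu^{*}_{j,i}](X_i-X_j)$, matching the defining expression for $\delta_{2,3,4}$.

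The main obstacle is identity~(2): one must justify the relation $\nu^{*}_{j,i}=\omega_0\nu^{*}_{i,j}$ carefully, by verifying that the chosen Weyl representative $w_0$ both swaps the two opposite parabolic subspaces and commutes with $\pi_0$ on $\sV^{0}$. Once this is set up, the rest of the proof is linear-algebraic bookkeeping with the alternative formula of Lemma~\ref{lem.altmarg}.
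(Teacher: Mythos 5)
Your proof is correct and follows essentially the same route as the paper's (which is much terser): identity (1) from equivariance in the definition, identity (2) from the symmetries, and identities (3)--(5) by expanding via Lemma \ref{lem.altmarg} and cancelling term by term; your explicit justification of $\nu^*_{j,i}=\omega_0\nu^*_{i,j}$ fills in what the paper leaves implicit. One small correction: for split $\sG$ only $\fm$ is trivial while $\sM$ is a finite group, but your conclusion that $\omega_0^2=1$ on $\sV^0$ still holds because $\sM\subset\sP^+\cap\sP^-$ acts trivially on $\sV^0$ by Proposition \ref{prop.hijk}.
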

	\begin{proof}
		We use the definition of $\beta$ to deduce that for all $(g,Y)\in\AG$,
		\[\beta((g,Y)\sA_1,(g,Y)\sA_2,(g,Y)\sA_3,(g,Y)\sA_4)=\beta(\sA_1,\sA_2,\sA_3,\sA_4).\]
		Moreover, exploiting the symmetries in the definition of $\beta$ we obtain the identity \ref{identity:afcros}. 
		Now interchanging $A_3$ and $A_4$ in Lemma \ref{lem.altmarg} and adding them up we obtain that $\beta_{1,2,3,4}+\beta_{1,2,4,3}=0$. 
		Suppose $\sA_*$ is another affine parabolic space which is mutually transverse with the other affine parabolic spaces $\{\sA_i\}_{i=1}^4$. Now a direct computation using Lemma \ref{lem.altmarg} gives us $\beta_{1,*,3,4}+\beta_{*,2,3,4}=\beta_{1,2,3,4}$. Finally, we cyclically permute $\sA_2,\sA_3,\sA_4$ in Lemma \ref{lem.altmarg} and add them up to conclude our result.
	\end{proof}
	
	\subsection{Margulis invariants}\label{sec.MI}
	
	In this subsection we introduce Margulis invariants and relate them with affine cross ratios and in special cases (i.e. when the representation $\tR$ is the adjoint representation) with infinitesimal Jordan projections.
	
	\begin{definition}\label{def.marginv}
		Suppose $(g,Y)\in\AG$ is such that $\tJd_g$ is of type $X_0$, let $g_\tth$ be the hyperbolic part of $g$ with respect to the Jordan decomposition. Also, let $h\in\sG$ be such that $g_\tth=h\texp(\tJd_g)h^{-1}$. Then the \textit{Margulis} invariant of $(g,Y)$, is defined as follows:
		\[\tM(g,Y):=\pi_0({h}^{-1}Y).\]
	\end{definition}
	
	\begin{theorem}\label{thm.alphabeta}
		Suppose $(g,Y)\in\AG$ be such that its action on the space of affine parabolic subspaces has an attracting (resp. repelling) fixed point $\sA_+$ (resp. $\sA_-$) and $\sA_\pm$ are transverse to each other. Then for any affine parabolic space $\sA$ which is transverse to both $\sA_\pm$ the following holds:
		\[\beta(\sA_+,\sA_-,(g,Y) \sA, A)=\tM(g,Y)+\tM((g,Y)^{-1})={(1-\omega_0)\tM(g,Y)}.\]
	\end{theorem}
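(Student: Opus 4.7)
The plan is to exploit the $\AG$-invariance of the cross ratio from Proposition \ref{prop.cr}(1), together with the conjugation equivariance built into the definition of $\tM$, to reduce to a normalised frame where explicit calculation becomes tractable. Writing $\gamma = (g, Y)$ and conjugating by a suitable element of $\sG$, I may assume $g_\tth = \exp(\tJd_g)$, so that $\sV^\pm_g = \sV^\pm$, $\sV^0_g = \sV^0$, the linear part $g$ lies in $\sP^+ \cap \sP^-$, and by Proposition \ref{prop.hijk} $g$ acts as the identity on $\sV^0$. The attracting/repelling hypothesis then forces $\sA_\pm$ to be parallel to $\sV^{\pm, 0}$. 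For any $Z \in \sA_+ \cap \sA_-$, the invariance $\gamma \sA_\pm = \sA_\pm$ gives $\gamma Z - Z \in \sV^{+, 0} \cap \sV^{-, 0} = \sV^0$. Expanding $\gamma Z - Z = (g - I) Z + Y$ in $\sV = \sV^+ \oplus \sV^0 \oplus \sV^-$ and using that $(g - I)|_{\sV^0} = 0$ while $g - I$ is invertible on $\sV^\pm$ (all eigenvalues of $g$ there have moduli different from $1$), I identify $\gamma Z - Z = \pi_0(Y) = \tM(\gamma)$.

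Next I translate by $-Z \in \sV$, an element of $\AG$ and hence preserving $\beta$; this sends $(\sA_+, \sA_-)$ to $(\sV^{+, 0}, \sV^{-, 0})$ and $\gamma$ to $(g, \tM(\gamma))$. I then substitute into Lemma \ref{lem.altmarg} with $X_1 = X_2 = 0$, $X_4 = W \in \sA$, and $X_3 = \gamma W = g W + \tM(\gamma)$. The $W$-dependent terms should collapse via the $(\sP^+ \cap \sP^-)$-equivariance of $\nu^*$: since $g$ stabilises both $\sV^{+, 0}$ and $\sV^{-, 0}$, the relation $\sV_3 = g \sV_4$ yields $\nu^*_{\pm, 3} = \nu^*_{\pm, 4} \circ g^{-1}$, so that $[\nu^*_{+, 3} - \nu^*_{-, 3}](g W) = [\nu^*_{+, 4} - \nu^*_{-, 4}](W)$, cancelling the remaining $W$-term. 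What survives is
\[ \beta(\sA_+, \sA_-, \gamma \sA, \sA) = [\nu^*_{+, 3} - \nu^*_{-, 3}] \, \tM(\gamma). \]

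The technical core, which I expect to be the main obstacle, is the computation of $\nu^*_{\pm, 3}$ restricted to $\sV^0$. I would invoke the Levi decomposition $\sP^+ = (\sP^+ \cap \sP^-) \, U^+$ together with the fact that the unipotent radical $U^+$ acts simply transitively on the open cell of parabolic subspaces transverse to $\sV^{+, 0}$: this supplies a unique representative $u \in U^+$ with $u \sV^{-, 0} = \sV_3$. Because $U^+$ preserves the filtration $\sV^+ \subset \sV^{+, 0}$ and acts trivially on the quotient $\sV^{+, 0} / \sV^+ \cong \sV^0$, the restriction of $\pi_0 \circ u^{-1}$ to $\sV^0$ is the identity, so $\nu^*_{+, 3}|_{\sV^0} = \mathrm{id}$. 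An analogous computation with a representative of the form $w \cdot u'$, where $w \in \sG$ is any lift of the longest Weyl element $\omega_0$ and $u' \in U^+$, using that $w$ swaps $\sV^+$ with $\sV^-$ and acts on $\sV^0$ as $\omega_0$, yields $\nu^*_{-, 3}|_{\sV^0} = \omega_0^{-1}$; since $\omega_0$ is an involution in $W$, this equals $\omega_0$. Hence $\beta = (1 - \omega_0) \tM(\gamma)$.

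It remains to establish $\tM(\gamma^{-1}) = -\omega_0 \tM(\gamma)$. In the normalised frame, $g^{-1}_\tth = \exp(-\tJd_g)$ is conjugate to $\exp(\tJd_{g^{-1}}) = \exp(-\omega_0 \tJd_g)$ by any lift $w$ of $\omega_0$. Applying the definition of the Margulis invariant to $\gamma^{-1} = (g^{-1}, -g^{-1} Y)$ and tracking components through the decomposition $\sV = \sV^+ \oplus \sV^0 \oplus \sV^-$ --- using that $w^{-1}$ swaps $\sV^+$ with $\sV^-$ and acts on $\sV^0$ as $\omega_0^{-1}$, while $g^{-1}$ preserves each summand and restricts to the identity on $\sV^0$ --- one obtains $\tM(\gamma^{-1}) = \pi_0(w^{-1}(-g^{-1} Y)) = -\omega_0^{-1} \pi_0(Y) = -\omega_0 \tM(\gamma)$. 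Summing then gives $\tM(\gamma) + \tM(\gamma^{-1}) = (1 - \omega_0) \tM(\gamma) = \beta(\sA_+, \sA_-, \gamma \sA, \sA)$, completing the theorem.
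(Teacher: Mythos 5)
Your argument is correct, and it reaches the conclusion by a genuinely different route than the paper's. The paper works invariantly: it feeds arbitrary points $X_\pm\in\sA_\pm$, $X\in\sA$, $X_{g,Y}\in(g,Y)\sA$ into Lemma \ref{lem.altmarg}, uses the equivariance $\nu^*_{\pm,gw}(X_\pm)=\nu^*_{\pm,w}(g^{-1}X_\pm)$ and the invariance of $\sA_\pm$ to collapse the expression to $\nu^*_{+,-}(X_+-(g,Y)^{-1}X_+)-\nu^*_{-,+}(X_--(g,Y)^{-1}X_-)$, reads off $\tM(g,Y)+\tM((g,Y)^{-1})$ directly from Definition \ref{def.marginv}, and only then obtains the form $(1-\omega_0)\tM(g,Y)$ by citing Proposition 8.1 of Smilga. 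You instead normalize first --- using the $\AG$-invariance of $\beta$ and the conjugation-invariance of $\tM$ to put $(\sA_+,\sA_-)$ at $(\sV^{+,0},\sV^{-,0})$ and the translation part at $\tM(g,Y)\in\sV^0$ --- and then compute $\nu^*_{\pm,3}$ on $\sV^0$ explicitly via the simply transitive action of the unipotent radical on the big cell and a lift of $\omega_0$, arriving at $(1-\omega_0)\tM(g,Y)$ directly and deriving $\tM((g,Y)^{-1})=-\omega_0\tM(g,Y)$ as a transparent byproduct rather than as an imported fact. Both proofs rest on the same two pillars (Lemma \ref{lem.altmarg} and the equivariance of $\nu^*$ from Lemma \ref{lem.nu*}/Proposition \ref{prop.hijk}); the paper's version is shorter and frame-free, while yours is self-contained (no appeal to Smilga's Proposition 8.1), makes visible exactly where the Weyl element and the triviality of the $U^+$-action on $\sV^{+,0}/\sV^+$ enter, and exposes the identification of the normalized translation part with the Margulis invariant, which is essentially the content of Lemma \ref{lem.conjugate}. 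One point worth making explicit in your write-up is that $g\in\sP^+\cap\sP^-$ after normalization (so that Proposition \ref{prop.hijk} gives $g|_{\sV^0}=\mathrm{id}$ and the well-definedness of $\pi_0 h^{-1}$ on cosets), but this follows from the transversality of the fixed points and $\sP^\pm=\mathsf{Stab}_\sG(\sV^{\pm,0})$.
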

	\begin{proof}
		Suppose $X_\pm,X,X_{g,Y}$ are any four points respectively in $\sA_\pm$, $\sA$ and $(g,Y)\sA$. Suppose $\sW_\pm$ and $\sW$ be the corresponding vector subspaces parallel to $\sA_\pm$ and $\sA$. We simplify our notations and denote $\nu^*(\sW_\pm,\sW)$ by $\nu^*_{\pm,w}$, $\nu^*(\sW_\pm,g\sW)$ by $\nu^*_{\pm,gw}$ and $\nu^*(\sW_\pm,\sW_\mp)$ by $\nu^*_{\pm,\mp}$.
		
		Suppose $(\sW_\pm,\sW)=[h_{\pm,w}]\in\sG/(\sP^+\cap\sP^-)$. Then $(\sW_\pm,g\sW)=[gh_{\pm,w}]$. Hence, we use Lemma \ref{lem.nu*} and for any $h_{\pm,gw}\in\sG$ which satisfy $(\sW_\pm,g\sW)=[h_{\pm,gw}]$ we obtain that 
		\[ \nu^*_{\pm,gw}(X_\pm)=\nu^*_{\pm,w}(g^{-1}X_\pm).\] 
		It follows that
		\begin{align}\label{identity:1}
			\notag{[\nu^*_{\pm,w}-\nu^*_{\pm,gw}]} (X_\pm)&={\nu^*_{\pm,w}} (X_\pm-{g^{-1}}X_\pm)\\
			&={\nu^*_{\pm,w}}( X_\pm-(g,Y)^{-1}X_\pm-g^{-1}Y).
		\end{align}
		As $X_{g,Y}=(g,Y)X^\prime$ for some $X^\prime\in \sA$ and $(X^\prime-X)\in \sW$, using Lemma \ref{lem.nu*} we obtain that $[\nu^*_{-,w}-\nu^*_{+,w}](X^\prime-X)=0$ and
		\[[\nu^*_{-,gw}-\nu^*_{+,gw}](X_{g,Y})=
		[\nu^*_{-,w}-\nu^*_{+,w}](g^{-1}Y+X).\]
		Now replacing $[\nu^*_{-,gw}-\nu^*_{+,gw}](X_{g,Y})+
		[\nu^*_{+,w}-\nu^*_{-,w}](X)$ in Lemma \ref{lem.altmarg} by $[\nu^*_{-,w}-\nu^*_{+,w}](g^{-1}Y)$ and using identity \ref{identity:1} we deduce that
		\[\beta(\sA_+,\sA_-,(g,Y) \sA, \sA)={\nu^*_{+,w}}(X_+-(g,Y)^{-1}X_+)-{\nu^*_{-,w}}( X_--(g,Y)^{-1}X_-).\]
		We recall that $(g,Y)$ fixes $\sA_\pm$ and hence 
		$(X_\pm-(g,Y)^{-1}X_\pm)\in \sW_\pm$. Hence,  
		${[\nu^*_{\pm,w}-\nu^*_{\pm,\mp}]}( X_\pm-(g,Y)^{-1}X_\pm)=0$ and we deduce that
		\begin{align*}
			\beta(\sA_+,\sA_-,(g,Y) \sA, \sA)&= {\nu^*_{+,-}}(X_+-(g,Y)^{-1}X_+)- {\nu^*_{-,+}}(X_--(g,Y)^{-1}X_-)\\
			&= \tM(g,Y)+\tM((g,Y)^{-1}).
		\end{align*}
		Finally, using Proposition 8.1 of Smilga \cite{Smilga3} we conclude our result.
	\end{proof}
	
	\begin{lemma}\label{lem.conjugate}
		Suppose $g\in\sG$ is of type $X_0$. Then there exists $(h,X)\in\AG$ and $m\in\sM$ such that 
		\[(h,X)^{-1}(g,Y)(h,X)=(m\exp(\tJd_g),\tM(g,Y)).\]
	\end{lemma}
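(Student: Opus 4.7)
The plan is to split the conjugation into two independent pieces: first a linear conjugation by an element of $\sG$ that normalizes $g$ to the standard form $m\exp(\tJd_g)$, and then an affine translation that cleans up the translational part.

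For the linear step, take $h_0\in\sG$ as in Definition \ref{def.marginv} so that $g_\tth=h_0\exp(\tJd_g)h_0^{-1}$. Writing the Jordan decomposition $g=g_\te g_\tth g_\tu$ with commuting factors gives $h_0^{-1}gh_0=(h_0^{-1}g_\te h_0)\exp(\tJd_g)(h_0^{-1}g_\tu h_0)$, and the three conjugated factors still commute. Because $g$ is of type $X_0$ and $\sG$ is split (so $\fm=0$ and $\sM$ is finite), the centralizer of $\exp(\tJd_g)$ in $\sG$ reduces to $\sA\cdot\sM$; this group contains no nontrivial unipotent element and its elliptic part lies in $\sM$, forcing $h_0^{-1}g_\tu h_0=e$ and $m\defeq h_0^{-1}g_\te h_0\in\sM$. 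Hence $h_0^{-1}gh_0=m\exp(\tJd_g)$.

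For the translational step, a direct computation with the group law in $\AG$ gives
\[(h_0,X)^{-1}(g,Y)(h_0,X)=\bigl(m\exp(\tJd_g),\,h_0^{-1}Y+(m\exp(\tJd_g)-1)h_0^{-1}X\bigr).\]
Writing $X=h_0X''$, the problem reduces to solving the linear equation $(m\exp(\tJd_g)-1)X''=\tM(g,Y)-h_0^{-1}Y$. By Definition \ref{def.marginv} we have $\tM(g,Y)=\pi_0(h_0^{-1}Y)$, so the right-hand side equals $-(1-\pi_0)(h_0^{-1}Y)\in\sV^+\oplus\sV^-$. Because $m\in\sM$ commutes with $\sA$, the operator $m\exp(\tJd_g)$ preserves each weight space $\sV^\lambda$ and acts on it as an isometry scaled by $e^{\lambda(\tJd_g)}$; for $\lambda\in\Omega^\pm(X_0)$ these scalings have modulus $\neq 1$, so $m\exp(\tJd_g)-1$ is invertible on $\sV^+\oplus\sV^-$. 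This yields a unique $X''\in\sV^+\oplus\sV^-$, and $X\defeq h_0X''$ finishes the construction.

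The main obstacle is the first step — concluding that the elliptic factor of $h_0^{-1}gh_0$ is genuinely in $\sM$ and that no unipotent piece survives the conjugation. This is exactly where the extreme/generic hypothesis on $X_0$ together with the split structure of $\sG$ enters, pinning down the centralizer of $\exp(\tJd_g)$ tightly enough for the reduction to go through; without either ingredient the centralizer could be larger and the standard form $m\exp(\tJd_g)$ with $m\in\sM$ might be unreachable by a single $\sG$-conjugation. Once this structural fact is in hand, the remainder is the routine spectral inversion on the hyperbolic weight spaces, which is precisely the feature that the decomposition $\sV=\sV^+\oplus\sV^0\oplus\sV^-$ is designed to supply.
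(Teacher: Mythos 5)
Your proposal is correct and follows essentially the same route as the paper: normalize the linear part to $m\exp(\tJd_g)$ by conjugating with $h_0$, then kill the $\sV^+\oplus\sV^-$ component of the translational part by inverting $g-1$ on the expanding and contracting subspaces (the paper realizes this inverse explicitly via the convergent geometric series $\sum g^{\mp n}$ applied to $Y^\pm$, which is the same operator you invert abstractly). The only difference is that you spell out the centralizer argument for the linear normalization, a step the paper simply asserts by writing ``choose $h$ such that $g=hm\exp(\tJd_g)h^{-1}$.''
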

	\begin{proof}
		As $g\in{\sG}$ is of type $X_0$, we observe that there exists linear subspaces $\sV^\pm_g$ and $\sV^0_g$ of $\sV$ which are stabilized by $g$ such that the action of $g$ expands elements of $\sV^+_g$, contracts elements of $\sV^-_g$ and $gZ=Z$ for all $Z\in\sV_g^0$. Suppose $Y\in\sV$. We denote its projections on $\sV_g^\pm$ and $\sV_g^0$ respectively by $Y^\pm$ and $Y^0$. Hence, the following expressions are well defined:
		\[ X^+:=-g^{-1}\left(\sum_{n=0}^\infty g^{-n}\right)Y^+ \text{ and }
		X^-:=\left(\sum_{n=0}^\infty g^n\right)Y^-.\]
		We observe that $(X^\pm-gX^\pm)=Y^\pm$. Finally, we choose $X:=X^++X^-$ and choose $h$ such that $g=hm\exp(\tJd(g))h^{-1}$. Hence, we have
		\begin{align*}
			(h,X)^{-1}(g,Y)(h,X)&=(h^{-1}gh,h^{-1}Y-h^{-1}X-h^{-1}gX)\\
			&=(m\exp(\tJd_g),h^{-1}Y^0)=(m\exp(\tJd_g),\tM(g,Y)),
		\end{align*}
		and we conclude our result.
	\end{proof}

	\begin{proposition}\label{prop.derivative}
		Let $\sG$ be a split semisimple Lie group, $\{g_t\}_{t\in(-1,1)}\subset\sG$ be an one parameter family with $g_0=g$ loxodromic and $X=dg_0(1)g^{-1}$. Then
		\[\tM(g,X)=\left.\frac{d}{dt}\right|_{t=0}\tJd(g_t).\]
	\end{proposition}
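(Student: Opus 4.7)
The plan is to conjugate the curve $g_t$ into a form adapted to the Cartan subalgebra, differentiate at $t=0$, and project onto the zero weight space. Since $g=g_0$ is loxodromic, so is $g_t$ for $t$ near $0$, and by the analyticity of the Jordan projection on loxodromic elements recalled in Section~\ref{sec.RLG}, together with the finiteness of $\sM$ for split $\sG$, one produces a smooth family $h_t\in\sG$ with $h_0=:h$ and a fixed $m\in\sM$ such that
\[ h_t^{-1}\,g_t\,h_t \;=\; m\exp(\tJd(g_t)) \]
for all $t$ near $0$. Set $g^\prime:=m\exp(\tJd(g))=h^{-1}gh$, $\dot{\tJd}_0:=\left.\frac{d}{dt}\right|_{t=0}\tJd(g_t)\in\fa$, and $Z:=h^{-1}\dot h_0\in\fg$.

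Differentiating the displayed equality at $t=0$, using $\dot g_0=gX$ as matrices and the commutativity of $\tJd(g)$ with $\dot{\tJd}_0$ inside $\fa$, and then left-multiplying by $(g^\prime)^{-1}$, a routine computation yields the $\fg$-valued identity
\[ \dot{\tJd}_0 - \tAd_{h^{-1}}(X) \;=\; (1 - \tAd_{(g^\prime)^{-1}})\,Z. \]

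To conclude I would apply the projection $\pi_0:\fg\to\fg^0$. Since $\sG$ is split, $\fm$ is trivial and thus $\fg^0=\fa$; hence $\pi_0\dot{\tJd}_0=\dot{\tJd}_0$, and by Definition~\ref{def.marginv} with $\tR=\tAd$ we have $\pi_0(\tAd_{h^{-1}}X)=\tM(g,X)$. On the right-hand side: the subgroup $\sM$ centralizes $\fa$ and thus acts trivially on $\fa^*$, so it preserves each restricted root space $\fg^\alpha$; since $\sA$ also preserves the root decomposition and acts trivially on $\fg^0=\fa$, the element $(g^\prime)^{-1}\in\sM\sA$ stabilizes the decomposition $\fg=\fa\oplus\bigoplus_{\alpha\in\Sigma}\fg^\alpha$ and acts as the identity on $\fa$. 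Consequently $1-\tAd_{(g^\prime)^{-1}}$ vanishes on $\fa$ and sends $\bigoplus_{\alpha\neq 0}\fg^\alpha$ into itself, so its image lies in $\ker\pi_0$. Projecting the identity above then gives $\dot{\tJd}_0=\tM(g,X)$, which is precisely the claim.

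The main delicate point is producing the family $h_t$ with $m_t$ forced to equal the constant $m$; this follows from the analyticity of $\tJd$ on loxodromic elements (and hence of the hyperbolic part $g_{t,\tth}$ in $t$), the analytic implicit function theorem \cite{KraPa}, and the discreteness of $\sM$ for split $\sG$. Once $h_t$ is in hand, the remainder of the proof is elementary linear algebra on the restricted root decomposition.
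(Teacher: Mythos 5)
Your proposal is correct and follows essentially the same route as the paper: write $g_t=h_t m\exp(\tJd(g_t))h_t^{-1}$ with $m$ constant by discreteness of $\sM$ for split $\sG$, differentiate at $t=0$, and apply $\pi_0$, observing that the coboundary term $(1-\tAd_{(g^\prime)^{-1}})Z$ dies under $\pi_0$ because $g^\prime\in\sM\sA$ preserves the restricted root decomposition and acts trivially on $\fa=\fg^0$. The only difference is that you spell out the ``straightforward computation'' the paper leaves implicit; the substance is identical.
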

	\begin{proof}
		As $g_0=g$ is loxodromic, it follows that for $t$ small enough $g_t$ is loxodromic too. Hence, there exists $h_t\in\sG$ and $m_t\in\sM$ such that
		\[g_t=h_tm_t\exp(\tJd(g_t))h_t^{-1}=h_t\exp(\tJd(g_t))m_th_t^{-1}.\]
		Moreover, as $\sG$ is split we obtain that $\sM$ is discrete. Hence, $m_t=m\in\sM$ for small variations of $t$ and we obtain that
		\begin{align*}
			X=&\ \left.\frac{d}{dt}\right|_{t=0}g_tg^{-1}=\left.\frac{d}{dt}\right|_{t=0}h_t\exp(\tJd(g_t))mh_t^{-1}h_0m^{-1}\exp(-\tJd(g))h_0^{-1}\\
			=&\ dh_0(1)h_0^{-1}+\ \left.\frac{d}{dt}\right|_{t=0}h_0\exp(\tJd(g_t))\exp(-\tJd(g))h_0^{-1}\\
			&-h_0\exp(\tJd(g))mh_0^{-1}dh_0(1)h_0^{-1}h_0m^{-1}\exp(-\tJd(g))h_0^{-1}\\
			=&\ dh_0(1)h_0^{-1}+h_0\left(\left.\frac{d}{dt}\right|_{t=0}\tJd(g_t)\right)h_0^{-1}-gdh_0(1)h_0^{-1}g^{-1}.
		\end{align*}
		Therefore, we deduce that
		\[\tAd(h_0^{-1})[X]= \left.\frac{d}{dt}\right|_{t=0}\tJd(g_t)+ h_0^{-1}dh_0(1)-\tAd(h_0^{-1}gh_0)[h_0^{-1}dh_0(1)].\]
		Moreover, a straightforward computation shows that
		\[\pi_0(\tAd(h_0^{-1}gh_0)[h_0^{-1}dh_0(1)])=\pi_0[h_0^{-1}dh_0(1)].\]
		Finally, as $\tJd(g_t)\in\fa$, our result follows.
	\end{proof}
	
	Similar results relating Margulis invariants and Jordan projections has also been independently obtained by Andr\' es Sambarino \cite{Samba2} and Kassel--Smilga \cite{KaSm}.

	\subsection{Convexity of spectra}\label{sec.CS}
	Suppose $\rho\in\ano(\Gamma,\sG,\tR)$ and $(\rho,u):\Gamma\to\AG$ be an injective homomorphism. We use Proposition \ref{prop.partialaffineano} to obtain that for all infinite order elements $\gamma\in\Gamma$ the action of $({\rho,u})(\gamma)$ on the space of affine parabolic subspaces of $\sV$ has an attracting fixed point and a repelling fixed point. We abuse notation and let $\xi^+_{\rho,u}(\gamma_+)$ (resp. $\xi^-_{\rho,u}(\gamma_-)$) denote the attracting (resp. repelling) fixed point. Henceforth, we fix the representation $(\rho,u)$ and omit the subscripts $(\rho,u)$ from the notation of the Margulis invariants and the affine crossratios. Also, when there is no confusion of notation, for $a,b,c,d\in\bdry$ all distinct, we denote $\beta(\xi(a), \xi(b), \xi(c), \xi(d))$ by $\beta(a,b,c,d)$. In this subsection we show that the Margulis invariant spectra is a convex set.
	
	\begin{proposition}\label{prop.lim}
		Suppose $(\rho,u)$ is as above and $\gamma,\eta\in\Gamma$ are two infinite order elements such that the four points $\gamma_\pm,\eta_\pm\in\bdry$ are distinct. Then the following holds:
		\begin{enumerate}
			\item If the sequence $\{\gamma^m\eta^k\}_{m\in\mathbb{N}}\subset\Gamma$ contains a subsequence $\{\gamma^{n_i}\eta^k\}_{i\in\mathbb{N}}$ consisting only of infinite order elements. Then, 
			\begin{align*}
				\beta( \gamma_-,\eta_-,\gamma_+,\eta^k\gamma_+)&+
				\beta(\gamma_+,\eta_+,\gamma_-,\eta^{-k}\gamma_-)\\
				&= 	(1-\omega_0)\lim_{i\to\infty}[\tM(\gamma^{n_i}\eta^k)-          \tM(\gamma^{n_i})-\tM(\eta^k)].
			\end{align*}
			\item If the sequence $\{\gamma^m\eta^m\}_{m\in\mathbb{N}}\subset\Gamma$ contains a subsequence $\{\gamma^{n_i}\eta^{n_i}\}_{i\in\mathbb{N}}$ consisting only of infinite order elements. Then, 
			\[(1-\omega_0)\lim_{i\to\infty}(\tM(\gamma^{n_i}\eta^{n_i})-\tM(\gamma^{n_i})-\tM(\eta^{n_i}))=(1-\omega_0)\beta(\gamma_+,\eta_+,\gamma_-,\eta_-).\]
		\end{enumerate}
	\end{proposition}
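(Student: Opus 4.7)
The key tool is Theorem~\ref{thm.alphabeta}, which expresses $(1-\omega_0)\tM(g,Y)$ as the affine cross ratio $\beta(\sA_+,\sA_-,(g,Y)\sA,\sA)$ for any affine parabolic $\sA$ in general position. My plan is to write each of the three Margulis invariants appearing in the formulas as such a cross ratio with a common choice $\sA=\xi(x_0)$ for a generic $x_0\in\bdry$, and then rearrange and take the limit using the decomposition and equivariance identities for $\beta$ from Proposition~\ref{prop.cr}. Throughout I adopt the statement's convention of writing $\gamma_\pm$ etc.\ in place of $\xi(\gamma_\pm)$ inside $\beta$.

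The first structural input is the convergence of the fixed points, derived from the north--south dynamics of $\gamma$ and $\eta$ on $\bdry$ combined with continuity of the partial affine Anosov limit map: in part~(1), $(\gamma^{n_i}\eta^k)_+\to\xi(\gamma_+)$ and $(\gamma^{n_i}\eta^k)_-\to\xi(\eta^{-k}\gamma_-)$; in part~(2), $(\gamma^{n_i}\eta^{n_i})_+\to\xi(\gamma_+)$ and $(\gamma^{n_i}\eta^{n_i})_-\to\xi(\eta_-)$. Writing $\alpha=\gamma^{n_i}\eta^{\bullet}$ (with $\bullet$ being $k$ or $n_i$ according to the part), the factorisation $\alpha\sA=\gamma^{n_i}(\eta^{\bullet}\sA)$, together with the $(\rho,u)(\gamma^{-n_i})$-invariance of $\beta$ and the decomposition $\beta_{1,2,3,4}=\beta_{1,2,3,*}+\beta_{1,2,*,4}$, yields the telescoping identity $\beta(\gamma_+,\gamma_-,\alpha\sA,\sA)-\beta(\gamma_+,\gamma_-,\gamma^{n_i}\sA,\sA)=\beta(\gamma_+,\gamma_-,\eta^{\bullet}\sA,\sA)$, whose right-hand side is $n_i$-independent. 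The remaining difference $\beta(\alpha_+,\alpha_-,\alpha\sA,\sA)-\beta(\gamma_+,\gamma_-,\alpha\sA,\sA)$ splits, via two applications of $\beta_{1,2,3,4}=\beta_{1,*,3,4}+\beta_{*,2,3,4}$ with pivots $\xi(\gamma_\pm)$, into $\beta(\gamma_-,\alpha_-,\alpha\sA,\sA)+\beta(\alpha_+,\gamma_+,\alpha\sA,\sA)$.

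Of these two residuals, the first converges directly since $\alpha_-$ tends to a point distinct from $\xi(\gamma_-)$, but the second is degenerate because $\alpha_+$ and $\alpha\sA$ both converge to $\xi(\gamma_+)$. The key device that resolves this is the fixed-point identity $\gamma^{-n_i}\alpha_+=\eta^{\bullet}\alpha_+$, obtained from $\alpha\alpha_+=\alpha_+$: applying the affine isometry $(\rho,u)(\gamma^{-n_i})$ to all four arguments rewrites the term as $\beta(\eta^{\bullet}\alpha_+,\gamma_+,\eta^{\bullet}\sA,\gamma^{-n_i}\sA)$, which now has a non-degenerate limit with $\eta^{\bullet}\alpha_+\to\xi(\eta^{\bullet}\gamma_+)$ in part~(1) and $\eta^{n_i}\alpha_+\to\xi(\eta_+)$ in part~(2), together with $\gamma^{-n_i}\sA\to\xi(\gamma_-)$. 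Combining these limits with the surviving $\beta(\gamma_+,\gamma_-,\eta^{\bullet}\sA,\sA)-\beta(\eta_+,\eta_-,\eta^{\bullet}\sA,\sA)$ (where the second term comes from the $\tM(\eta^{\bullet})$ contribution) and invoking the symmetries $\beta_{1,2,3,4}=\beta_{2,1,4,3}=-\omega_0\beta_{3,4,1,2}$ along with one more pivot insertion, the $\sA$- and $\eta^{\bullet}\sA$-dependent pieces collect into a vanishing combination (since the left-hand side is manifestly $\sA$-independent), leaving exactly $\beta(\gamma_-,\eta_-,\gamma_+,\eta^k\gamma_+)+\beta(\gamma_+,\eta_+,\gamma_-,\eta^{-k}\gamma_-)$ in part~(1) and the single $\beta(\gamma_+,\eta_+,\gamma_-,\eta_-)$ in part~(2).

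The principal obstacle is precisely this simultaneous degeneration on the attracting side: without the fixed-point identity $\gamma^{-n_i}\alpha_+=\eta^{\bullet}\alpha_+$, the cross ratio $\beta(\alpha_+,\gamma_+,\alpha\sA,\sA)$ has three arguments converging to $\xi(\gamma_+)$ and its limit is ill-defined. That identity is what converts a subtle balance of colliding quantities into the image, under a non-degenerating affine isometry, of a cleanly converging cross ratio; the ensuing systematic bookkeeping with Proposition~\ref{prop.cr} to witness cancellation of all $\sA$-dependence is the delicate but mechanical final step.
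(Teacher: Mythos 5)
The paper itself gives no self-contained argument here: it simply invokes Theorem~\ref{thm.alphabeta} and defers to Propositions~2.3.4 and 2.3.5 of \cite{Ghosh3} ``verbatim''. Your reconstruction via $(1-\omega_0)\tM(g,Y)=\beta(\sA_+,\sA_-,(g,Y)\sA,\sA)$, the cocycle identities of Proposition~\ref{prop.cr}, equivariance, and the fixed-point identity $\gamma^{-n_i}\alpha_+=\eta^{\bullet}\alpha_+$ is exactly the right toolbox, and for part~(1), where $\eta^k$ is a fixed element, your outline does go through: all four residual cross ratios have limits in which the required pairs $(1,3),(1,4),(2,3),(2,4)$ stay transverse.

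There is, however, a genuine gap in part~(2). You claim that $\beta(\eta^{n_i}\alpha_+,\gamma_+,\eta^{n_i}\sA,\gamma^{-n_i}\sA)$ ``has a non-degenerate limit''. It does not: slots $1$ and $3$ both converge to $\xi_u(\eta_+)$, so the pair $(1,3)$ degenerates, and in fact this term diverges linearly in $n_i$. Indeed, inserting the pivot $\eta_+$ gives
\begin{align*}
\beta(\eta^{n_i}\alpha_+,\gamma_+,\eta^{n_i}\sA,\sA)=\beta(\eta^{n_i}\alpha_+,\eta_+,\eta^{n_i}\sA,\sA)+\beta(\eta_+,\eta_-,\eta^{n_i}\sA,\sA)+\beta(\eta_-,\gamma_+,\eta^{n_i}\sA,\sA),
\end{align*}
where the middle term equals $n_i(1-\omega_0)\tM(\eta)$ by Theorem~\ref{thm.alphabeta} and the outer two converge. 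This divergence is forced: it must cancel the equally divergent $-\beta(\eta_+,\eta_-,\eta^{n_i}\sA,\sA)=-(1-\omega_0)\tM(\eta^{n_i})$ sitting in your ``surviving'' combination, so your final bookkeeping is not merely a cancellation of $\sA$-dependence among convergent terms but a cancellation of linearly growing terms, which you cannot perform after passing to the limit term by term. The fix is to regroup before taking limits so that every piece converges individually: for instance, first split slot $3$ of $\beta(\alpha_+,\alpha_-,\alpha\sA,\sA)$ at $\gamma^{n_i}\sA$, and when resolving the attracting-side collision insert the pivot $\eta_+$ (rather than $\gamma_+$) next to $\alpha_+$, so that the subsequent application of $(\rho,u)(\eta^{-n_i})$ fixes slot $2$ and yields $\beta(\alpha_+,\eta_+,\sA,\eta^{-n_i}\sA)\to\beta(\gamma_+,\eta_+,\sA,\eta_-)$ with no colliding pair. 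With that regrouping the four surviving limits combine, via Proposition~\ref{prop.cr}, to $(1-\omega_0)\beta(\gamma_+,\eta_+,\gamma_-,\eta_-)$ as required; note also that Proposition~\ref{prop.crossratio} later proves the part~(2) statement by an entirely different (Charette--Drumm style) argument that avoids these degeneracies altogether.
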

	\begin{proof}
		We use Theorem \ref{thm.alphabeta} and follow verbatim the 	proof of Propositions 2.3.4 and 2.3.5 of Ghosh \cite{Ghosh3}.
	\end{proof}
	
	In fact, using an argument of Charette--Drumm \cite{cd} we obtain a finer version of Proposition \ref{prop.lim}. We start by making a few notational changes to make formulas look less tedious. 
	
	\begin{notation}
		Suppose $a,b\in\bdry$ and $\pi_{a,b}$ denote the projection on to $\sV^0_{a,b}:=(\sV_a\cap\sV_b)$ with respect to the splitting 
		\[\sV=\sV^+_a\oplus\sV^0_{a,b}\oplus\sV^-_b.\]
		We denote $\nu(a,b)$ by $\nu_{a,b}$ and $\nu^*(a,b)$ by $\nu^*_{a,b}$. Moreover, when $a=\gamma_+$ and $b=\gamma_-$ for some $\gamma\in\Gamma$, we denote $\sV^0_{a,b}$ by $\sV^0_\gamma$, $\pi_{a,b}$ by $\pi_\gamma$, $\nu_{a,b}$ by $\nu_\gamma$ and $\nu^*_{a,b}$ by $\nu^*_\gamma$. Observe that in the new notation for $n\in\N$ we have $\sV^0_{\gamma^n}=\sV^0_\gamma$, $\pi_{\gamma^n}=\pi_\gamma$, $\nu_{\gamma^n}=\nu_\gamma$ and $\nu^*_{\gamma^n}=\nu^*_\gamma$. 
	\end{notation}
	
	\begin{proposition}\label{prop.crossratio}
		Suppose $(\rho,u)$ is as above and $\gamma,\eta\in\Gamma$ are two infinite order elements such that the four points $\gamma_\pm,\eta_\pm\in\bdry$ are distinct and the sequence $\{\gamma^m\eta^m\}_{m\in\mathbb{N}}\subset\Gamma$ contains a subsequence $\{\gamma^{n_i}\eta^{n_i}\}_{i\in\mathbb{N}}$ consisting only of infinite order elements. Then the following identity holds:
		\[\lim_{i\to\infty}[\tM(\gamma^{n_i}\eta^{n_i})-\tM(\gamma^{n_i})-\tM(\eta^{n_i})]=\beta(\gamma_+,\eta_+,\gamma_-,\eta_-).\]
	\end{proposition}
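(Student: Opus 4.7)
The plan is to evaluate $\tM(\gamma^{n_i}\eta^{n_i})-\tM(\gamma^{n_i})-\tM(\eta^{n_i})$ directly as $n_i\to\infty$, following the spirit of Charette--Drumm; unlike the proof of Proposition~\ref{prop.lim}(2), which only detects the $(1-\omega_0)$-symmetrization because it invokes Theorem~\ref{thm.alphabeta}, this computation will pin down the limit exactly. Write $g_i:=\gamma^{n_i}\eta^{n_i}$. Since $\rho\in\ano(\Gamma,\sG,\tR)$, the attracting and repelling fixed points of $\rho(g_i)$ on $\bdry$ converge exponentially fast to $\gamma_+$ and $\eta_-$ respectively; we may therefore pick conjugators $h_{g_i}\in\sG$ with $\rho(g_i)_\tth=h_{g_i}\exp(\tJd_{g_i})h_{g_i}^{-1}$ that converge, modulo $\sP^+\cap\sP^-$ and at an exponential rate, to some $h_\star\in\sG$ characterized by $h_\star\sV^{+,0}=\xi^+(\gamma_+)$ and $h_\star\sV^{-,0}=\xi^-(\eta_-)$.

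Combining the cocycle identity $u(g_i)=u(\gamma^{n_i})+\rho(\gamma^{n_i})u(\eta^{n_i})$ with $\tM(g,Y)=\pi_0(h_g^{-1}Y)$ yields
\[
\tM(g_i)-\tM(\gamma^{n_i})-\tM(\eta^{n_i})=\pi_0(h_{g_i}^{-1}-h_\gamma^{-1})u(\gamma^{n_i})+\pi_0\bigl(h_{g_i}^{-1}\rho(\gamma^{n_i})-h_\eta^{-1}\bigr)u(\eta^{n_i}).
\]
Decomposing $u(\alpha^{n_i})$ along $\sV=\sV^+_{\alpha_+}\oplus\sV^0_\alpha\oplus\sV^-_{\alpha_-}$ for $\alpha\in\{\gamma,\eta\}$: the $\sV^+_{\alpha_+}$-parts grow exponentially but, after applying $h_{g_i}^{-1}$, land in a subspace close to $\sV^+\subset\ker\pi_0$ with exponentially small angular error dictated by the Anosov spectral gap, producing a vanishing contribution; the $\sV^-_{\alpha_-}$-parts remain bounded as convergent geometric series; and the linearly growing $\sV^0_\alpha$-pieces $n_i\pi_\alpha u(\alpha)$ interact nontrivially with the analogous pieces extracted from $\rho(\gamma^{n_i})u(\eta^{n_i})$, their linearly divergent contributions canceling exactly against $\tM(\gamma^{n_i})+\tM(\eta^{n_i})$ and leaving a bounded remainder.

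The surviving bounded terms are, via the relation $\pi_0 h^{-1}=h^{-1}\pi_{h\sV^{+,0},h\sV^{-,0}}$ from the Notation preceding Lemma~\ref{lem.nu*}, naturally expressed in terms of the maps $\nu^*_{\gamma_+,\eta_-}$, $\nu^*_{\gamma_+,\gamma_-}$, $\nu^*_{\eta_+,\eta_-}$ and $\nu^*_{\eta_+,\gamma_-}$ evaluated on the canonical points of the intersections of the affine parabolic subspaces at $\gamma_\pm$ and $\eta_\pm$ furnished by a neutralized section (Lemma~\ref{lem.neutralized1}). Applying the formula of Lemma~\ref{lem.altmarg}, the total collapses to $\beta(\gamma_+,\eta_+,\gamma_-,\eta_-)$. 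The hard part is the simultaneous bookkeeping of the competing exponential rates and the nontrivial cancellation of the linearly growing $\sV^0$ contributions; both are governed by, and rely crucially on, the Anosov dynamics of $\rho$ rather than on mere convergence of the boundary fixed points.
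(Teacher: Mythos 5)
Your overall strategy --- a direct Charette--Drumm style computation that pins down the limit exactly rather than only its $(1-\omega_0)$-symmetrization --- is the same as the paper's, and your opening algebraic identity obtained from the cocycle relation $u(g_i)=u(\gamma^{n_i})+\rho(\gamma^{n_i})u(\eta^{n_i})$ is correct. The proof fails, however, at the central analytic claim: that the exponentially growing $\sV^+_{\alpha_+}$-components of $u(\alpha^{n_i})$ contribute nothing in the limit because the angular error between $\sV^+_{(g_i)_+}$ and $\sV^+_{\gamma_+}$ is exponentially small. These two exponential rates are not independent; the rate at which $(g_i)_+=(\gamma^{n_i}\eta^{n_i})_+$ converges to $\gamma_+$ is governed by the contraction of $\rho(\gamma)^{n_i}$ near $\gamma_+$, which balances the expansion of $\rho(\gamma)^{n_i}$ on $\sV^+_{\gamma_+}$. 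Concretely, take $\sG=\SL_2(\R)$ acting on $\sV=\fsl_2$ by $\tAd$, with $\rho(\gamma)=\mathrm{diag}(\lambda,\lambda^{-1})$, $\lambda>1$, and write $u(\gamma)=aE+bH+cF$ in the standard basis. The $\sV^+_{\gamma_+}$-part of $u(\gamma^{n})$ is $a\tfrac{\lambda^{2n}-1}{\lambda^2-1}E$, while $(g_n)_+=[e_1+\epsilon_ne_2]$ with $\epsilon_n\sim s_\infty\lambda^{-2n}$, where $s_\infty\neq0$ is the affine coordinate of $\eta_+$. Hence $\sV^+_{(g_n)_+}$ is spanned by $E-\epsilon_nH-\epsilon_n^2F$, and since $\nu^*_{g_n}$ kills that vector,
\[
\nu^*_{g_n}\Bigl(a\tfrac{\lambda^{2n}-1}{\lambda^2-1}E\Bigr)=a\tfrac{\lambda^{2n}-1}{\lambda^2-1}\bigl(\epsilon_n\nu^*_{g_n}(H)+\epsilon_n^2\nu^*_{g_n}(F)\bigr)\longrightarrow \tfrac{as_\infty}{\lambda^2-1}\,\nu^*_{\gamma_+,\eta_-}(H),
\]
which is nonzero whenever $a\neq0$. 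So the $\sV^+$-terms survive with finite, generically nonzero limits; dropping them, as your bookkeeping does, loses part of the quantity that must assemble into $\beta(\gamma_+,\eta_+,\gamma_-,\eta_-)$. (The remaining steps --- the exact cancellation of the linearly growing $\sV^0$-pieces and the identification of the bounded residue with $\beta$ via Lemma \ref{lem.altmarg} --- are also only asserted, not carried out.)

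The paper avoids this exponential-versus-exponential competition entirely by a different choice of decomposition: instead of splitting the displacement $u(\alpha^{n})$ of the origin, it tracks the displacement of two well-chosen points $Q\in\sA_\eta$ and $R\in\sA_\gamma$ on the invariant affine subspaces, chosen so that $R-Q\in\sV^-_{\gamma_-}$. Because $Q$ and $R$ lie on invariant affine parabolic subspaces, their displacements are purely neutral, namely $\pi_\eta(u(\eta^{n}))$ and $\pi_\gamma(u(\gamma^{n}))$, and the only error terms are $(\rho(\gamma)^{n}-\rho(e))(Q-R)$ with $Q-R$ in the \emph{contracted} subspace $\sV^-_{\gamma_-}$, plus $n[\nu^*_{\gamma^{n}\eta^{n}}-\nu^*_{\gamma_+,\eta_-}]$ applied to fixed vectors, where linear growth loses to the exponential convergence $\nu^*_{\gamma^{n}\eta^{n}}\to\nu^*_{\gamma_+,\eta_-}$ guaranteed by the Anosov property. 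To repair your argument you would either need to adopt this choice of base points or carry out a genuinely sharper asymptotic analysis of the $\sV^+$-terms, keeping their finite limits and showing they combine with the rest to give $\beta$.
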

	\begin{proof}
		Observe that $\sV^0_\gamma$ is the unit eigenspace of $\rho(\gamma)$ and suppose $\sA_\gamma$ is the unique affine subspace preserved under the action of  $(\rho(\gamma),u(\gamma))$. Let \[\sA^-_\gamma:=\sA_\gamma+\sV^-_{\gamma_-}.\] 
		As $\gamma$ and $\eta$ are coprime we have that $\sA_\eta$ intersects $\sA^-_\gamma$ in a unique point $Q$. Also let $R$ be the point on $\sA_\gamma$ such that 
		\[(R-Q)\in\sV^-_{\gamma_-}.\]
		We note that as $Q\in\sA_\eta$ the following holds
		\[Q - (\rho(\eta),u(\eta))^{-n}Q=\pi_\eta(Q - (\rho(\eta),u(\eta))^{-n}Q)=\pi_\eta(u(\eta^n)),\]
		and as $R\in\sA_\gamma$ the following holds
		\[(\rho(\gamma),u(\gamma))^nR - R =\pi_\gamma((\rho(\gamma),u(\gamma))^nR - R) = \pi_\gamma(u(\gamma^n)).\]
		Hence, we obtain that
		\begin{align*}
			\pi_{\gamma^n\eta^n}(u(\gamma^n\eta^n))=&\ \pi_{\gamma^n\eta^n}[(\rho(\gamma),u(\gamma))^nQ-(\rho(\gamma),u(\gamma))^nR-(Q-R)\\
			&+\ (Q - (\rho(\eta),u(\eta))^{-n}Q) +((\rho(\gamma),u(\gamma))^nR - R)] \\
			=&\ \pi_{\gamma^n\eta^n}[(\rho(\gamma)^n-\rho(e))(Q-R)+ \pi_\eta(u(\eta^n))+\pi_\gamma(u(\gamma^n))].
		\end{align*}
		Therefore, using Lemma \ref{lem.nu*} we get
		\[\tM(\gamma^n\eta^n)=\nu^*_{\gamma^n\eta^n}[(\rho(\gamma)^n-\rho(e))(Q-R)+ \pi_\eta(u(\eta^n))+\pi_\gamma(u(\gamma^n))].\]
		We use Lemma \ref{lem.nu*} to also note that
		\begin{align*}
			\tM(\gamma^n)&=\nu^*_\gamma(u(\gamma^n))=\nu^*_{\gamma_+,\eta_-}\pi_\gamma(u(\gamma^n)),\\
			\tM(\eta^n)&=\nu^*_\eta(u(\eta^n))=\nu^*_{\gamma_+,\eta_-}\pi_\eta(u(\eta^n)).
		\end{align*} 
		Moreover, as $\pi_\gamma(u(\gamma^n))=n\pi_\gamma(u(\gamma))$ and $\pi_\eta(u(\eta^n))=n\pi_\eta(u(\eta))$, the following holds:
		\begin{align*}
			\tM(\gamma^n\eta^n)-\tM(\gamma^n)-\tM(\eta^n)=&\ \nu^*_{\gamma^n\eta^n}(\rho(\gamma)^n-\rho(e))(Q-R)\\
			&+\ n[\nu^*_{\gamma^n\eta^n}-\nu^*_{\gamma_+,\eta_-}]\pi_\gamma(u(\gamma))\\
			&+\ n[\nu^*_{\gamma^n\eta^n}-\nu^*_{\gamma_+,\eta_-}]\pi_\eta(u(\eta)).
		\end{align*}
		As $\rho\in\ano(\Gamma,\sG,\tR)$, we note that $\nu^*_{\gamma^{n_i}\eta^{n_i}}$ converges exponentially to $\nu^*_{\gamma_+,\eta_-}$ and we obtain that 
		\[\lim_{i\to\infty}n_i[\nu^*_{\gamma^{n_i}\eta^{n_i}}-\nu^*_{\gamma_+,\eta_-}]=0.\]
		Furthermore, as $\rho(\gamma)$ is contracting on $\sV_{\gamma_-}^-$ and $(Q-R)\in\sV^-_{\gamma_-}$ we obtain
		\[\lim_{i\to\infty}\tM(\gamma^{n_i}\eta^{n_i})-\tM(\gamma^{n_i})-\tM(\eta^{n_i})=\nu^*_{\gamma_+,\eta_-}(R-Q).\]		
		Moreover, as $(R-Q)\in\sV^-_{\gamma_-}$, using Lemma \ref{lem.nu*} we obtain \[\nu^*_{\eta_+,\gamma_-}(R-Q)=\nu^*_{\eta_+,\gamma_-}\pi_{\eta_+,\gamma_-}(R-Q)=0.\]
		As $R\in\sA_\gamma$ and $Q\in\sA_\eta$, we observe that
		\[\beta(\gamma_+,\eta_+,\gamma_-,\eta_-)=\nu^*_{\gamma_+,\eta_-}(R-Q)+\nu^*_{\eta_+,\gamma_-}(R-Q)=\nu^*_{\gamma_+,\eta_-}(R-Q).\] 
		Our result follows.
	\end{proof}
	\begin{notation}
		Henceforth, we use the following notation:
		\begin{align*} \tM\text{-}\Spec(\rho,u)&:=\overline{\left\{\frac{\tM_{(\rho,u)}(\gamma)}{\ell(\gamma)}\mid \gamma\in\Gamma\right\}},\\
			\tJd\tM\text{-}\Spec(\rho,u)&:=\overline{\left\{\frac{(\tJd_\rho(\gamma),\tM_{(\rho,u)}(\gamma))}{\ell(\gamma)}\mid \gamma\in\Gamma\right\}}.
		\end{align*}
	\end{notation}
	\begin{proposition}\label{prop.convex}
		The set $\tM\text{-}\Spec(\rho,u)$ is convex.
	\end{proposition}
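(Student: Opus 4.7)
The strategy is to produce, along every chord of $\tM\text{-}\Spec(\rho,u)$, a dense collection of points realized as ratios $\tM(\zeta_k)/\ell(\zeta_k)$, and then invoke closedness. I start with two representative points $v_i = \tM(\gamma_i)/\ell(\gamma_i)$ for infinite-order $\gamma_1, \gamma_2 \in \Gamma$. A harmless reduction (conjugating one of the $\gamma_i$ by a generic element of $\Gamma$ to separate fixed-point pairs, and using closedness of the spectrum) lets me assume the four boundary points $\{(\gamma_1)_\pm, (\gamma_2)_\pm\}$ are distinct in $\bdry$.

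Next I fix positive integers $a, b$ large enough that $\tilde\gamma := \gamma_1^a$ and $\tilde\eta := \gamma_2^b$ generate a free quasi-convex subgroup via ping-pong on $\bdry$, and set $\zeta_k := \gamma_1^{ak}\gamma_2^{bk} = \tilde\gamma^k \tilde\eta^k$. Applying Proposition~\ref{prop.crossratio} to $(\tilde\gamma, \tilde\eta)$, using $(\tilde\gamma)_\pm = (\gamma_1)_\pm$ and $(\tilde\eta)_\pm = (\gamma_2)_\pm$ together with the linearity $\tM(\gamma^N) = N\tM(\gamma)$ (which is the identity $\pi_\gamma u(\gamma^N) = N\pi_\gamma u(\gamma)$ already invoked inside the proof of Proposition~\ref{prop.crossratio}), one obtains
\[
\tM(\zeta_k) \,=\, ak\,\tM(\gamma_1) + bk\,\tM(\gamma_2) + \beta\bigl((\gamma_1)_+,(\gamma_2)_+,(\gamma_1)_-,(\gamma_2)_-\bigr) + o(1).
\]
Independently, the quasi-convex embedding $\langle \tilde\gamma, \tilde\eta \rangle \hookrightarrow \Gamma$ combined with the bi-Lipschitz identification of the Gromov-flow distance $d$ on $\cflow$ with the word metric on $\Gamma$ reduces the estimate for $\ell(\zeta_k)$ to the elementary identity $\ell_F(x^k y^k) = k\,\ell_F(x) + k\,\ell_F(y)$ in a rank-two free group, yielding
\[
\ell(\zeta_k) \,=\, ak\,\ell(\gamma_1) + bk\,\ell(\gamma_2) + O(1).
\]
Dividing and passing to the limit,
\[
\lim_{k\to\infty}\frac{\tM(\zeta_k)}{\ell(\zeta_k)} \,=\, s_{a,b}\,v_1 + (1-s_{a,b})\,v_2, \qquad s_{a,b} := \frac{a\,\ell(\gamma_1)}{a\,\ell(\gamma_1)+b\,\ell(\gamma_2)}.
\]
Since the left-hand side is a limit of elements of $\tM\text{-}\Spec(\rho,u)$ and the weights $s_{a,b}$ run through a dense subset of $(0,1)$ as $(a,b)$ varies over positive integer pairs above the ping-pong threshold, closedness of the spectrum forces the entire chord $[v_1, v_2]$ into $\tM\text{-}\Spec(\rho,u)$. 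A routine approximation step (approximating arbitrary spectral points by ratios $\tM(\gamma)/\ell(\gamma)$ and passing the chord inclusion to limits) extends the conclusion to all pairs in $\tM\text{-}\Spec(\rho,u)$.

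The main obstacle is the length asymptotic $\ell(\zeta_k) = ak\,\ell(\gamma_1) + bk\,\ell(\gamma_2) + O(1)$: it is folklore in hyperbolic-group theory but is not explicitly recorded in the preceding sections, so one has to either insert a short lemma, exploiting quasi-convexity of the ping-pong subgroup together with the variational characterization $\ell(\gamma) = \inf_y d(y, \gamma y)$ realized on the axis in $\cflow$, or cite it from a standard source. Every other ingredient is a direct consequence of Proposition~\ref{prop.crossratio} and closedness of the spectrum.
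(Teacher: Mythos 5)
Your proof is correct and takes essentially the same route as the paper: apply Proposition \ref{prop.crossratio} to the powers $\gamma_1^{a}$, $\gamma_2^{b}$, combine the resulting asymptotic for $\tM(\gamma_1^{ak}\gamma_2^{bk})$ with the length asymptotic $\ell(\gamma_1^{ak}\gamma_2^{bk})= ak\,\ell(\gamma_1)+bk\,\ell(\gamma_2)+o(k)$, and conclude by density of the rational weights together with closedness of the spectrum. You are in fact more careful than the paper, which neither separates the fixed points nor records the length asymptotic; the one caveat is that this length lemma is better obtained from thin-quadrilateral (Gromov product) estimates on the axes than from the bi-Lipschitz identification with the word metric you suggest, since a quasi-isometry only preserves stable translation lengths up to multiplicative and additive constants.
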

	\begin{proof}
		We use Proposition \ref{prop.crossratio} to deduce that 
		\[\lim_{n\to\infty}\frac{1}{n}\left[\tM_{(\rho,u)}(\gamma^n\eta^n)-\tM_{(\rho,u)}(\gamma^n)-\tM_{(\rho,u)}(\eta^n)\right]=0.\]
		Hence, it follows that 
		\begin{align*}
			\lim_{n\to\infty}\frac{\tM_{(\rho,u)}(\gamma^{pn}\eta^{qn})}{\ell(\gamma^{pn}\eta^{qn})}&=\lim_{n\to\infty}\frac{\tM_{(\rho,u)}(\gamma^{pn}\eta^{qn})/n}{\ell(\gamma^{pn}\eta^{qn})/n}=\frac{p \tM_{(\rho,u)}(\gamma)+q \tM_{(\rho,u)}(\eta)}{p \ell(\gamma)+q\ell(\eta)}\\
			&=\frac{p\ell(\gamma)}{p\ell(\gamma)+q\ell(\eta)}\frac{\tM_{(\rho,u)}(\gamma)}{\ell(\gamma)}+\frac{q\ell(\eta)}{p\ell(\gamma)+q\ell(\eta)}\frac{\tM_{(\rho,u)}(\eta)}{\ell(\eta)}.
		\end{align*}
		As the rationals are dense in the real numbers, our result follows.
	\end{proof}
	
	\begin{proposition}\label{prop.convexJM}
		Let $(\rho,u):\Gamma\to\AG$ be an injective homomorphism such that $[\rho]\in\ano(\Gamma,\sG,\tR)$. Then $\tJd\tM\text{-}\Spec(\rho,u)$ is a convex set.
	\end{proposition}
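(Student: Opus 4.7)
The plan is to adapt the argument of Proposition \ref{prop.convex} to the pair-valued function $(\tJd_\rho,\tM_{(\rho,u)})$. Proposition \ref{prop.crossratio} already supplies the asymptotic additivity
\[\lim_{n\to\infty}\frac{1}{n}[\tM_{(\rho,u)}(\gamma^n\eta^n)-\tM_{(\rho,u)}(\gamma^n)-\tM_{(\rho,u)}(\eta^n)]=0\]
whenever $\gamma,\eta\in\Gamma$ are infinite-order elements whose four boundary fixed points $\gamma_\pm,\eta_\pm$ are pairwise distinct. The one ingredient I need beyond this is the linear analogue
\[\lim_{n\to\infty}\frac{1}{n}\tJd_\rho(\gamma^n\eta^n)=\tJd_\rho(\gamma)+\tJd_\rho(\eta),\]
which I would establish first.

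To prove this Jordan additivity I would use the Anosov property of $\rho$ together with the characterization recalled in Section \ref{sec.AR}. By Proposition \ref{prop.GW}, $\tR\circ\rho$ is $(\sQ^+,\sQ^-)$-Anosov in $\SL(\sV)$, so it has uniform gaps in both singular values and eigenvalues. The transverse limit maps $\xi^\pm_\rho$ guarantee that for $n$ large the expanding flag of $\rho(\gamma^n)$ is transverse to the contracting flag of $\rho(\eta^n)$, so the standard singular-value product estimate yields
\[\log\sigma_1(\rho(\gamma^n\eta^n))=\log\sigma_1(\rho(\gamma^n))+\log\sigma_1(\rho(\eta^n))+O(1).\]
Applying the same reasoning after post-composing with each fundamental representation of $\sG$ (which remains Anosov), and using that for Anosov representations log-eigenvalues and log-singular-values differ by $O(1)$, I obtain the additivity for every fundamental weight of $\fa$; after dividing by $n$ and taking $n\to\infty$ this gives the claimed additivity of $\tJd_\rho$ itself.

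With both additivity statements in hand I mirror the proof of Proposition \ref{prop.convex}. For $\gamma,\eta$ as above and $p,q\in\N$, the element $\gamma^{pn}\eta^{qn}$ is loxodromic for $n$ large, and
\[\frac{(\tJd_\rho,\tM_{(\rho,u)})(\gamma^{pn}\eta^{qn})}{n}\longrightarrow p(\tJd_\rho(\gamma),\tM_{(\rho,u)}(\gamma))+q(\tJd_\rho(\eta),\tM_{(\rho,u)}(\eta)),\]
while $\ell(\gamma^{pn}\eta^{qn})/n\to p\ell(\gamma)+q\ell(\eta)$ is a purely hyperbolic-group fact carried over unchanged from Proposition \ref{prop.convex}. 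Dividing numerator by denominator and letting $p,q$ range over $\N$ produces, upon closure, the whole convex segment joining $(\tJd_\rho,\tM_{(\rho,u)})(\gamma)/\ell(\gamma)$ and $(\tJd_\rho,\tM_{(\rho,u)})(\eta)/\ell(\eta)$ inside $\tJd\tM\text{-}\Spec(\rho,u)$; density of $\Q$ in $\R$ then fills in every convex combination.

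The main obstacle is the Jordan additivity claim. Although it is in spirit a standard consequence of the Anosov hypothesis, writing it out cleanly requires handling each fundamental weight and controlling the $O(1)$ errors uniformly in $n$. Once that is done the remainder of the argument is essentially a template match with Proposition \ref{prop.convex}.
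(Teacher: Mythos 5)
Your skeleton coincides with the paper's: the paper proves this proposition in one line by combining Proposition \ref{prop.convex} with Corollary 4.1 of Benoist, and that citation supplies precisely the Jordan-projection additivity
$\lim_n\frac{1}{n}\tJd_\rho(\gamma^n\eta^n)=\tJd_\rho(\gamma)+\tJd_\rho(\eta)$
that you set out to prove by hand; everything after that (Proposition \ref{prop.crossratio} for the $\tM$-component, $\ell(\gamma^{pn}\eta^{qn})/n\to p\ell(\gamma)+q\ell(\eta)$, and the rational convex-combination computation) is a faithful rerun of Proposition \ref{prop.convex}. So the only real divergence is your self-contained derivation of the additivity, and there you should be careful about one point. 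The hypothesis $[\rho]\in\ano(\Gamma,\sG,\tR)$ only makes $\rho$ Anosov with respect to $(\sP^+,\sP^-)$, the stabilizers of $\fp^{\pm,0}$ determined by $X_0$, and these need not be minimal parabolics; correspondingly $\tR\circ\rho$ is $(\sQ^+,\sQ^-)$-Anosov, which yields uniform singular-value gaps only at the indices $\dim\sV^+$ and $\dim\sV^{+,0}$, not at index $1$ and not in every fundamental representation of $\sG$. Your product estimate for $\log\sigma_1$, the $O(1)$ comparison between $\log\lambda_1$ and $\log\sigma_1$, and the reduction to fundamental weights all require proximality (hence a gap) in each fundamental representation, i.e. Anosov-ness with respect to the minimal parabolic; in the uncontrolled directions of $\fa$ the elements $\rho(\gamma)$ need not even be loxodromic and the $O(1)$ bounds are unavailable. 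Benoist's result avoids this by working with Zariski-dense subsemigroups rather than gaps. So either state the extra hypothesis under which your argument runs (it does hold in the paper's applications, where $\tR=\tAd$ and genericity of $X_0$ forces all the root gaps), or replace this step by the citation as the paper does; with that repair the rest of your argument is correct.
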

	
	\begin{proof}
		Follows from Corollary 4.1 of Benoist \cite{Ben1} and Proposition \ref{prop.convex}.
	\end{proof}
	
	We note that similar convexity results as that of Propositions \ref{prop.convex} and \ref{prop.convexJM} in a related, but slightly different, setting have also been obtained by Kassel--Smilga \cite{KaSm}.
	
	\section{Proper affine actions}\label{sec.4}
	
	\subsection{Criteria for proper actions}\label{sec.CPA}
	In this subsection we give various criterions for the existence of proper affine actions.
	
	\begin{definition}
		Suppose $\{\gamma_n\}_{n\in\N}\subset\Gamma$ is a sequence such that the word length of $\gamma_n$ goes to infinity as $n$ goes to infinity and
		\[\lim_{n\to\infty}\gamma_n^+=a\neq b=\lim_{n\to\infty}\gamma_n^-.\]
		Then we say that the sequence $\{\gamma_n\}_{n\in\N}$ is divergent.
	\end{definition}
	
	We observe that the translational length of a diverging sequence always diverges.
	
	\begin{theorem}\label{thm.proper}
		Let $\tR:\sG\to\SL(\sV)$ be an injective homomorphism, $\rho\in\ano(\Gamma,\sG,\tR)$ and $u$ be a $\sV$ valued cocycle with respect to $\rho$. Then the action of $(\rho,u)(\Gamma)$ on $\sV$ is not proper if and only if there exists a diverging sequence $\{\gamma_n\}_{n\in\N}$ inside $\Gamma$ such that ${\tM_{(\rho,u)}(\gamma_n)}$ stays bounded. 
	\end{theorem}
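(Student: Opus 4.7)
For the ``if'' direction ($\Leftarrow$), I use the neutralized section $\sigma_u : \flow \to \acV$ from Lemma \ref{lem.neutralized1} together with its lift $\tilde\sigma_u : \cflow \to \sV$. Given a divergent sequence $\{\gamma_n\}$ with $\tM(\gamma_n)$ bounded, after extraction I may assume $\gamma_n^\pm \to a^\pm$ with $a^+ \neq a^-$, so that the axis base-points $p_n := (\gamma_n^+, \gamma_n^-, 0) \in \cflow$ lie in a relatively compact subset and $Y_n := \tilde\sigma_u(p_n)$ is bounded in $\sV$. Lemma \ref{lem.neutralized2} together with the $(\rho,u)$-equivariance of $\tilde\sigma_u$ forces $(\rho(\gamma_n), u(\gamma_n))Y_n - Y_n \in \sV^0_{\gamma_n}$, which identifies $Y_n$ with a point on the unique invariant affine subspace of Lemma \ref{lem.stableaffineplane}. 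Applying Lemma \ref{lem.conjugate} with the admissible choice $X_n = Y_n$ gives the displacement formula
\[
(\rho(\gamma_n), u(\gamma_n)) Y_n - Y_n = h_n \tM(\gamma_n),
\]
where $h_n \in \sG$ conjugates $\rho(\gamma_n)_\tth$ to $\exp(\tJd_{\gamma_n})$ and therefore varies continuously with the transverse pair $(\xi^+(\gamma_n^+), \xi^-(\gamma_n^-))$. Continuity of the Anosov limit maps and transversality of the limit pair at $(a^+, a^-)$ make $\{h_n\}$ bounded modulo the compact factor $\sM$; combined with bounded $\tM(\gamma_n)$, the displacement is bounded, placing both $Y_n$ and its image in a fixed compact subset of $\sV$. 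This contradicts properness.

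For the ``only if'' direction ($\Rightarrow$), non-properness yields $\gamma_n \in \Gamma$ escaping to infinity with $Y_n, Z_n$ in a compact $K \subset \sV$ satisfying $\rho(\gamma_n) Y_n + u(\gamma_n) = Z_n$. Extract a subsequence with $\gamma_n^\pm \to a^\pm$. Using $\nu^*_{\gamma_n} = \pi_0 h_n^{-1}$ and the identity $\pi_0 \circ (m_n \exp(\tJd_{\gamma_n})) = m_n \circ \pi_0$ (valid since $\exp(\tJd_{\gamma_n})$ acts as the identity on $\sV^0$ and $\sM$ preserves $\sV^0$), I compute
\[
\tM(\gamma_n) = \nu^*_{\gamma_n}(u(\gamma_n)) = \nu^*_{\gamma_n}(Z_n) - m_n \nu^*_{\gamma_n}(Y_n).
\]
Provided $a^+ \neq a^-$, continuity of $\xi^\pm$ makes $\nu^*_{\gamma_n}$ converge to the bounded operator $\nu^*_{a^+, a^-}$; together with $m_n \in \sM$ compact and boundedness of $Y_n, Z_n$, this gives bounded $\tM(\gamma_n)$ and exhibits the required divergent sequence.

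The main obstacle is establishing $a^+ \neq a^-$ in this second direction. My approach is to invoke the uniform singular-value gap inherited from the Anosov hypothesis (via Proposition \ref{prop.GW}): if $a^+ = a^-$ the flags $\xi^\pm(\gamma_n^\pm)$ would converge to a non-transverse pair, causing the decomposition $\sV = \sV^+_{\gamma_n^+} \oplus \sV^0_{\gamma_n} \oplus \sV^-_{\gamma_n^-}$ to degenerate; the exponential expansion of $\rho(\gamma_n)$ along $\sV^+_{\gamma_n^+}$ combined with $Y_n = \rho(\gamma_n)^{-1}(Z_n - u(\gamma_n))$ and the boundedness of $Y_n, Z_n$ would then concentrate the cocycle values $u(\gamma_n)$ in a direction collapsing to the repelling flag, contradicting the Anosov transversality maintained along the sequence. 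As a fallback, one can replace $\gamma_n$ by conjugates $g_n^{-1}\gamma_n g_n$ whose axes meet a compact fundamental region of $\cflow$: this automatically produces a transverse limit pair and modifies $\tM$ only by an element of $(I - m_{\gamma_n})(\sV^0)$, preserving boundedness.
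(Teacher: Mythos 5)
Your argument is essentially the paper's: both directions run through the invariant affine subspaces $\sA_{\gamma_n}$, the displacement identity $(\rho,u)(\gamma_n)P-P=\nu_{\gamma_n}\tM_{(\rho,u)}(\gamma_n)$ for $P\in\sA_{\gamma_n}$, and convergence of the affine limit data along a sequence whose endpoints converge to a transverse pair. Your use of the neutralized section to produce the base points $Y_n$ is exactly how the paper's Proposition \ref{prop.partialaffineano} obtains the convergence $\sA_{\gamma_n}\to\sA_{a,b}$ that it invokes, and your identity $\tM(\gamma_n)=\nu^*_{\gamma_n}(Z_n)-m_n\nu^*_{\gamma_n}(Y_n)$ is the paper's $\nu^*_{\gamma_n}(Q_n-P_n)=\tM_{(\rho,u)}(\gamma_n)$ in different clothing. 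So the core of the proposal matches the paper's proof.

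The one place you go beyond the paper is the discussion of $a^+\neq a^-$ in the forward direction, and there both of your patches are unconvincing. The first is not an argument: transversality of $\xi^+(\gamma_n^+)$ and $\xi^-(\gamma_n^-)$ for each fixed $n$ is not threatened by the limits coinciding, and no actual contradiction is extracted from the claimed ``concentration'' of $u(\gamma_n)$. The second is circular: conjugating $\gamma_n$ by $g_n$ replaces the witnesses $Y_n,Z_n$ by $(\rho,u)(g_n)^{-1}Y_n$ and $(\rho,u)(g_n)^{-1}Z_n$, which are no longer bounded, so the conjugated sequence does not witness non-properness; and although conjugation changes $\tM$ only by an element of $(1-m)\sV^0$, transporting boundedness of $\tM$ to the conjugates presupposes that $\tM(\gamma_n)$ is bounded for the original sequence --- which is precisely the step that required the transverse limit pair in the first place. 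You should be aware, however, that the paper does not resolve this point either: it builds the condition $a\neq b$ into its definition of a \emph{diverging} sequence and simply asserts that non-properness produces one. Relative to the paper's own proof this is therefore not a gap you introduced, but neither of your proposed repairs works as stated and they should not be retained.
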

	\begin{proof}
		Suppose $\{\gamma_n\}_{n\in\N}$ be a diverging sequence of elements of $\Gamma$ such that $\tM_{(\rho,u)}(\gamma_n)$ stays bounded. Suppose $\sA_{\gamma_n}$ is the unique affine subspace which is preserved by $(\rho,u)(\gamma_n)$ and which is parallel to $\sV^0_{\gamma_n}$, the unit eigenspace of $(\rho,u)(\gamma_n)$. Suppose $\lim_{n\to\infty}\gamma_n^+=a$ and $\lim_{n\to\infty}\gamma_n^-=b$. We use Proposition \ref{prop.partialaffineano} to observe that $\sA_{\gamma_n}$ converge to $\sA_{a,b}$. We choose a sequence $\{X_n\}_{n\in\N}$, such that $X_n$ lies in  $\sA_{\gamma_n}$ and $\lim_{n\to\infty}X_n=X\in \sA_{a,b}$. Then 
		\[(\rho,u)(\gamma_n)X_n-X_n=\nu_{\gamma_n}\tM_{(\rho,u)}(\gamma_n)\in{\sV_{\gamma_n}^0}.\] 
		As $X_n$ converge to $X$ and $\tM_{(\rho,u)}(\gamma_n)$ stays bounded, we conclude that the sequence $\{(\rho,u)(\gamma_n)X_n\}_{n\in\N}$ also stays bounded. Therefore, the action of $(\rho,u)(\Gamma)$ on $\sV$ is not proper.
		
		On the other way around, suppose the action of $(\rho,u)(\Gamma)$ on $\sV$ is not proper. We use Theorem 5.9 of Guichard--Wienhard \cite{GW2} to deduce that there exists a diverging sequence $\{\gamma_n\}_{n\in\N}$ of elements of $\Gamma$ and another sequence $\{X_n\}_{n\in\N}$ of points in $\sV$ which converges to $X$ such that the sequence $\{(\rho,u)(\gamma_n)X_n\}_{n\in\N}$ also converges to some point $Y$ (for more details see the first two paragraphs of page 21 of Ghosh--Treib \cite{GT}). Suppose $\lim_{n\to\infty}\gamma_n^+=a$ and $\lim_{n\to\infty}\gamma_n^-=b$. Let $P_n$ (resp. $Q_n$) be that unique point on $\sA_{\gamma_n}$ such that 
		\[(X_n-P_n)\in{\sV^+_{\gamma_n^+}\oplus \sV^-_{\gamma_n^-}}\left(\text{resp.  \ }((\rho,u)(\gamma_n)X_n-Q_n)\in\sV^+_{\gamma_n^+}\oplus \sV^-_{\gamma_n^-}\right).\]
		We use Proposition \ref{prop.partialaffineano} to observe that $\sA_{\gamma_n}$ converge to $\sA_{a,b}$. Hence, we obtain that the sequence $\{P_n\}_{n\in\N}$ converge to some point $P\in\sA_{a,b}$ and $\{Q_n\}_{n\in\N}$ converge to some point $Q\in\sA_{a,b}$. Moreover, 
		\[\nu^*_{\gamma_n}(Q_n-P_n)=\tM_{(\rho,u)}(\gamma_n)\] 
		and it follows that $\tM_{(\rho,u)}(\gamma_n)$ stays bounded. 
	\end{proof}

	\begin{proposition}\label{prop.notproper}
		Let $\tR:\sG\to\SL(\sV)$ be an injective homomorphism, $\rho\in\ano(\Gamma,\sG,\tR)$ and $u$ be a $\sV$ valued cocycle with respect to $\rho$. Moreover, $(\rho,u)(\Gamma)$ does not act properly on $\sV$. Then $0\in\tM\text{-}\Spec(\rho,u)$.
	\end{proposition}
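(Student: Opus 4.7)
The plan is to combine Theorem \ref{thm.proper} with the characterization of divergent sequences in terms of translation length. Since $(\rho,u)(\Gamma)$ does not act properly on $\sV$, Theorem \ref{thm.proper} provides a divergent sequence $\{\gamma_n\}_{n\in\N}\subset\Gamma$ such that the Margulis invariants $\tM_{(\rho,u)}(\gamma_n)$ remain bounded in $\sV^0$.

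Next I would invoke the observation made immediately before Theorem \ref{thm.proper}, namely that a sequence $\{\gamma_n\}_{n\in\N}$ is divergent if and only if its translation lengths $\ell(\gamma_n)$ diverge. In particular, after passing to a subsequence if necessary, we may assume that $\ell(\gamma_n)\to\infty$ and that all $\gamma_n$ are of infinite order (so $\tM_{(\rho,u)}(\gamma_n)$ and $\ell(\gamma_n)$ are both well-defined).

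The conclusion is then immediate: since $\{\tM_{(\rho,u)}(\gamma_n)\}_{n\in\N}$ is bounded and $\ell(\gamma_n)\to\infty$, the quotients satisfy
\[
\lim_{n\to\infty}\frac{\tM_{(\rho,u)}(\gamma_n)}{\ell(\gamma_n)}=0.
\]
Each of these quotients lies in the set whose closure defines $\tM\text{-}\Spec(\rho,u)$, so $0$ belongs to this closure, which is what we wanted.

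There is no real obstacle here; the entire content was already packaged into Theorem \ref{thm.proper} together with the equivalence between divergence of a sequence in $\Gamma$ and divergence of its translation lengths. The proposition should therefore be presented as a short corollary of Theorem \ref{thm.proper}.
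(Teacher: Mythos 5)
Your proof is correct and follows exactly the paper's own argument: apply Theorem \ref{thm.proper} to obtain a divergent sequence with bounded Margulis invariants, use the equivalence between divergence of a sequence and divergence of its translation lengths, and conclude that the ratios $\tM_{(\rho,u)}(\gamma_n)/\ell(\gamma_n)$ tend to $0$, which therefore lies in the closure defining $\tM\text{-}\Spec(\rho,u)$. No differences worth noting.
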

	\begin{proof}
		We use Proposition \ref{thm.proper} to obtain the existence of a sequence $\{\gamma_n\}_{n\in\N}$ inside $\Gamma$ such that as $n$ goes to infinity, $\ell(\gamma_n)$ diverges to infinity but $\tM_{(\rho,u)}(\gamma_n)$ stays bounded. It follows that 
		\[\lim_{n\to\infty}\frac{\tM_{(\rho,u)}(\gamma_n)}{\ell(\gamma_n)}=0.\]
		Hence, $0\in\tM\text{-}\Spec(\rho,u)$.
	\end{proof}
	
	We note that similar results as Theorem \ref{thm.proper} and Proposition \ref{prop.notproper} in a related, but slightly different, setting have independently been obtained by Kassel--Smilga \cite{KaSm}.
	
	\begin{lemma}\label{lem.margnu}
		Suppose $t_\gamma$ is the period of the periodic orbit corresponding to $\gamma\in\Gamma$, $\sigma:\flow\to\acV$ is a neutralized section and  $\tilde{\sigma}:\cflow\to\sV$ be the map induced by the lift of $\sigma$. Then the following holds:
		\[\nu_\gamma\tM_{\rho,u}(\gamma)=\int_0^{t_\gamma}\nabla_\phi\tilde{\sigma}(\gamma_+,\gamma_-,s)ds .\]
	\end{lemma}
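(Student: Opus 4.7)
The plan is to carry out a direct fundamental-theorem-of-calculus computation along the axis of $\gamma$, using the equivariance of $\tilde\sigma$ and the neutralized hypothesis to identify the resulting vector with $\nu_\gamma\tM_{\rho,u}(\gamma)$.

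First I would observe that the flow on the bundle $\cflow\times\sV$ is $\phi_t(p,Y)=(\phi_tp,Y)$, so it acts trivially on the $\sV$-fibre. Consequently, viewing $\tilde\sigma:\cflow\to\sV$ as a vector-valued function we have
\[
\nabla_\phi\tilde\sigma(\gamma_+,\gamma_-,s)=\left.\frac{\partial}{\partial t}\right|_{t=0}\tilde\sigma(\gamma_+,\gamma_-,s+t)=\partial_s\tilde\sigma(\gamma_+,\gamma_-,s).
\]
Since $\sigma$ is H\"older continuous and differentiable along flow lines, the fundamental theorem of calculus yields
\[
\int_0^{\ell(\gamma)}\nabla_\phi\tilde\sigma(\gamma_+,\gamma_-,s)\,ds=\tilde\sigma(\gamma_+,\gamma_-,\ell(\gamma))-\tilde\sigma(\gamma_+,\gamma_-,0).
\]

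Next I would use the fact that the axis $\{(\gamma_+,\gamma_-,t):t\in\R\}$ is translated by $\ell(\gamma)$ under the action of $\gamma$, together with the equivariance $\tilde\sigma(\gamma\cdot p)=\rho(\gamma)\tilde\sigma(p)+u(\gamma)$, to rewrite the right-hand side as
\[
(\rho(\gamma)-I)\tilde\sigma(\gamma_+,\gamma_-,0)+u(\gamma).
\]
The neutralized hypothesis $\nabla_\phi\tilde\sigma(p)\in\sV^0_p$ then forces this difference to lie in $\sV^0_\gamma$, which is exactly the condition that $X_0:=\tilde\sigma(\gamma_+,\gamma_-,0)$ belongs to the unique affine subspace $\sA_\gamma$ parallel to $\sV^0_\gamma$ preserved by $(\rho(\gamma),u(\gamma))$ (see Lemma \ref{lem.stableaffineplane}).

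Finally, I would identify the translation vector along $\sA_\gamma$ with $\nu_\gamma\tM_{\rho,u}(\gamma)$. Applying Lemma \ref{lem.conjugate} pick $(h,X)\in\AG$ with $(h,X)^{-1}(\rho(\gamma),u(\gamma))(h,X)=(m\exp(\tJd_{\rho(\gamma)}),\tM_{\rho,u}(\gamma))$; then $h\sV^0=\sV^0_\gamma$, and for any $Z\in\sV^0$ the point $P=hZ+X$ lies on $\sA_\gamma$, and a direct computation gives
\[
(\rho(\gamma),u(\gamma))P-P=h\,\tM_{\rho,u}(\gamma)=\nu_\gamma\tM_{\rho,u}(\gamma),
\]
using that $m\exp(\tJd_{\rho(\gamma)})$ fixes $\sV^0$ pointwise and the remark identifying $\nu(\gamma_+,\gamma_-)=h\pi_0$. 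Since this translation vector is independent of the chosen point of $\sA_\gamma$, applying it at $P=X_0$ closes the chain of equalities and yields the claimed identity.

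The only delicate point is step three, namely pinning down precisely that the translation vector along $\sA_\gamma$ equals $\nu_\gamma\tM_{\rho,u}(\gamma)$; once Lemma \ref{lem.conjugate} and the explicit description $\nu(\gamma_+,\gamma_-)=h\pi_0$ are in hand this is a short bookkeeping computation, and the rest of the argument is essentially the fundamental theorem of calculus plus equivariance.
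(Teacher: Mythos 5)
Your proposal is correct and follows essentially the same route as the paper's proof: the fundamental theorem of calculus along the axis, the equivariance identity $\tilde{\sigma}(\gamma_+,\gamma_-,\ell(\gamma))=\rho(\gamma)\tilde{\sigma}(\gamma_+,\gamma_-,0)+u(\gamma)$, and the neutralized hypothesis to discard the $(\rho(\gamma)-I)\tilde{\sigma}(\gamma_+,\gamma_-,0)$ term. The only cosmetic difference is at the end, where the paper simply applies $\pi_\gamma=\nu_\gamma\nu^*_\gamma$ to $u(\gamma)$ via Lemma \ref{lem.nu*}, whereas you identify $\tilde{\sigma}(\gamma_+,\gamma_-,0)$ as a point of $\sA_\gamma$ and compute the translation vector through Lemma \ref{lem.conjugate}; both yield $\nu_\gamma\tM_{\rho,u}(\gamma)$.
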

	\begin{proof}
		We observe that
		\begin{align*}                  \int_0^{t_\gamma}\nabla_\phi\tilde{\sigma}(\gamma_+,\gamma_-,s)ds&=\pi_{\gamma}\left[\int_0^{t_\gamma}\nabla_\phi\tilde{\sigma}(\gamma_+,\gamma_-,s)ds\right]\\
			&=\pi_{\gamma}[\tilde{\sigma}(\gamma_+,\gamma_-,t_\gamma)-\tilde{\sigma}(\gamma_+,\gamma_-,0)]\\
			&=\pi_{\gamma}[\rho(\gamma)\tilde{\sigma}(\gamma_+,\gamma_-,0)+u(\gamma)-\tilde{\sigma}(\gamma_+,\gamma_-,0)]\\
			&=\pi_{\gamma}[u(\gamma)]=\nu_\gamma\nu^*_\gamma[ u(\gamma)]=\nu_\gamma\tM_{\rho,u}(\gamma).
		\end{align*}
		We conclude our result.   
	\end{proof}
	
	\begin{proposition}\label{prop.proper}
		Let $\tR:\sG\to\SL(\sV)$ be an injective homomorphism, $\rho\in\ano(\Gamma,\sG,\tR)$ and $u$ be a $\sV$ valued cocycle with respect to $\rho$. Moreover, let $\sL$ be an one dimensional vector subspace of $\sV^0$ and $0\in \tM\text{-}\Spec(\rho,u)\subset \sL$. Then the action of $(\rho,u)(\Gamma)$ on $\sV$ is not proper.
	\end{proposition}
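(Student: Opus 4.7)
The plan is to invoke Theorem \ref{thm.proper}, which reduces the claim to producing a divergent sequence $\{\gamma_n\}\subset\Gamma$ with bounded Margulis invariants. By Proposition \ref{prop.convex}, $\tM\text{-}\Spec(\rho,u)$ is a convex subset of the one-dimensional space $\sL$, hence a closed interval containing $0$. I would split the argument on whether this interval is degenerate.

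If the spectrum equals $\{0\}$, then every infinite-order $\gamma\in\Gamma$ satisfies $\tM(\gamma)/\ell(\gamma)=0$, so $\tM(\gamma)=0$. The sequence $\{\gamma^n\}_{n\in\N}$ is then divergent with identically vanishing Margulis invariants, and Theorem \ref{thm.proper} closes the case. Otherwise the interval is nondegenerate. Using $\tM(\gamma^{-1})=-\omega_0\tM(\gamma)$ from Theorem \ref{thm.alphabeta} together with closure of the spectrum under $\gamma\mapsto\gamma^{-1}$, the element $\omega_0$ preserves $\sL$ and acts on it by a sign. In the principal case where this sign is $+1$, the spectrum is symmetric about $0$, and I can select loxodromic $\gamma,\eta\in\Gamma$ whose attracting and repelling fixed points give four distinct boundary points and such that $s:=\tM(\gamma)$ and $t:=\tM(\eta)$ have strictly opposite signs in $\sL\cong\R$.

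The core step is then a mild extension of Proposition \ref{prop.crossratio} to mixed exponents, obtained by rerunning the same argument with the preserved affine planes $\sA_\gamma,\sA_\eta$ and the points $R\in\sA_\gamma$, $Q\in\sA_\eta$, but applying $(\rho,u)(\gamma)^{p_n}$ and $(\rho,u)(\eta)^{q_n}$ in place of the common power. This should give
\[
\tM(\gamma^{p_n}\eta^{q_n})\;=\;p_n s+q_n t+\beta(\gamma_+,\eta_+,\gamma_-,\eta_-)+o(1)
\]
as $p_n,q_n\to\infty$, with error controlled by the exponential Anosov contraction used in the original proof. Choosing positive integers $p_n,q_n\to\infty$ with $p_n s+q_n t\to 0$, which is possible by rational approximation of $-t/s>0$, then yields a divergent sequence with bounded Margulis invariants, and Theorem \ref{thm.proper} concludes.

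The main obstacle I anticipate is the remaining subcase in which $\omega_0$ acts as $-1$ on $\sL$, so that $\tM(\gamma)=\tM(\gamma^{-1})$ for every loxodromic $\gamma$ and the spectrum could a priori be one-sided, say of the form $[0,b]$; the clean cancellation above is unavailable there. The natural workaround is to exploit that $0$ still lies only in the closure of the spectrum to extract a sequence $\{\eta_n\}$ of loxodromic elements with $\|\tM(\eta_n)\|=o(\ell(\eta_n))$, and to compensate via a well-chosen power $p_n$ of a fixed loxodromic $\gamma$ so that the mixed-exponent expansion keeps $\tM(\gamma^{p_n}\eta_n)$ bounded; verifying this rigorously, or alternatively ruling the subcase out via the non-swinging hypothesis on $\tR$, is where the real care is required.
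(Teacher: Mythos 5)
Your route is genuinely different from the paper's, but it has a gap at exactly the point you flag, and that point is the heart of the proposition. The paper does not go through Theorem \ref{thm.proper} at all: it takes a neutralized section, writes $\nabla_\phi\sigma_0=f_1\nu_{E_1}+\dots+f_m\nu_{E_m}$ in a basis of $\sV^0$ with $E_1\in\sL$, uses the hypothesis $\tM\text{-}\Spec(\rho,u)\subset\sL$ together with Liv\v sic's theorem to kill the components $f_2,\dots,f_m$, and then uses $0\in\tM\text{-}\Spec(\rho,u)$ to produce a $\phi$-invariant probability measure $\mu$ on $\flow$ with $\int f_1\,d\mu=0$ (a weak* limit of orbital measures). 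Connectedness of $\flow$ then yields points with vanishing time averages, hence non-properness of the $\R$-action on $\acV$, and finally non-properness of $(\rho,u)(\Gamma)$ on $\sV$ via Lemma 5.2 of Goldman--Labourie--Margulis. The crucial feature of this argument is that it only needs $\tM(\gamma_n)/\ell(\gamma_n)\to 0$; it never has to manufacture a divergent sequence with \emph{bounded} Margulis invariants.

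Your cancellation argument does work when the spectrum is $\{0\}$, and when you can find two elements whose invariants have strictly opposite signs in $\sL\cong\R$ (the mixed-exponent version of Proposition \ref{prop.crossratio} is a routine adaptation since $p_n$ and $q_n$ grow at comparable rates, and Dirichlet approximation keeps $p_ns+q_nt$ bounded). But the remaining subcase --- $\omega_0$ acting as $-1$ on $\sL$, so that $\tM(\gamma^{-1})=\tM(\gamma)$ and the spectrum may be of the form $[0,b]$ with $0$ not attained --- is not a pathology to be ruled out by the non-swinging hypothesis: it is precisely the classical situation of $\SL_2(\R)\ltimes\fsl_2(\R)$ (equivalently $\mathsf{SO}_{2,1}\ltimes\R^3$), where the Margulis invariant is inversion-invariant and one-sided spectra are exactly what occur. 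Your proposed workaround fails there: if every element has $\tM(\gamma)>0$, then compensating $\tM(\eta_n)$ (which may be unbounded even though $\tM(\eta_n)/\ell(\eta_n)\to0$) by a power $\gamma^{p_n}$ only adds another positive quantity, and passing to inverses does not help since $\tM(\gamma^{-1})=\tM(\gamma)$. Closing this case is exactly why Goldman--Labourie--Margulis introduced the diffused invariant and the measure-theoretic argument that the paper's proof reproduces; without it, your proof is incomplete in the main case of interest.
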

	\begin{proof}
		Suppose $\sigma:\flow\to\cV$ be a H\" older continuous section which is differentiable along flow lines of $\flow$ and $\sigma_0:\flow\to\sV^0$ be such that $\nabla_\phi^0\sigma=\nabla_\phi\sigma_0$. Now without loss of generality we choose $0\neq E_1\in\sL$ and extend it to a basis $\{E_k\}_{k=1}^m$ of $\sV^0$. Then there exists functions $f_i:\flow\to\R$ for all $1\leq i\leq m$ such that $\nabla_\phi\sigma_0=f_1\nu_{E_1}+\dots+f_m\nu_{E_m}$. As $\tM\text{-}\Spec(\rho,u)\subset \sL$, we use Lemma \ref{lem.margnu} to observe that for all non torsion element $\gamma\in\Gamma$,
		\[\left(\int_0^{t_\gamma}f_1(\gamma_+,\gamma_-,s)ds\right)E_1=\tM_{\rho,u}(\gamma),\]
		and for all $2\leq i\leq m$ we have
		\[\int_0^{t_\gamma}f_i(\gamma_+,\gamma_-,s)ds=0.\]
		We use Liv\v sic's theorem \cite{Liv} to deduce that for all $2\leq i\leq m$ there exists $g_i:\flow\to\R$ such that $f_i=\frac{\partial g_i}{\partial t}$. Hence, the section $\sigma_1:=[\sigma_0-(g_2\nu_{E_2}+\dots+g_m\nu_{E_m})]$ satisfies
		\[\nabla_\phi\sigma_1=f_1\nu_{E_1}.\]
		Moreover, as $0\in\tM\text{-}\Spec(\rho,u)$ and the metric $d$ on $\cflow$ is bi-Lipschitz to the product metric (see Corollary 8.3H of Gromov \cite{Gromov}), there exists a diverging sequence $\{\gamma_n\}_{n\in\N}$ such that
		\[\lim_{n\to\infty}\frac{\tM_{\rho,u}(\gamma_n)}{t_{\gamma_n}}=0.\]
		Suppose $\mu_{\gamma_n}$ is a $\phi$ invariant probability measure on $\flow$ which is supported on the orbit of the flow corresponding to $\gamma_n$. Then
		\[\lim_{n\to\infty}\int f_1d\mu_{\gamma_n}=0.\]
		We know that the space of $\phi$ invariant probability measures on $\flow$ is weak* compact. Hence, there exists a $\phi$ invariant probability measure $\mu$ on $\flow$ such that
		\[\int f_1d\mu=0.\]
		We consider $f^t_1(p):=\frac{1}{t}\int_0^tf(\phi_sp)ds$ for all $t>0$ and using Fubini's theorem obtain that
		\[\int f^t_1d\mu=0.\]
		As $\flow$ is connected, for all $t>0$, there exists $p_t\in\flow$ such that $f^t_1(p_t)=0$. Hence, for all $t>0$ we have $\sigma_1(\phi_tp_t)=\phi_t\sigma_1(p_t)$. It follows that for the compact set $\sigma_1(\flow)$ and $t>0$ the following holds
		\[\sigma_1(\phi_tp_t)\in\phi_t\sigma_1(\flow)\cap\sigma_1(\flow).\]
		We deduce that the $\R$ action on $\acV$ is not proper. Now using Lemma 5.2 of Goldman--Labourie--Margulis \cite{GLM} we obtain that the $\Gamma$ action on $(\cflow\times\sV)/\R=\bdry^{(2)}\times\sV$ is not proper. Hence, the action of $(\rho,u)(\Gamma)$ on $\sV$ is not proper either.
	\end{proof}
	\subsection{Deformation of free groups}\label{sec.DFG}
	
	Henceforth, we denote $\SL(n,\R)$ by $\SL_n$ for notational ease. In this subsection we assume that $\Gamma$ is a non-abelian free group. Let $\iota:\SL_2\to\SL_n$ be the irreducible representation and let $d_e\iota:\fsl_2\to\fsl_n$ be the corresponding Lie algebra homomorphism. Then any injective representation $\rho:\Gamma\to\SL_n$ is called a Fuchsian representation if and only if $\rho=\iota\circ\tau$ for some $\tau:\Gamma\to\SL_2$. Moreover, any representation $\rho:\Gamma\to\SL_n$ which can be continuously deformed to obtain a Fuchsian representation is called a Hitchin representation. We denote the space of Hitchin representations by $\hit(\Gamma,\SL_n)$. Suppose $\rho$ is a Fuchsian representation and $\sT_{[\rho]}\hit(\Gamma,\SL_n)$ is the tangent space at $[\rho]$.
	
	\begin{lemma}\label{lem.existence}
		Let $(\tau,u):\Gamma\to\SL_2\ltimes\fsl_2$ be an injective homomorphism such that $\tau(\Gamma)$ is a non-abelian free subgroup of $\SL_2$ and $(\tau,u)(\Gamma)$ acts properly on $\fsl_2$. Then the group $(\iota\circ\tau,d_e\iota \circ u)(\Gamma)$ acts properly on $\fsl_n$.
	\end{lemma}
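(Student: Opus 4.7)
The plan is to apply Theorem \ref{thm.proper} on both sides, $(\tau,u)$ acting on $\fsl_2$ and $(\iota\circ\tau,d_e\iota\circ u)$ acting on $\fsl_n$, after establishing the key identity
\[
\tM_{(\iota\circ\tau,\,d_e\iota\circ u)}(\gamma) \;=\; d_e\iota\!\bigl(\tM_{(\tau,u)}(\gamma)\bigr)
\]
for every $\gamma\in\Gamma$ with $\tau(\gamma)$ loxodromic. Since $d_e\iota:\fsl_2\to\fsl_n$ is a linear injection, this identity forces the two sequences $\{\tM_{(\tau,u)}(\gamma_n)\}$ and $\{\tM_{(\iota\circ\tau,\,d_e\iota\circ u)}(\gamma_n)\}$ to be bounded together, and Theorem \ref{thm.proper} then transports the properness property from $\fsl_2$ to $\fsl_n$.

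First I would check that Theorem \ref{thm.proper} is applicable on the $\SL_n$ side. Under the implicit (and natural) assumption that $\tau$ is itself Anosov as a representation $\Gamma\to\SL_2$, so that $(\tau,u)$ is partial affine Anosov in the sense of Proposition \ref{prop.partialaffineano}, the composition $\iota\circ\tau$ is a Hitchin representation and therefore lies in $\ano(\Gamma,\SL_n,\tAd)$ by Proposition \ref{prop.GW}, since $\iota$ is the irreducible principal embedding.

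The main step is the derivation of the identity. From Definition \ref{def.marginv}, write $\tau(\gamma)_\tth = h'\exp(\tJd_{\tau(\gamma)})h'^{-1}$ with $h'\in\SL_2$. Applying $\iota$ and using $\iota\circ\exp = \exp\circ\, d_e\iota$ gives $\iota(\tau(\gamma))_\tth = \iota(h')\exp(d_e\iota(\tJd_{\tau(\gamma)}))\iota(h')^{-1}$, so $\iota(h')$ serves as a conjugating element in Definition \ref{def.marginv} for $(\iota(\tau(\gamma)),d_e\iota(u(\gamma)))$. Combined with $d_e\iota\circ\tAd_{h'^{-1}} = \tAd_{\iota(h')^{-1}}\circ d_e\iota$, the identity reduces to the commutation
\[
\pi_0^{(n)}\circ d_e\iota \;=\; d_e\iota\circ\pi_0^{(2)}
\]
between the neutral projections $\pi_0^{(2)}:\fsl_2\to\fa_2$ and $\pi_0^{(n)}:\fsl_n\to\fa_n$. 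This commutation comes from the weight structure of $\iota$: the weights of $\iota$ on $\R^n$ restricted to $\fa_2$ form the arithmetic progression $n-1,n-3,\ldots,-(n-1)$, all distinct, so $d_e\iota$ sends $\fa_2$ into $\fa_n$ and each $\fsl_2$-root space $\fsl_2^\pm$ into a direct sum of non-zero $\fa_n$-weight spaces of $\fsl_n$; a generator $X_0\in\fa_2^{++}$ correspondingly maps to a vector in $\fa_n^{++}$ that is symmetric, generic and extreme for the adjoint of $\SL_n$, matching the setup of Remark \ref{rem.weight}.

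To finish, suppose for contradiction that $(\iota\circ\tau,d_e\iota\circ u)(\Gamma)$ does not act properly on $\fsl_n$. Theorem \ref{thm.proper} produces a diverging sequence $\{\gamma_n\}\subset\Gamma$ with $\{\tM_{(\iota\circ\tau,\,d_e\iota\circ u)}(\gamma_n)\}$ bounded; by the identity and the injectivity of $d_e\iota$, the sequence $\{\tM_{(\tau,u)}(\gamma_n)\}$ is bounded too, contradicting the hypothesized properness of $(\tau,u)(\Gamma)$ on $\fsl_2$ via Theorem \ref{thm.proper}. The only non-routine point is verifying the commutation of the Cartan projections with $d_e\iota$, which is essentially forced by the weight analysis above; everything else is formal bookkeeping with the Jordan decomposition.
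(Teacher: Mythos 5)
Your proof is correct, but it takes a genuinely different route from the paper's. You prove the equivariance $\tM_{(\iota\circ\tau,\,d_e\iota\circ u)}(\gamma)=d_e\iota\bigl(\tM_{(\tau,u)}(\gamma)\bigr)$ directly from Definition \ref{def.marginv} and the weight structure of the principal embedding (the conjugator $\iota(h')$, the inclusion $d_e\iota(\fa_2^{++})\subset\fa_n^{++}$, and $\pi_0^{(n)}\circ d_e\iota=d_e\iota\circ\pi_0^{(2)}$ — all of which check out), and then use Theorem \ref{thm.proper} as a two-way bridge between non-properness and bounded Margulis invariants on a diverging sequence. The paper instead never touches the Margulis invariants of $(\tau,u)$ directly: it realizes $u$ as the tangent of a family $\tau_t$ of convex cocompact representations, invokes Goldman--Margulis and Goldman--Labourie--Margulis to get a \emph{uniform lower bound} $\frac{d}{dt}\big|_{t=0}\tJd(\tau_t(\gamma))/\ell(\gamma)\geq c>0$, transfers this to $\SL_n$ through the explicit diagonal form of $\iota$ and a linear functional $\alpha$, identifies the result with the Margulis invariant via Proposition \ref{prop.derivative}, and concludes $0\notin\tM\text{-}\Spec(\iota\circ\tau,d_e\iota\circ u)$ before applying Proposition \ref{prop.notproper}. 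Your argument is more self-contained (no appeal to \cite{GM}, \cite{GLM}, or Proposition \ref{prop.derivative}) and proves exactly the stated lemma; the paper's argument buys the strictly stronger conclusion that the normalized Margulis spectrum avoids $0$, which is what is actually fed into Proposition \ref{prop.existence} in the proof of Proposition \ref{prop.main} — properness alone, via Theorem \ref{thm.proper}, does not rule out sequences with $\tM(\gamma_n)$ unbounded but $\tM(\gamma_n)/\ell(\gamma_n)\to0$. (Your identity would recover that stronger statement too, if combined with the GLM uniform positivity in the $\fsl_2$ case.) Finally, note that both you and the paper rely on the unstated hypothesis that $\tau$ is convex cocompact, i.e.\ Anosov; and your citation of Proposition \ref{prop.GW} for the fact that Hitchin representations lie in $\ano(\Gamma,\SL_n,\tAd)$ should really be to Labourie's theorem that Hitchin representations are Borel Anosov — a cosmetic point, not a gap.
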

	\begin{proof}
		We observe that for $k=\left\lfloor{\frac{n}{2}}\right\rfloor $ we have
		\[\iota\left(\begin{bmatrix}
			\lambda&0\\
			0&\lambda^{-1}
		\end{bmatrix}\right)=
		\begin{bmatrix}
			\lambda^k&\dots&0\\
			\vdots&\ddots&\vdots\\
			0&\dots&\lambda^{-k}
		\end{bmatrix}.\]
		Suppose $\{\tau_t(\Gamma)\}_{t\in(-1,1)}$ be an one parameter family of convex cocompact subgroups of $\SL_2$ with $\tau_0=\tau$ and whose tangent direction at $\tau$ is the cocycle $u:\Gamma\to\fsl_2$. Then the cocycle $d_e\iota\circ u:\Gamma\to\fsl_n$ is tangent to the one parameter family $\{\iota\circ\tau_t\}_{t\in(-1,1)}$. We use Goldman--Margulis \cite{GM} and Goldman--Labourie--Margulis \cite{GLM} to obtain that for some $c>0$ the following holds:
		\[\left.\frac{d}{dt}\right|_{t=0}\frac{\tJd(\tau_t(\gamma))}{\ell(\gamma)}\geq c>0.\]
		Hence, we can choose a linear functional $\alpha:\fsl_n^0\to\R$ such that
		\[\left.\frac{d}{dt}\right|_{t=0}\alpha\left(\frac{\tJd(\iota\circ\tau_t(\gamma))}{\ell(\gamma)}\right)\geq c>0.\]
		Now using Proposition \ref{prop.derivative} it follows that $0\notin\tM\text{-}\Spec(\iota\circ\tau,d_e\iota\circ u)$. Finally, using Proposition \ref{prop.notproper} we conclude our result.
	\end{proof}
	
	\begin{proposition}\label{prop.existence}
		Let $(\rho,u):\Gamma\to\SL_n\ltimes\fsl_n$ be an injective homomorphism such that $0\notin\tM\text{-}\Spec(\rho,u)$. Then there exists some open neighborhood $U\subset\sT\hit(\Gamma,\SL_n)$ containing $([\rho],[u])$ such that for all $([\varrho],[v])\in U$, the group $(\varrho,v)(\Gamma)$ act properly on $\fsl_n$.
	\end{proposition}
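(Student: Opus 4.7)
My plan is to show that the condition ``$0 \notin \tM\text{-}\Spec(\varrho,v)$'' is open in $\sT\hit(\Gamma,\SL_n)$ and then invoke the contrapositive of Proposition \ref{prop.notproper}, noting that Hitchin representations are Anosov so that proposition applies throughout a neighborhood. By Proposition \ref{prop.convex} the Margulis spectrum $\tM\text{-}\Spec(\rho,u)$ is a closed convex subset of $\sV^0 = \fsl_n^0$, and since $0$ lies outside it, finite-dimensional Hahn--Banach separation produces a linear functional $\alpha:\sV^0 \to \R$ and a constant $c > 0$ with $\alpha \geq c$ on $\tM\text{-}\Spec(\rho,u)$. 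Equivalently, $\alpha(\tM_{\rho,u}(\gamma)) \geq c\,\ell(\gamma)$ for every infinite-order $\gamma \in \Gamma$.

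Next I would use the stability of Anosov representations to obtain a neighborhood $U_1 \subset \hit(\Gamma,\SL_n)$ of $[\rho]$ on which the limit maps $\xi^\pm_\varrho$, the splittings $\cV = \cV^+ \oplus \cV^0 \oplus \cV^-$, and the exponential rates $(C,c)$ in the Anosov definition all depend continuously on $[\varrho]$. Carrying out the partition-of-unity construction of subsection \ref{sec.NS} jointly in the parameters produces a family of H\"older continuous sections $\sigma_{\varrho,v}:\flow \to \acV$ that are differentiable along the flow and depend continuously on $(\varrho,v)$; because the exponentially convergent integrals defining the neutralized section in Lemma \ref{lem.neutralized1} can be dominated uniformly over $U_1$, the neutralized sections $\sigma^0_{\varrho,v}$ and their derivatives $\nabla_\phi\sigma^0_{\varrho,v}$ also vary continuously in sup norm. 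Applying Lemma \ref{lem.margnu} one has
\begin{equation*}
\frac{\tM_{\varrho,v}(\gamma)}{\ell(\gamma)} \;=\; \frac{1}{\ell(\gamma)}\, \nu^*_\gamma \!\int_0^{\ell(\gamma)} \nabla_\phi \tilde{\sigma}^0_{\varrho,v}(\gamma_+,\gamma_-,s)\, ds,
\end{equation*}
so the sup-norm smallness of $\nabla_\phi \sigma^0_{\varrho,v} - \nabla_\phi \sigma^0_{\rho,u}$ yields the uniform-in-$\gamma$ estimate
\begin{equation*}
\sup_{\gamma \in \Gamma} \; \left\| \frac{\tM_{\varrho,v}(\gamma)}{\ell(\gamma)} - \frac{\tM_{\rho,u}(\gamma)}{\ell(\gamma)} \right\| \;\longrightarrow\; 0 \quad \text{as } (\varrho,v) \to (\rho,u).
\end{equation*}

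Shrinking the neighborhood $U \subset \sT\hit(\Gamma,\SL_n)$ around $([\rho],[u])$ so that this uniform difference is smaller than $c/(2\|\alpha\|)$, one obtains $\alpha(\tM_{\varrho,v}(\gamma))/\ell(\gamma) \geq c/2$ for every infinite-order $\gamma$ and every $([\varrho],[v]) \in U$. Taking the closure gives $\alpha \geq c/2 > 0$ on $\tM\text{-}\Spec(\varrho,v)$, hence $0 \notin \tM\text{-}\Spec(\varrho,v)$, and Proposition \ref{prop.notproper} delivers properness of $(\varrho,v)(\Gamma)$ on $\fsl_n$. The main obstacle I expect is establishing the sup-norm continuity of $\nabla_\phi \sigma^0_{\varrho,v}$ in $(\varrho,v)$: this requires that the H\"older constants of the Anosov limit maps and the exponential rates governing the convergence of the defining integrals in Lemma \ref{lem.neutralized1} can be chosen uniformly across a neighborhood of $[\rho]$, which in turn relies on the openness and robustness properties of the Anosov condition on the Hitchin component.
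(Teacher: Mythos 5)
Your proposal is correct and follows essentially the same route as the paper: separate $0$ from the closed convex set $\tM\text{-}\Spec(\rho,u)$ by a linear functional $\alpha$ with a positive margin, represent the normalized Margulis invariants as orbit averages of a family of functions $f_{\varrho,v}:\flow\to\fsl_n^0$ built from neutralized sections varying continuously (analytically, via Hirsch--Pugh--Shub as used in Bridgeman--Canary--Labourie--Sambarino) over $\sT\hit(\Gamma,\SL_n)$, and conclude with the contrapositive of Proposition \ref{prop.notproper}. The only divergence is the final perturbation step: the paper first replaces $f_{\rho,u}$ by a cohomologous pointwise-positive function using Lemmas 3 and 6 of Goldman--Labourie and then invokes compactness of $\flow$, whereas you bound the difference of orbit averages directly by $\|\alpha\|\,\|f_{\varrho,v}-f_{\rho,u}\|_\infty$ --- a slightly more elementary route to the same uniform lower bound, resting on the same (correctly flagged) key input of uniform control of the neutralized sections over a neighborhood.
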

	\begin{proof}
		As $0\notin\tM\text{-}\Spec(\rho,u)$, there exists some linear functional $\alpha:\fsl_n\to\R$ and $c>0$ such that $\alpha(\tM_{\rho,u}(\gamma))\geq c\ell(\gamma)>0$ for all $\gamma\in\Gamma$.
		
		Let $\cA_{\rho,u}:=(\rho,u)(\Gamma)\backslash(\cflow\times\fsl_n)$. We use Theorem 3.8 of Hirsch-Pugh-Shub \cite{HPS} (see Section 6 of Bridgeman--Canary--Labourie--Sambarino \cite{BCLS} for more details) to obtain that there exists a family of H\" older continuous sections 
		\[\{\sigma_{\rho,u}:\flow\to \cA_{\rho,u}\mid [\rho]\in\hit(\Gamma,\SL_n),[u]\in\sT_{[\rho]}\hit(\Gamma,\SL_n)\}\] 
		which are differentiable along flow lines such that $\sigma_{\rho,u}$ varies analytically over $\sT\hit(\Gamma,\SL_n)$. We use Lemma \ref{lem.neutralized1} to observe that we could choose $\sigma_{\rho,u}$ such that
		\[\nabla_\phi\sigma_{\rho,u}=\nabla_\phi^0\sigma_{\rho,u}.\]
		Moreover, we use Lemma \ref{lem.margnu} to deduce the existence of H\" older continuous functions $f_{\rho,u}:\flow\to\fsl_n^0$ which varies analytically over $\sT\hit(\Gamma,\SL_n)$ such that for all non torsion elements $\gamma\in\Gamma$ the following holds:
		\[\int_\gamma f_{\rho,u}=\frac{\tM_{\rho,u}(\gamma)}{t_\gamma}.\]
		We use Corollary 8.3H of Gromov \cite{Gromov} and obtain some $c_0>0$ such that for all non torsion elements $\gamma\in\Gamma$ we have 
		\[\int_\gamma \alpha(f_{\rho,u})\geq c_0 >0.\]
		We use Lemmas 3 and 6 of Goldman--Labourie \cite{GL} to obtain that we could choose $\alpha(f_{\rho,u})>0$. As $\flow$ is compact, there exists some $c_1>0$ such that $\alpha(f_{\rho,u})\geq c_1$. Also, as the dependence over $\sT\hit(\Gamma,\SL_n)$ is analytic, we deduce that for any $(\varrho,v)$ in a small neighborhood containing $(\rho,u)$ we have $\alpha(f_{\varrho,v})>0$. It follows that $0\notin\tM\text{-}\Spec(\varrho,v)$. Finally, using Proposition \ref{prop.notproper} we conclude our result.
	\end{proof}
	
	\begin{proposition}\label{prop.main}
		Suppose $\rho:\Gamma\to\SL_n$ is a Fuchsian representation. Then there exists a neighborhood $U$ of $[\rho]$ in $\hit(\Gamma,\SL_n)$ and for any $[\varrho]\in U$ there exists some non empty open set $U_{[\varrho]}\subset\sT_{[\varrho]}\hit(\Gamma,\SL_n)$ such that for any $v\in U_{[\varrho]}$, the group $(\varrho,v)(\Gamma)$ act properly on $\fsl_n$.
	\end{proposition}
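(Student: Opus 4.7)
The plan is to reduce to the $\SL_2$ setting, use Margulis' classical construction of proper affine deformations of free Fuchsian groups on $\fsl_2$, and then spread properness to a neighborhood of $[\rho]$ by invoking the stability result Proposition~\ref{prop.existence}.

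First, since $\rho$ is Fuchsian I write $\rho=\iota\circ\tau$ for some injective $\tau:\Gamma\to\SL_2$ whose image is convex cocompact (automatic for free $\Gamma$ arising from a Hitchin representation). By the classical construction of Margulis \cite{Margulis1,Margulis2}, there exists a cocycle $u_0:\Gamma\to\fsl_2$ such that $(\tau,u_0)(\Gamma)$ acts properly on $\fsl_2$. Note that $d_e\iota\circ u_0$ is automatically a cocycle with respect to $\rho=\iota\circ\tau$ since $\iota$ intertwines the adjoint actions, and it is tangent at $t=0$ to the one parameter family of Hitchin representations $\iota\circ\tau_t$, so its class lies in $\sT_{[\rho]}\hit(\Gamma,\SL_n)$.

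Second, apply Lemma~\ref{lem.existence} to $(\tau,u_0)$: the group $(\rho,\,d_e\iota\circ u_0)(\Gamma)=(\iota\circ\tau,\,d_e\iota\circ u_0)(\Gamma)$ acts properly on $\fsl_n$. A close reading of the proof of Lemma~\ref{lem.existence} in fact produces a linear functional $\alpha:\fsl_n^0\to\R$ that is uniformly positive on the normalized infinitesimal Jordan projections, hence the stronger statement
\[
0\;\notin\;\tM\text{-}\Spec(\rho,\,d_e\iota\circ u_0)
\]
is obtained. This is exactly the hypothesis of Proposition~\ref{prop.existence}.

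Third, feed this into Proposition~\ref{prop.existence}: there exists an open neighborhood $\tilde U\subset\sT\hit(\Gamma,\SL_n)$ of $([\rho],[d_e\iota\circ u_0])$ such that every group $(\varrho,v)(\Gamma)$ with $([\varrho],[v])\in\tilde U$ acts properly on $\fsl_n$. Let $U$ be the projection of $\tilde U$ to $\hit(\Gamma,\SL_n)$ under the vector-bundle map, which is open since vector-bundle projections are open maps, and for each $[\varrho]\in U$ set $U_{[\varrho]}\defeq\tilde U\cap\sT_{[\varrho]}\hit(\Gamma,\SL_n)$. Each $U_{[\varrho]}$ is open in the fiber and non-empty by definition of $U$, which gives the statement.

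The main obstacle is concentrated in Step~1: one must invoke Margulis' theorem producing a proper cocycle over the Fuchsian $\tau$, which is precisely what makes the hypothesis ``$\Gamma$ free'' essential. Everything afterwards is a mechanical transfer from $\SL_2$ to $\SL_n$ via the principal embedding $\iota$ (Lemma~\ref{lem.existence}) followed by the openness statement (Proposition~\ref{prop.existence}) developed earlier in the paper.
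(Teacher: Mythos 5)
Your proposal is correct and follows essentially the same route as the paper: write $\rho=\iota\circ\tau$, invoke Margulis to produce a proper cocycle $u_0$ over $\tau$, push it forward via Lemma \ref{lem.existence}, and conclude with the openness statement of Proposition \ref{prop.existence}. Your observation that one must extract $0\notin\tM\text{-}\Spec(\rho,d_e\iota\circ u_0)$ from the \emph{proof} of Lemma \ref{lem.existence} (rather than its statement) in order to satisfy the hypothesis of Proposition \ref{prop.existence} is a useful clarification that the paper leaves implicit.
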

	\begin{proof}
		Suppose $\rho=\iota\circ\tau$ for some free subgroup $\tau(\Gamma)$. We know from Margulis \cite{Margulis1,Margulis2} that there exists some cocycle $u:\Gamma\to\fsl_2$ such that $(\tau,u)(\Gamma)$ acts properly on $\fsl_2$. Now we use Lemma \ref{lem.existence} and Proposition \ref{prop.existence} to conclude our result.
	\end{proof}

	\subsection{Deformation of surface groups}\label{sec.DSG}
	
	In this subsection  we assume that $\Gamma$ is the fundamental group of a compact surface without boundary of genus atleast two. Let $\iota:\SL_2\to\SL_n$ be the irreducible representation. Then any injective representation $\rho:\Gamma\to\SL_n$ is called a Fuchsian representation if and only if $\rho=\iota\circ\tau$ for some $\tau:\Gamma\to\SL_2$. Moreover, any representation $\rho:\Gamma\to\SL_n$ which can be continuously deformed to obtain a Fuchsian representation is called a Hitchin representation. We denote the space of Hitchin representations by $\hit(\Gamma,\SL_n)$. Suppose $\rho$ be a Fuchsian representation and $\sT_{[\rho]}\hit(\Gamma,\SL_n)$ be the tangent space at $[\rho]$. 
	\begin{proposition}\label{prop.onedim}
		Suppose $\rho:\Gamma\to\SL_n$ is a Fuchsian representation. Then there exists subspaces $\sT_2,\dots,\sT_n$ of $\sT_{[\rho]}\hit(\Gamma,\SL_n)$ such that
		\[\sT_{[\rho]}\hit(\Gamma,\SL_n)=\bigoplus_{k=2}^n\sT_k\]
		and for any $[u_k]\in\sT_k$ there exists one dimensional subspaces $\sL_k$ of $\fsl_n^0$ spanned respectively by $X_k=(X_{k,2},\dots,X_{k,n})$ with
		\[X_{k,p}=\frac{(p-1)!(n-p)!}{2^{k-2}(n-k)!}\sum_{j=\max\{1,k+p-n\}}^{\min\{k,p\}} {n-k\choose p-j}{k-1\choose j-1}^2(-1)^{j+k+1},\]
		such that $\tM\text{-}\Spec(\rho,u_k)\subset \sL_k$.
	\end{proposition}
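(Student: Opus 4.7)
The plan is to combine the Hitchin parametrization of the tangent space with Proposition \ref{prop.derivative} and the asymptotic results of Labourie--Wentworth and Potrie--Sambarino.

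First, I would use Hitchin's theorem to identify $\sT_{[\rho]}\hit(\Gamma,\SL_n)$ with $\bigoplus_{k=2}^n H^0(\Sigma_\Gamma, K^k)$, the direct sum of spaces of holomorphic $k$-differentials on the Riemann surface $\Sigma_\Gamma$ underlying the Fuchsian representation $\rho = \iota\circ\tau$. I would then define $\sT_k$ to be the subspace of tangent vectors corresponding to $k$-differentials under this identification. The direct sum decomposition $\sT_{[\rho]}\hit(\Gamma,\SL_n) = \bigoplus_{k=2}^n \sT_k$ is immediate.

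Next, fix $[u_k]\in\sT_k$ and let $\{\rho_t\}_{t\in(-\epsilon,\epsilon)}$ be a smooth path in $\hit(\Gamma,\SL_n)$ with $\rho_0=\rho$ and tangent vector $u_k$ at $t=0$. Since Hitchin representations are loxodromic on infinite order elements, Proposition \ref{prop.derivative} gives
\[ \frac{\tM_{\rho,u_k}(\gamma)}{\ell(\gamma)} = \left.\frac{d}{dt}\right|_{t=0}\frac{\tJd(\rho_t(\gamma))}{\ell(\gamma)} \]
for every infinite order $\gamma\in\Gamma$. Hence $\tM\text{-}\Spec(\rho,u_k)$ is contained in the closure of the set of all such derivatives divided by the translation length. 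To pin this down, I would invoke the variational formula of Labourie--Wentworth \cite{LW} and, crucially, Corollary 1.4 of Potrie--Sambarino \cite{PotSam}, which asserts that when the deformation is along a pure $k$-differential, the renormalized derivative $\frac{d}{dt}|_{t=0}\tJd(\rho_t(\gamma))/\ell(\gamma)$ lies in a single line $\sL_k \subset \fsl_n^0$ independent of $\gamma$. Granting this, $\tM\text{-}\Spec(\rho,u_k)\subset\sL_k$ follows by taking closures.

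The remaining task is to verify that this distinguished line $\sL_k$ is spanned by the explicit vector $X_k$ given in the statement. This is a finite-dimensional representation-theoretic computation. Under the principal $\SL_2$ inside $\SL_n$, the Lie algebra $\fsl_n$ decomposes as $\bigoplus_{k=2}^n V_{2k-1}$, where $V_{2k-1}$ is the unique $(2k-1)$-dimensional irreducible $\SL_2$-module, and the summand $V_{2k-1}$ is precisely the one carrying the $k$-differential deformations. The Potrie--Sambarino direction $\sL_k$ is then, up to scalar, the image in the Cartan $\fsl_n^0$ of a specific highest (or zero-weight) vector inside $V_{2k-1}$ under the projection along the nilpotent part. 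Writing the inclusion $V_{2k-1}\hookrightarrow\fsl_n$ explicitly as matrix entries using Clebsch--Gordan coefficients for the tensor decomposition $\mathrm{Sym}^{n-1}\otimes\mathrm{Sym}^{n-1}$, the diagonal $(p,p)$-entry is a sum of products of binomial coefficients. The normalization $(p-1)!(n-p)!/(2^{k-2}(n-k)!)$ absorbs the combinatorial constants from the Clebsch--Gordan expansion and a standard involution, and the signed binomial sum $\sum_j \binom{n-k}{p-j}\binom{k-1}{j-1}^2 (-1)^{j+k+1}$ emerges as the resulting matrix coefficient.

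The main obstacle is the last step: matching the abstract ``one-dimensional direction'' supplied by Potrie--Sambarino with the explicit combinatorial vector $X_k$. The representation-theoretic set-up is clear in principle, but the book-keeping required to convert the natural expression (as a matrix coefficient in the principal $\fsl_2$-decomposition) into the stated diagonal entries $X_{k,p}$ is delicate, and it is here that one must be careful with normalizations, sign conventions, and the range of summation for $j$.
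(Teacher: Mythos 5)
Your skeleton matches the paper's: decompose $\sT_{[\rho]}\hit(\Gamma,\SL_n)$ into the $k$-differential summands $\sT_k$ coming from Hitchin's parametrization, use Proposition \ref{prop.derivative} to convert the Margulis invariant of the tangent cocycle into the first variation of the Jordan projection, and then appeal to a variational formula to conclude that this first variation lies in a fixed line $\sL_k$. The paper's proof is exactly this, compressed into one line: Proposition \ref{prop.derivative} together with Theorem 4.0.2 of Labourie--Wentworth \cite{LW}.

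There are, however, two genuine problems with your write-up. First, you attribute the crucial one-dimensionality statement to Corollary 1.4 of Potrie--Sambarino \cite{PotSam}. That corollary is an entropy rigidity theorem (a Hitchin representation whose entropy attains the Fuchsian value is Fuchsian); it is what the paper invokes later, in Proposition \ref{prop.zerospec}, and it says nothing about the direction of the first variation of $\tJd(\rho_t(\gamma))$ along a $k$-differential. The statement you actually need --- that $\left.\frac{d}{dt}\right|_{t=0}\tJd(\rho_t(\gamma))$ is, for every $\gamma$, a scalar multiple of one fixed vector $X_k$, together with the explicit combinatorial expression for the entries $X_{k,p}$ --- is precisely Theorem 4.0.2 of Labourie--Wentworth, which you mention only in passing as ``the variational formula.'' Second, having misplaced that input, you propose to recover the explicit vector $X_k$ by an independent Clebsch--Gordan computation in the principal $\fsl_2$-decomposition of $\fsl_n$, and you concede that this matching is not carried out. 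That computation is exactly the content of the Labourie--Wentworth theorem; as written, your argument neither completes it nor correctly cites the result that contains it, so the inclusion $\tM\text{-}\Spec(\rho,u_k)\subset\sL_k=\R X_k$ with the stated $X_{k,p}$ is not established.
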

	\begin{proof}
		We use Proposition \ref{prop.derivative} and Theorem 4.0.2 of Labourie--Wentworth \cite{LW} to obtain our result.
	\end{proof}
	\begin{remark}
		The subspaces $\sT_k$ correspond to the spaces of $k$-differentials (see Labourie--Wentworth \cite{LW} for more details).
	\end{remark}
	
	\begin{notation}
		Let $\left(\fa^{+}\right)^*$ denote the collection of all linear functionals $\alpha:\fsl_n^0\to\R$ such that $\alpha(X)\geq0$ for all $X\in\fa^{+}$.
	\end{notation}
	
	\begin{proposition}\label{prop.zerospec}
		Let $(\rho,u):\Gamma\to\SL_n\ltimes\fsl_n$ be an injective homomorphism such that $\rho$ is a Fuchsian representation and $[u]\in\sT_{[\rho]}\hit(\Gamma,\SL_n)$. Then 
		$0\in\alpha\left(\tM\text{-}\Spec(\rho,u)\right)$ for any $\alpha\in\left(\fa^{+}\right)^*$.
	\end{proposition}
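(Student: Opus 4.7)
The plan is to interpret $\alpha(\tM_{(\rho,u)}/\ell)$ as the first-order variation of $\alpha(\tJd/\ell)$ along a deforming family of Hitchin representations, apply Corollary 1.4 of Potrie--Sambarino \cite{PotSam} to extract the extremal character of the Fuchsian locus, and close the argument with convexity of the Margulis spectrum. First realize $[u]\in\sT_{[\rho]}\hit(\Gamma,\SL_n)$ as the tangent at $t=0$ of a smooth path $\{\rho_t\}_{t\in(-\epsilon,\epsilon)}\subset\hit(\Gamma,\SL_n)$ with $\rho_0=\rho$. Proposition \ref{prop.derivative} then yields, for every infinite-order $\gamma\in\Gamma$,
\[
\alpha\!\left(\frac{\tM_{(\rho,u)}(\gamma)}{\ell(\gamma)}\right)=\frac{d}{dt}\bigg|_{t=0}\alpha\!\left(\frac{\tJd(\rho_t(\gamma))}{\ell(\gamma)}\right),
\]
while the hypothesis $\rho=\iota\circ\tau$ implies that $\tJd(\rho(\gamma))/\ell(\gamma)=H_0$ is a single vector of $\fa^+$ independent of $\gamma\in\Gamma\setminus\{e\}$.

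Next I invoke Corollary 1.4 of Potrie--Sambarino \cite{PotSam} along the path $\{\rho_t\}$. Their result identifies the Fuchsian representation as an extremum, among Hitchin representations, of the $\alpha$-entropy, equivalently of $\sup_{\gamma\in\Gamma\setminus\{e\}}\alpha(\tJd(\rho(\gamma))/\ell(\gamma))$, for $\alpha$ in the admissible subset of $(\fa^+)^*$. In particular, for each small $t\neq 0$ there exists $\gamma_t\in\Gamma\setminus\{e\}$ with $\alpha(\tJd(\rho_t(\gamma_t))/\ell(\gamma_t))\geq\alpha(H_0)$. Taylor-expanding in $t$ gives $t\cdot\alpha(\tM_{(\rho,u)}(\gamma_t)/\ell(\gamma_t))\geq -O(t^2)$. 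Choosing $t_n\to 0^+$ produces a sequence $\{\eta_n:=\gamma_{t_n}\}$ with $\alpha(\tM(\eta_n)/\ell(\eta_n))\geq -O(t_n)$; since the Margulis spectrum is bounded (it is contained in the closure of $\{\tM(\gamma)/\ell(\gamma)\}$, which is relatively compact by the Anosov property of $\rho$), we pass to a convergent subsequence and obtain an accumulation point of $\alpha(\tM\text{-}\Spec(\rho,u))$ in $[0,\infty)$. Taking instead $t_n\to 0^-$ reverses the inequality upon division by a negative quantity, producing an accumulation point in $\alpha(\tM\text{-}\Spec(\rho,u))\cap(-\infty,0]$.

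Finally, by Proposition \ref{prop.convex} the set $\tM\text{-}\Spec(\rho,u)$ is convex, so its $\alpha$-image is a closed interval in $\R$. Since we have exhibited points of this interval of both signs (or the boundary value $0$), it must contain $0$, proving $0\in\alpha(\tM\text{-}\Spec(\rho,u))$.

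The main obstacle lies in faithfully invoking Corollary 1.4 of Potrie--Sambarino for every $\alpha\in(\fa^+)^*$: as flagged in the Introduction, the admissibility restrictions of that corollary are precisely what prevent extending the eventual improper-action result from $k$-differential directions (where Proposition \ref{prop.onedim} pins the Margulis spectrum to a line, so that the conclusion of this proposition upgrades to $0\in\tM\text{-}\Spec(\rho,u)$) to arbitrary tangent vectors $[u]\in\sT_{[\rho]}\hit(\Gamma,\SL_n)$.
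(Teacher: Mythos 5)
Your proposal assembles the same three ingredients as the paper's proof --- Proposition \ref{prop.derivative} to read $\alpha(\tM_{(\rho,u)}(\gamma)/\ell(\gamma))$ as the first variation of $\alpha(\tJd(\rho_t(\gamma))/\ell(\gamma))$, Corollary 1.4 of Potrie--Sambarino, and convexity of the spectrum --- but reorganized as a direct two-sided argument instead of the paper's one-sided contradiction. The pivot of your version, namely that for each small $t\neq0$ there exists $\gamma_t$ with $\alpha(\tJd(\rho_t(\gamma_t))/\ell(\gamma_t))\geq\alpha(H_0)$, is attributed to Corollary 1.4 via the assertion that the entropy extremum is ``equivalently'' an extremum of $\sup_\gamma\alpha(\tJd/\ell)$. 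That equivalence is not what the corollary says: it is a rigidity statement about the growth rate $\lim_{s\to\infty}\frac1s\log|\{\gamma:\alpha(\tJd_{\rho_t}(\gamma))\leq s\}|$. To extract your claim you must argue by contraposition: if every ratio were $<\alpha(H_0)$ the counting sets would nest, giving $h_\alpha(\rho)\leq h_\alpha(\rho_t)$, hence equality, hence (by the rigidity part) $\rho_t$ Fuchsian; and you would then still need a separate argument in the case that $\rho_t$ is Fuchsian but represents a different point of Teichm\"uller space, where entropy gives nothing and one needs either Thurston's result that two distinct hyperbolic structures cannot have all marked lengths strictly ordered, or the Goldman--Margulis computation the paper invokes at the corresponding point. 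Unwound, this justification is precisely the paper's contradiction argument, and the degenerate Fuchsian case is currently unaddressed in your write-up.

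The second gap is the uniformity in $\gamma$ of your remainder term. You expand $\alpha(\tJd(\rho_t(\gamma))/\ell(\gamma))=\alpha(H_0)+t\,\alpha(\tM_{(\rho,u)}(\gamma)/\ell(\gamma))+O(t^2)$ and then evaluate at $\gamma=\gamma_t$, which varies with $t$ and has $\ell(\gamma_t)\to\infty$; Proposition \ref{prop.derivative} only gives the expansion for each fixed $\gamma$, with a remainder that a priori depends on $\gamma$. The paper secures uniformity by writing $\tJd_{\rho_t}(\gamma)/\ell(\gamma)$ as the period $\int_\gamma f_t$ of an analytic family of H\"older functions on the compact space $\flow$ (Bridgeman--Canary--Labourie--Sambarino, together with the Goldman--Labourie lemmas), so that the second-order term is bounded by $\sup_{\flow}|\alpha(g_t)|$ independently of $\gamma$. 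Without this input your $O(t^2)$ is unjustified. Once both points are repaired, the closing step via Proposition \ref{prop.convex} (the $\alpha$-image of the spectrum is a compact interval meeting both $[0,\infty)$ and $(-\infty,0]$) is fine, as is your closing remark about the admissibility restrictions on $\alpha$ in Potrie--Sambarino, which the paper shares.
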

	\begin{proof}
		If possible we assume that $0\notin\alpha\left(\tM\text{-}\Spec(\rho,u)\right)$. We use Proposition \ref{prop.convex} to deduce that either $\alpha\left(\tM\text{-}\Spec(\rho,u)\right)>0$ or $\alpha\left(\tM\text{-}\Spec(\rho,u)\right)<0$. Hence, we observe that up to a choice between $\pm u$, there exists some $c>0$ such that for all non torsion $\gamma\in\Gamma$ we have 
		\[\alpha(\tM_{(\rho,u)}(\gamma))\leq -c\ell(\gamma).\] 
		Suppose $\{\rho_s\mid\rho_s:\Gamma\to\SL_n, s\in(-1,1)\}$ is an analytic one parameter family of Hitchin representations whose tangent at $\rho=\rho_0$ is given by the cocycle $u:\Gamma\to\fsl_n$. We use results from Section 6.3 of Bridgeman--Canary--Labourie--Sambarino \cite{BCLS} to obtain an analytic family of functions
		$f_s:\flow\to\fsl_n^0$ such that
		\[\int_\gamma f_s=\frac{\tJd_{\rho_t}(\gamma)}{t_\gamma}.\]
		Hence, $\alpha(f_s)$ also varies analytically. It follows that for some $g_s:\flow\to\fsl_n^0$,
		\[\alpha(f_s)=\alpha(f_0)+s\alpha(f^\prime_0)+\frac{s^2}{2!}\alpha(g_s).\]
		Now using Proposition \ref{prop.derivative} and Corollary 8.3H of Gromov \cite{Gromov} we obtain some $c_0>0$ such that for all $\gamma\in\Gamma$ the following holds:
		\[\int_\gamma\alpha(f^\prime_0)=\alpha\left(\frac{\tM_{\rho,u}(\gamma)}{t_\gamma}\right)\leq -c_0<0.\]
		We use Lemmas 3 and 6 of Goldman--Labourie \cite{GL} to observe that we can choose $f_s$ such that $\alpha(f^\prime_0)<0$. As $\flow$ is compact, there exists some $c_1>0$ and $b_s>0$ such that $\alpha(f^\prime_0)\leq -c_1$ and $\alpha(g_s)\leq b_s$. Therefore, for small enough $s>0$, there exists $c_2>0$ such that
		\[\alpha(f_s)\leq\alpha(f_0)-sc_2\leq\alpha(f_0).\]
		Hence, for all $\gamma\in\Gamma$ we have $\alpha(\tJd_{\rho_s}(\gamma))\leq\alpha(\tJd_{\rho}(\gamma))$ and we deduce that
		\[\lim_{t\to\infty}\frac{1}{t}\log|\{\gamma\mid\alpha(\tJd_{\rho}(\gamma))\leq t\}|\leq\lim_{t\to\infty}\frac{1}{t}\log|\{\gamma\mid\alpha(\tJd_{\rho_s}(\gamma))\leq t\}|.\]
		Now using Corollary 1.4 of Potrie--Sambarino \cite{PotSam} we deduce that $\rho_s$ is a Fuchsian representation for all small enough $s>0$. Finally, we use results from Goldman--Margulis \cite{GM} to obtain that $0\in\tM\text{-}\Spec(\rho,u)$. Hence,  $0\in\alpha\left(\tM\text{-}\Spec(\rho,u)\right)$, a contradiction to our initial assumption. Therefore, our result follows.    
	\end{proof}

	\begin{proposition}\label{prop.hitzerospec}
		Suppose $(\rho,u_k):\Gamma\to\SL_n\ltimes\fsl_n$ is an injective homomorphism such that $\rho$ is a Fuchsian representation and $[u_k]\in\sT_k$. Then $0\in\tM\text{-}\Spec(\rho,u_k)$.
	\end{proposition}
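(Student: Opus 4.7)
The plan is to deduce this proposition by combining the three structural results already established about the Margulis invariant spectrum: Proposition \ref{prop.onedim}, which constrains the spectrum to lie in a one-dimensional line $\sL_k=\R X_k$ with $X_k\neq 0$; Proposition \ref{prop.convex}, which says the spectrum is convex; and Proposition \ref{prop.zerospec}, which gives that $\alpha(\tM\text{-}\Spec(\rho,u_k))\ni 0$ for every $\alpha\in(\fa^{+})^*$. Together, these force the spectrum itself to contain $0$.

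First I would combine Propositions \ref{prop.onedim} and \ref{prop.convex} to observe that $\tM\text{-}\Spec(\rho,u_k)$ is a convex subset of the line $\sL_k$, hence an interval of the form $[a,b]\cdot X_k$ with $a\leq b$ in $\R\cup\{\pm\infty\}$. Arguing by contradiction, I suppose that $0\notin\tM\text{-}\Spec(\rho,u_k)$, so that either $a>0$ or $b<0$.

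Next I would exploit the fact that $\sG=\SL_n$ is split, so $\fa=\fsl_n^0$ and the closed Weyl chamber $\fa^{+}$ is a pointed convex cone with nonempty interior in $\fa$. Consequently the dual cone $(\fa^{+})^*$ spans the full dual space $\fa^*$, and in particular, since $X_k\neq 0$, one can select $\alpha\in(\fa^{+})^*$ with $\alpha(X_k)\neq 0$. For such an $\alpha$,
\[
\alpha(\tM\text{-}\Spec(\rho,u_k))\;=\;\alpha(X_k)\cdot[a,b],
\]
an interval lying strictly on one side of $0$ under the contradiction hypothesis. This directly contradicts Proposition \ref{prop.zerospec}, and therefore $0\in\tM\text{-}\Spec(\rho,u_k)$.

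The substantive work has already been done in proving Proposition \ref{prop.zerospec} (via the Potrie--Sambarino rigidity of Fuchsian representations) and in the Labourie--Wentworth one-dimensionality statement underlying Proposition \ref{prop.onedim}; so the only genuine step here is the cone-duality argument that some $\alpha\in(\fa^{+})^*$ does not annihilate $X_k$. The one possible subtlety is to make sure one is not trapped in the degenerate case $\alpha(X_k)=0$ for every admissible $\alpha$, but this is ruled out immediately by pointedness of $\fa^{+}$ together with $X_k\neq 0$.
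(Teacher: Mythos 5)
Your proposal is correct and follows essentially the same route as the paper: restrict to the line $\sL_k$ via Proposition \ref{prop.onedim}, pick $\alpha\in\left(\fa^{+}\right)^*$ whose kernel is transverse to $\sL_k$ (the paper justifies existence by noting $\left(\fa^{+}\right)^*$ has nonempty interior, which is your cone-duality point), and conclude from Proposition \ref{prop.zerospec}. Your additional appeal to convexity of the spectrum is harmless but unnecessary, since containment in a line already makes $0\in\alpha\left(\tM\text{-}\Spec(\rho,u_k)\right)$ equivalent to $0\in\tM\text{-}\Spec(\rho,u_k)$ for such $\alpha$.
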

	\begin{proof}
		We recall from Proposition \ref{prop.onedim} that $\tM\text{-}\Spec(\rho,u_k)\subset \sL_k$ and $\sL_k$ is a one dimensional vector subspace. As $\left(\fa^+\right)^*$ has non-empty interior, we can choose $\alpha\in\left(\fa^+\right)^*$ such that the kernel of $\alpha$ is transverse to $\sL_k$. It follows that $0\in\tM\text{-}\Spec(\rho,u_k)$ if and only if $0\in\alpha\left(\tM\text{-}\Spec(\rho,u_k)\right)$. Finally, using Proposition \ref{prop.zerospec} we conclude our result.
	\end{proof}
	
	\begin{proposition}
		Suppose $(\rho,u):\Gamma\to\SL_n\ltimes\fsl_n$ is an injective homomorphism such that $\rho$ is a Fuchsian representation and $[u_k]\in\sT_k$. Then $(\rho,u_k)(\Gamma)$ does not act properly on $\fsl_n$.
	\end{proposition}
	\begin{proof}
		The result follows from Propositions \ref{prop.proper} and \ref{prop.hitzerospec}.
	\end{proof}
	
	\begin{remark}
		The above proposition can also be proved more directly by considering the complete reducibility of $\fsl_2$ representations and Theorem 1.1 of Labourie \cite{Labourie2}. We thank the anonymous referee for bringing this to our notice.
	\end{remark}
	
	\begin{notation}
		We use the following notation:
		\[\sT_{odd}:=\bigoplus_{m=1}^{\lfloor \frac{n-1}{2}\rfloor}\sT_{2m+1} \text{ and } \sT_{even}:=\bigoplus_{m=1}^{\lfloor \frac{n}{2}\rfloor}\sT_{2m}.\]
	\end{notation}
	
	\begin{proposition}
		Suppose $(\rho,u):\Gamma\to\SL_n\ltimes\fsl_n$ is an injective homomorphism such that $\rho$ is a Fuchsian representation and $u\in\sT_{odd}$. Then $(\rho,u)(\Gamma)$ does not act properly on $\fsl_n$.
	\end{proposition}
	\begin{proof}
		As $u$ corresponds to an odd differential, using Equation (5), Propositions 2.1.1 and 4.0.2 of Labourie--Wentworth \cite{LW} we obtain that 
		\[\tM_{\rho,u}\left(\gamma^{-1}\right)=-\tM_{\rho,u}(\gamma)\] for all $\gamma\in\Gamma$. Hence, for any $\gamma,\eta\in\Gamma$ we have
		\[\tM_{\rho,u}\left(\eta\gamma^n\eta^{-1}\right)+\tM_{\rho,u}\left(\gamma^{-n}\right)=0.\]            
		We choose the elements $\gamma$ and $\eta$ from $\Gamma$ such that they are of infinite order and transverse. It follows that, we have $\{\eta\gamma^n\eta^{-1}\gamma^{-n}\}_{n=0}^\infty$ is a diverging sequence. Moreover, using Proposition \ref{prop.crossratio} we observe that 
		\begin{align*}
			\beta(\eta\gamma_+,\gamma_-,\eta\gamma_-,\gamma_+)&= \lim_{n\to\infty}[\tM_{\rho,u}\left(\eta\gamma^n\eta^{-1}\gamma^{-n}\right)-\tM_{\rho,u}\left(\eta\gamma^n\eta^{-1}\right)-\tM_{\rho,u}\left(\gamma^{-n}\right)]\\ &=\lim_{n\to\infty}\tM_{\rho,u}\left(\eta\gamma^n\eta^{-1}\gamma^{-n}\right).
		\end{align*}
		Finally, we conclude our result using Theorem \ref{thm.proper}.
	\end{proof}
	\begin{theorem}\label{thm.main}
		Suppose $(\rho,u):\Gamma\to\SL_n\ltimes\fsl_n$ is an injective homomorphism such that $\rho$ is a Fuchsian representation and $u\in\sT_{odd}\oplus\sT_{2m}$ for some $1\leq m\leq \lfloor \frac{n}{2}\rfloor$. Then $(\rho,u)(\Gamma)$ does not act properly on $\fsl_n$.
	\end{theorem}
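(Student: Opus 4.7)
The plan is to observe that this theorem is essentially an immediate synthesis of three of the preceding propositions, with no further analytic work required. The statement asks for non-properness of the affine action of $(\rho,u_k)(\Gamma)$ on $\fsl_n$ under the assumption that $\rho$ is Fuchsian and the infinitesimal deformation $[u_k]$ lies in the $k$-differential summand $\sT_k$ of $\sT_{[\rho]}\hit(\Gamma,\SL_n)$.

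First, I would invoke Proposition \ref{prop.onedim}, which (via Labourie--Wentworth) tells us that because $[u_k]\in\sT_k$, the entire Margulis invariant spectrum is constrained to lie in a single one-dimensional line: $\tM\text{-}\Spec(\rho,u_k)\subset\sL_k$, with $\sL_k$ spanned by the explicit vector $X_k\in\fsl_n^0$. Next, I would invoke Proposition \ref{prop.hitzerospec}, which packages the previous results (Proposition \ref{prop.zerospec} combined with the one-dimensionality of $\sL_k$) to give $0\in\tM\text{-}\Spec(\rho,u_k)$. These two conclusions are precisely the hypotheses of Proposition \ref{prop.proper} with $\sL=\sL_k$; applying that proposition then yields non-properness of the $(\rho,u_k)(\Gamma)$-action on $\fsl_n$.

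There is no hard step remaining at this level: all the substantive obstacles have been overcome in the earlier propositions. Specifically, the genuinely difficult ingredients are (i) the explicit Labourie--Wentworth decomposition used in Proposition \ref{prop.onedim}, which relates infinitesimal Jordan projections along $k$-differentials to an explicit one-dimensional subspace; (ii) the application of Potrie--Sambarino's counting Corollary 1.4 inside Proposition \ref{prop.zerospec}, which is what forces $0$ to lie in $\alpha(\tM\text{-}\Spec(\rho,u_k))$ for every $\alpha\in(\fa^+)^*$; and (iii) the Livšic-theorem-based reduction in Proposition \ref{prop.proper}, which upgrades a one-dimensional spectrum meeting $0$ into a failure of properness via the Goldman--Labourie--Margulis quotient criterion. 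Thus the theorem itself is a one-line deduction, and the proof I would write is simply: apply Proposition \ref{prop.onedim} to obtain $\tM\text{-}\Spec(\rho,u_k)\subset\sL_k$ with $\dim\sL_k=1$, apply Proposition \ref{prop.hitzerospec} to obtain $0\in\tM\text{-}\Spec(\rho,u_k)$, then invoke Proposition \ref{prop.proper} to conclude.
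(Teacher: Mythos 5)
Your proposal matches the paper's proof exactly: the paper deduces the theorem from Propositions \ref{prop.proper} and \ref{prop.hitzerospec}, with the one-dimensionality of $\sL_k$ from Proposition \ref{prop.onedim} supplying the hypothesis of Proposition \ref{prop.proper} just as you describe. Your writeup is simply a slightly more explicit version of the same one-line deduction.
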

	\begin{proof}
		Suppose $u=u_{odd}+u_{2m}$ with $u_{odd}\in\sT_{odd}$ and $u_{2m}\in\sT_{2m}$. 
		
		On one hand, as $(\rho,u_{2m})(\Gamma)$ does not act properly on $\fsl_n$, using Theorem \ref{thm.proper} we obtain a diverging sequence $\{\gamma_n\}_{n=0}^\infty$ such that $\{\tM_{\rho,u_{2m}}(\gamma_n)\}_{n=0}^\infty$ stays bounded. Suppose the attracting fixed points of $\gamma_n$ on $\bdry$ converge to $a\in\bdry$ and the repelling fixed points of $\gamma_n$ on $\bdry$ converge to $b\in\bdry$. Clearly, $a\neq b$. We choose an element $\eta\in\Gamma$ whose axis is transverse to the axis between $a$ and $b$. Hence, $\{\eta\gamma_n\eta^{-1}\gamma_n^{-1}\}_{n=0}^\infty$ is a diverging sequence and by Proposition 9.3 of Smilga \cite{Smilga4} we have
		\[\lim_{n\to\infty} \|\tM_{\rho,u_{2m}}\left(\eta\gamma_n\eta^{-1}\gamma_n^{-1}\right)-\tM_{\rho,u_{2m}}\left(\eta\gamma_n\eta^{-1}\right)-\tM_{\rho,u_{2m}}\left(\gamma_n^{-1}\right)\|<\infty.\]
		We observe that for all $\gamma\in\Gamma$,
		\[\tM_{\rho,u_{2m}}(\gamma^{-1})=\tM_{\rho,u_{2m}}(\gamma)=\tM_{\rho,u_{2m}}(\eta\gamma\eta^{-1}),\]  
		and the sequence $\{\tM_{\rho,u_{2m}}(\gamma_n)\}_{n=0}^\infty$ stays bounded. Therefore, we deduce that $\{\tM_{\rho,u_{2m}}\left(\eta\gamma_n\eta^{-1}\gamma_n^{-1}\right)\}_{n=0}^\infty$ also stays bounded.
		
		On the other hand, as $\{\eta\gamma_n\eta^{-1}\gamma_n^{-1}\}_{n=0}^\infty$ is a diverging sequence, we again use Proposition 9.3 of Smilga \cite{Smilga4} to obtain that
		\[\lim_{n\to\infty} \|\tM_{\rho,u_{odd}}\left(\eta\gamma_n\eta^{-1}\gamma_n^{-1}\right)-\tM_{\rho,u_{odd}}\left(\eta\gamma_n\eta^{-1}\right)-\tM_{\rho,u_{odd}}\left(\gamma_n^{-1}\right)\|<\infty.\]
		This time, we observe that for all $\gamma\in\Gamma$,
		\[\tM_{\rho,u_{odd}}(\eta\gamma\eta^{-1}) + \tM_{\rho,u_{odd}}(\gamma^{-1})=0.\] 
		Therefore, it follows that
		$\{\tM_{\rho,u_{odd}}\left(\eta\gamma_n\eta^{-1}\gamma_n^{-1}\right)\}_{n=0}^\infty$ stays bounded too.
		
		Finally, as $\tM_{\rho,u}(\gamma)=\tM_{\rho,u_{odd}}(\gamma)+\tM_{\rho,u_{2m}}(\gamma)$ for all $\gamma\in\Gamma$, we conclude that the sequence $\{\tM_{\rho,u}\left(\eta\gamma_n\eta^{-1}\gamma_n^{-1}\right)\}_{n=0}^\infty$ stays bounded and by Theorem \ref{thm.proper} our result follows.
	\end{proof}

	\bibliography{Library.bib}
	\bibliographystyle{alpha}
	
\end{document}